\author{Yiqiao Zhong\\ORFE\\Princeton University \and Nicolas Boumal\\Mathematics Department and PACM\\Princeton University}
\newcommand{\Arg}{\mathrm{Arg}}
\newcommand{\1}{\mathbf{1}}
\newcommand{\calN}{\mathcal{N}}
\newcommand{\calT}{\mathcal{T}}
\newcommand{\etheta}{e^{i \theta}}
\newcommand{\mtheta}{\min_{\theta}}
\newcommand{\ntikzmark}[2]{#2\thinspace\tikz[overlay,remember picture,baseline=(#1.base)]{\node[inner sep=0pt] (#1) {};}}
\newcommand{\makebrace}[3]{%
    \begin{tikzpicture}[overlay, remember picture]
        \draw [decoration={brace,amplitude=0.5em},decorate]
        let \p1=(#1), \p2=(#2) in
        ({max(\x1,\x2)}, {\y1+0.8em}) -- node[right=0.6em] {#3} ({max(\x1,\x2)}, {\y2});
    \end{tikzpicture}
}
\renewcommand{\Re}{\operatorname{Re}}
\renewcommand{\Im}{\operatorname{Im}}
\title{Near-optimal bounds for phase synchronization}
\date{\today}
\begin{document}

\sloppy

\maketitle

\begin{abstract}
The problem of phase synchronization is to estimate the phases (angles) of a complex unit-modulus vector $z$ from their noisy pairwise relative measurements $C = zz^* + \sigma W$, where $W$ is a complex-valued Gaussian random matrix. The maximum likelihood estimator (MLE) is a solution to a unit-modulus constrained quadratic programming problem, which is nonconvex. Existing works have proposed polynomial-time algorithms such as a semidefinite relaxation (SDP) approach  or the generalized power method (GPM) to solve it. Numerical experiments suggest both of these methods succeed with high probability for $\sigma$ up to $\tilde{\mathcal{O}}(n^{1/2})$, yet, existing analyses only confirm this observation for $\sigma$ up to $\mathcal{O}(n^{1/4})$. In this paper, we bridge the gap, by proving SDP is tight for $\sigma = \mathcal{O}(\sqrt{n /\log n})$, and GPM converges to the global optimum under the same regime. Moreover, we establish a linear convergence rate for GPM, and derive a tighter $\ell_\infty$ bound for the MLE. A novel technique we develop in this paper is to track (theoretically) $n$ closely related sequences of iterates, in addition to the sequence of iterates GPM actually produces. As a by-product, we obtain an $\ell_\infty$ perturbation bound for leading eigenvectors. 
Our result also confirms intuitions that use techniques from statistical mechanics.
\end{abstract}

\textbf{Keywords:} angular synchronization, nonconvex optimization, semidefinite relaxation, power method, maximum likelihood estimator, eigenvector perturbation bound.

% 90C26 Nonconvex programming, global optimization
% 90C30 Nonlinear programming
% 90C46 Optimality conditions, duality

\section{Introduction}\label{sec:intro}

Phase synchronization is the problem of estimating $n$ angles $\theta_1, \ldots, \theta_n$ in $[0, 2\pi)$ based on noisy measurements of their differences $\theta_k - \theta_\ell \textrm{ mod } 2\pi$. This is equivalent to estimating $n$ phases $e^{i\theta_1}, \ldots, e^{i\theta_n}$ from measurements of relative phases $e^{i(\theta_k - \theta_\ell)}$.

A typical noise model for this estimation problem is as follows. The target parameter (the signal) is the vector $z\in\Cn$ with entries $z_k = e^{i\theta_k}$. The measurements are stored in a matrix $C \in \Cnn$ such that, for $k < \ell$,
\begin{align}
%	C_{k\ell} & = \begin{cases}
%	z_k \bar z_\ell + \sigma W_{k\ell} & \textrm{ if } k < \ell,\\
%	\bar C_{\ell k} & \textrm{ if } k > \ell, \textrm{ and } \\
%	1 & \textrm{ if } k = \ell,
%	\end{cases}
	C_{k\ell} & = z_k \bar z_\ell + \sigma W_{k\ell},
	\label{eq:Ckl}
\end{align}
where $\sigma \geq 0$ is the noise level and $\{W_{k\ell}\}_{k < \ell}$ are independent standard complex Gaussian variables. Under this model, defining $W_{kk} = 0$ and $W_{\ell k} = \overline{W_{k\ell}}$ for consistency, the model is compactly written in matrix notation as
\begin{align}
	C & = zz^* + \sigma W,
	\label{eq:C}
\end{align}
where both $C$ and $W$ are Hermitian. An easy derivation\footnote{Since $W$ is Gaussian, an MLE minimizes the squared Frobenius norm: $\|C-xx^*\|_F^2 = \|C\|_F^2 + \|xx^*\|_F^2 - 2 x^*Cx$. Owing to $|x_k| = 1 \forall k$, this is equivalent to maximizing $x^* Cx$.} shows that a maximum likelihood estimator (MLE) $\hat x \in \Cn$ for the signal $z$ is a global optimum of the following quadratically constrained quadratic program (we define $[n] = \{1, \ldots, n\}$):
\begin{align}
	\max_{x\in\Cn} \ \ x^* C x \ \textrm{ subject to } |x_k| = 1 \textrm{ for } k \in [n].
	\tag{P}
	\label{eq:P}
\end{align}
%
%\TODO{The main theorems should come here already, before the discussion of why it is an improvement over existing literature; easy to change. Write down the SDP relaxation explicitly here, and we have enough introduction to give the main theorems;}
%
Problem~\eqref{eq:P} is non-convex and hard in general (\cite[Prop.~3.5]{zhang2006complex}). Yet, numerical experiments in~\citep{bandeira2014open,bandeira2014tightness} suggest that, provided $\sigma = \tilde{\mathcal{O}}(\sqrt{n})$,\footnote{The notation $\tilde{\mathcal{O}}$ suppresses potential log factors.} the following convex semidefinite relaxation for~\eqref{eq:P} admits $\hat x \hat x^*$ as its unique global optimum with high probability (more generally, if the problem below admits a solution of rank 1, $\hat x \hat x^*$, then the relaxation is said to be tight and $\hat x$ is an optimum of~\eqref{eq:P}):
%is a relaxation of~\eqref{eq:P} in the following sense. For any feasible $x\in\Cn$, the corresponding matrix $X = xx^*$ is feasible for~\eqref{eq:SDP}. Likewise, any feasible matrix $X$ of rank~1 can be factored as $X = xx^*$ such that $x$ is feasible for~\eqref{eq:P}. Thus, the relaxation consists in allowing solutions of rank more than 1 in~\eqref{eq:SDP}. Consequently, if~\eqref{eq:SDP} admits a solution of rank~1, $X = \hat x \hat x^*$, then the corresponding $\hat x$ is a global optimum for~\eqref{eq:P}. Furthermore, if the rank-1 solution of~\eqref{eq:SDP} is unique, it can be recovered in polynomial time
%For this reason, the regime of interest is one where~\eqref{eq:SDP} admits a unique solution of rank~1. It has been established
\begin{align}
	\max_{X \in \Cnn, X = X^*} \ \trace(CX) \ \textrm{  subject to } \diag(X) = \1, X \succeq 0.
\tag{SDP}
\label{eq:SDP}
\end{align}
In this paper, we give a rigorous proof for this observation, improving on the previous best result which only handles $\sigma = \mathcal{O}(n^{1/4})$~\citep{bandeira2014tightness}. Our result also provides some justification for the analytical prediction in \cite{javanmard2016phase} on optimality of the semidefinite relaxation approach.\footnote{To be precise, in \cite{javanmard2016phase}, it is predicted---but not proved---via statistical mechanics arguments that the SDP relaxation is nearly optimal when $\sigma = \mathcal{O}(\sqrt{n})$. Instead of showing a solution of (\ref{eq:SDP}) has rank one, a rescaled leading eigenvector of its solution is used as an estimator.} 
\begin{theorem}\label{thm:mainSDP}
	If $\sigma = \mathcal{O}\left(\sqrt{\frac{n}{\log n}}\right)$, with high probability for large $n$, the semidefinite program~\eqref{eq:SDP} admits a unique solution $\hat x \hat x^*$, where $\hat x$ is a global optimum of~\eqref{eq:P} (unique up to phase.)
\end{theorem}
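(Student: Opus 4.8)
The plan is to construct a dual certificate for~\eqref{eq:SDP} and verify the KKT conditions. Since $X = I$ is strictly feasible, strong duality holds, and the dual program reads $\min_{y \in \Rn} \1\transpose y$ subject to $\ddiag(y) \succeq C$. Let $\hat x$ be a global optimum of~\eqref{eq:P}; as~\eqref{eq:P} maximizes a continuous function over the compact torus $(\So)^n$, such $\hat x$ exists and is a stationary point, so $C\hat x = \ddiag(\lambda)\hat x$ for the real vector $\lambda$ with $\lambda_k = \Re(\bar{\hat x}_k (C\hat x)_k)$. Take $S := \ddiag(\lambda) - C$ as candidate certificate. By construction $S\hat x = 0$, so $\Trace(S\hat x\hat x^*) = 0$; also $\1\transpose\lambda = \sum_k \Re(\bar{\hat x}_k(C\hat x)_k) = \hat x^* C\hat x$ equals the primal value, so complementary slackness holds. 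Hence, if $S \succeq 0$, then $\lambda$ is dual feasible and $\hat x\hat x^*$ is an optimal solution of~\eqref{eq:SDP}; and if in addition $S$ has rank exactly $n - 1$, then any optimal $X^\star$ obeys $\Trace(SX^\star) = 0$ with $S, X^\star \succeq 0$, forcing $\col(X^\star) \subseteq \nulll(S) = \spann\{\hat x\}$ and, with $\diag(X^\star) = \1$, $X^\star = \hat x\hat x^*$; this also makes $\hat x$ unique up to phase. So everything reduces to showing that, with high probability, $w^* S w > 0$ for every nonzero $w$ orthogonal to $\hat x$ (recall $\hat x^\perp$ is $S$-invariant since $S$ is Hermitian and $S\hat x = 0$).

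Fix such a $w$ with $\|w\| = 1$. Since the signal is defined only up to a global phase, choose the representative $z$ for which $\rho := z^* \hat x \geq 0$ is real, and set $d := \|\hat x - z\|$, so $\rho = n - d^2/2$. Expanding,
\begin{align*}
	w^* S w = \sum_{k=1}^n \lambda_k |w_k|^2 - |z^* w|^2 - \sigma\, w^* W w .
\end{align*}
The two negative terms are benign: decomposing $z$ along $\hat x$ and using $w \perp \hat x$ gives $|z^* w|^2 \leq n - \rho^2/n = d^2 - d^4/(4n) \leq d^2$, while $\sigma |w^* W w| \leq \sigma \opnorm{W}$. For the diagonal term, using that $\rho$ is real and $\Re(\bar{\hat x}_k z_k) = 1 - \tfrac12|\hat x_k - z_k|^2$, we get $\lambda_k = \rho(1 - \tfrac12|\hat x_k - z_k|^2) + \sigma\Re(\bar{\hat x}_k (W\hat x)_k) \geq \rho\big(1 - \tfrac12\|\hat x - z\|_\infty^2\big) - \sigma\|W\hat x\|_\infty$, so $\sum_k \lambda_k |w_k|^2 \geq \rho(1 - \tfrac12\|\hat x - z\|_\infty^2) - \sigma\|W\hat x\|_\infty$. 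Altogether,
\begin{align*}
	w^* S w \geq \rho - \tfrac{\rho}{2}\|\hat x - z\|_\infty^2 - \sigma\|W\hat x\|_\infty - d^2 - \sigma\opnorm{W} .
\end{align*}

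It then remains to plug in four high-probability estimates: the spectral bound $\opnorm{W} = \mathcal{O}(\sqrt{n})$; an $\ell_2$ bound $d = \mathcal{O}(\sigma)$ for the MLE (from an eigenvector perturbation bound for $C$ together with the fact that rounding the leading eigenvector of $C$ onto the torus yields a feasible point no better than $\hat x$), which gives $\rho = n - \mathcal{O}(\sigma^2)$; the $\ell_\infty$ bound $\|\hat x - z\|_\infty = \mathcal{O}\big(\sigma\sqrt{\log n / n}\big)$; and, crucially, $\|W\hat x\|_\infty = \mathcal{O}\big(\sqrt{n\log n}\big)$. With these, the last display becomes $w^* S w \geq n - \mathcal{O}(\sigma^2) - \mathcal{O}(\sigma^2\log n) - \mathcal{O}\big(\sigma\sqrt{n\log n}\big) - \mathcal{O}(\sigma\sqrt{n})$, and once $\sigma \leq c_0\sqrt{n/\log n}$ with $c_0$ a small enough absolute constant, every error term is at most $n/8$, so $w^* S w \geq n/2 > 0$. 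This gives $S \succeq 0$ with one-dimensional kernel $\spann\{\hat x\}$, hence $\rank(S) = n-1$, and the theorem follows.

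The main obstacle is the estimate $\|W\hat x\|_\infty = \mathcal{O}(\sqrt{n\log n})$ (from which the $\ell_\infty$ bound on $\hat x$ can in turn be recovered via the self-consistent identity $\lambda_k\hat x_k = \rho z_k + \sigma(W\hat x)_k$ with $\lambda_k \approx \rho \approx n$). The difficulty is that $\hat x$ depends on all of $W$, so the trivial bound $\|W\hat x\|_\infty \leq \opnorm{W}\|\hat x\| = \mathcal{O}(n)$ is far too lossy --- it only yields the previously known range $\sigma = \mathcal{O}(n^{1/4})$. Overcoming this requires a leave-one-out decoupling: for each coordinate $k$ one introduces an auxiliary estimator $\hat x^{(k)}$, the global optimum of~\eqref{eq:P} for a copy of $C$ whose noise matrix is independent of the $k$-th row and column of $W$; then $(W\hat x^{(k)})_k = \sum_{\ell\neq k} W_{k\ell}\hat x^{(k)}_\ell$ is conditionally a centered complex Gaussian of variance $n-1$, hence $\mathcal{O}(\sqrt{n\log n})$ uniformly in $k$, while $|(W\hat x)_k - (W\hat x^{(k)})_k| \leq \opnorm{W}\|\hat x - \hat x^{(k)}\|$ is controlled by a stability estimate showing that perturbing one row/column of $W$ barely moves the global optimum of the nonconvex program~\eqref{eq:P}. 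Proving this stability, uniformly over $k$, is the technical heart; it is the same mechanism that underlies tracking $n$ auxiliary sequences in the analysis of GPM.
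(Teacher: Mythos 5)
Your dual-certificate reduction is essentially the paper's own endgame: the matrix $S=\ddiag(\lambda)-C$ with $\lambda_k=\Re\{\overline{\hat x}_k(C\hat x)_k\}$ is exactly the certificate $S(\hat x\hat x^*)=\Re\{\ddiag(C\hat x\hat x^*)\}-C$ used in Lemma~\ref{lem:certificate}, and your verification that $w^*Sw>0$ on $\hat x^\perp$ --- given $\|W\|_2=\mathcal{O}(\sqrt{n})$, $\|\hat x-z\|_2=\mathcal{O}(\sigma)$, $\|\hat x-z\|_\infty=\mathcal{O}(\sigma\sqrt{\log n/n})$ and, above all, $\|W\hat x\|_\infty=\mathcal{O}(\sqrt{n\log n})$ --- is the same computation carried out in the ``Verifying optimality'' subsection (the paper uses $\mu_k=|(C x^\infty)_k|$ from the fixed-point equation where you use the stationarity multipliers, a cosmetic difference). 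So the part you wrote out in detail is correct but is also the easy part.

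The genuine gap is that the one estimate everything hinges on, $\|W\hat x\|_\infty=\mathcal{O}(\sqrt{n\log n})$, is not proved: you reduce it to the ``stability estimate'' $\|\hat x-\hat x^{(k)}\|_2=\mathcal{O}(1)$ uniformly in $k$ and then declare this the technical heart, but you give no argument for it, and no direct argument is known. The difficulty is a circularity: to show that zeroing out one row/column of $W$ barely moves the \emph{global optimum} of the nonconvex program~\eqref{eq:P}, one needs some quantitative curvature or uniqueness property of that optimum at noise level $\sigma\sim\sqrt{n/\log n}$, which is essentially the content of the theorem you are trying to prove (indeed, the only route to such curvature in this regime again passes through controlling $\|W\hat x\|_\infty$). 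The paper never compares the optima $\hat x$ and $\hat x^{(m)}$ directly. Instead it builds $\hat x$ as the limit of GPM iterates and proves proximity \emph{along the iterates}: by induction, $d_2(x^t,x^{t,m})\le\kappa_1$, $\|Wx^t\|_\infty\le\kappa_2\sqrt{n\log n}$ and $d_2(x^t,z)\le\kappa_3\sqrt{n}$ are propagated simultaneously using the local contraction Lemma~\ref{lem::ctrT} (so that the discrepancy $d_2(\mathcal{T}x^{t,m},\mathcal{T}^{(m)}x^{t,m})=\mathcal{O}(\sigma\sqrt{\log n/n})$ does not accumulate), with the base case supplied by the Davis--Kahan $\ell_\infty$ eigenvector analysis, concentration applied only to the first $3n^2$ iterates (so union bounds are possible), a deterministic argument for $t>T$, and completeness of $(\Cn_1/\!\sim,\,d_2)$ to extract the limit $x^\infty\in\calN$; only then is the certificate invoked to identify $x^\infty$ with the unique optimum. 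Without this (or an equivalent mechanism), your plan establishes tightness only with the lossy bound $\|W\hat x\|_\infty\le\|W\|_2\|\hat x\|_2=\mathcal{O}(n)$, i.e.\ only for $\sigma=\mathcal{O}(n^{1/4})$, which is the previously known range rather than the claimed $\sigma=\mathcal{O}(\sqrt{n/\log n})$.
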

In the theorem statement, ``up to phase'' refers to the fact that the measurements $C$ are relative: the distribution of $C$ does not change if $z$ is replaced by $ze^{i\theta}$ for any angle $\theta$, so that if $\hat x$ is a solution of~\eqref{eq:P}, then necessarily so is $\hat x e^{i\theta}$ for any $\theta$, and $z$ can only be recovered up to a global phase shift.

Theorem~\ref{thm:mainSDP} shows that the non-convex problem~\eqref{eq:P} enjoys what is sometimes called \emph{hidden convexity}, that is, in the proper noise regime, it is equivalent to a (tractable) convex problem. 
%A closely related but simpler problem is synchronization over $\mathbb{Z}_2$, which also enjoys hidden convexity. 
As a consequence, it is not a hard problem in that regime, suggesting local solvers may be able to solve it in its natural dimension.  This is desirable, since the relaxation~\eqref{eq:SDP}, while convex, has the disadvantage of lifting the problem from $n-1$ to $n(n-1)$ dimensions.

And indeed, numerical experiments in~\citep{boumal2016nonconvexphase} suggest that local optimization algorithms applied to~\eqref{eq:P} directly succeed in the same regime as~\eqref{eq:SDP}. This was confirmed theoretically in~\citep{boumal2016nonconvexphase} for $\sigma = \mathcal{O}(n^{1/6})$, using both a modification of the power method called the generalized power method (GPM) and local optimization algorithms acting directly on the search space of~\eqref{eq:P}, which is a manifold. Results pertaining to GPM have been rapidly improved to allow for $\sigma = \mathcal{O}(n^{1/4})$ in~\citep{liu2016statistical}.

In this paper, we consider a version of GPM listed as Algorithm~\ref{algo:GPM} and prove that it works in the same regime as the semidefinite relaxation, thus better capturing the empirical observation. Note that GPM, as a local algorithm, is a more desirable approach versus semidefinite relaxation in practice. GPM and its variants are also considered in a number of related problems \citep{journee2010generalized, deshpande2014conePCA, roulet2015renegar, chen2016projected}, and can be seen as special cases of the conditional gradient algorithm \citep{journee2010generalized, luss2013conditional}.

\begin{theorem}\label{thm:mainGPM}
	If $\sigma = \mathcal{O}\left(\sqrt{\frac{n}{\log n}}\right)$, with high probability for large $n$, Algorithm~\ref{algo:GPM} converges at least linearly to the global optimum of~\eqref{eq:P} (unique up to phase.)
\end{theorem}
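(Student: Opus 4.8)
The plan is to show that Algorithm~\ref{algo:GPM}, started from its spectral initialization, enters and never leaves a small neighbourhood of the maximum likelihood estimator $\hat x$ --- which by Theorem~\ref{thm:mainSDP} is the unique global optimum of~\eqref{eq:P} up to phase --- and that it contracts geometrically towards $\hat x$ inside that neighbourhood. Progress is tracked through $d_t := \min_\theta \|x^{(t)} - \hat x\, e^{i\theta}\|_2$; since the GPM map commutes with multiplication of the iterate by a unit-modulus scalar, I may rotate at each step so that $\hat x^* x^{(t)} \ge 0$ is real, whence $\hat x^*(x^{(t)} - \hat x) = -\tfrac12 d_t^2$. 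Along the way I will need, and will establish via the leave-one-out device below, two facts (holding with high probability): $\hat x$ is a fixed point of GPM, i.e.\ $C\hat x = \Lambda \hat x$ for a real diagonal $\Lambda$ with $\lambda_{\min} := \min_k \Lambda_{kk} = \Theta(n)$; and the sharpened bounds $\|\hat x - z\|_2 = \mathcal{O}(\sigma)$ and $\|\hat x - z\|_\infty = \tilde{\mathcal{O}}(\sigma/\sqrt{n})$ on the MLE.

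Set $\delta_t := x^{(t)} - \hat x$. The $k$-th GPM update is $x^{(t+1)}_k = \phase\!\big(\lambda_k \hat x_k + (C\delta_t)_k\big) = \phase\!\big(\hat x_k + (C\delta_t)_k/\lambda_k\big)$, so as long as the invariant $\max_k |(C\delta_t)_k| \le \tfrac12 \lambda_{\min}$ holds, the local Lipschitz property of $\phase$ away from the origin gives $|x^{(t+1)}_k - \hat x_k| \le 2|(C\delta_t)_k|/\lambda_k$ and hence $d_{t+1} \le \frac{2\sqrt n}{\lambda_{\min}}\big(|z^*\delta_t| + \sigma \max_k |(W\delta_t)_k|\big)$, after splitting $C\delta_t = z(z^*\delta_t) + \sigma W\delta_t$. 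For the rank-one part, $|z^*\delta_t| \le |\hat x^*\delta_t| + \|z - \hat x\|_2\|\delta_t\|_2 \le \tfrac12 d_t^2 + \mathcal{O}(\sigma)\, d_t$, which contributes at most $\mathcal{O}(\sigma\sqrt n/\lambda_{\min})\, d_t = \mathcal{O}(1/\sqrt{\log n})\, d_t$. The delicate quantity is $\max_k |(W\delta_t)_k|$: the operator-norm estimate $|(W\delta_t)_k| \le \opnorm{W}\|\delta_t\|_2 \approx 2\sqrt n\, d_t$ is too crude, and with it the invariant already breaks once $\sigma$ grows past order $n^{1/4}$ --- precisely the barrier in earlier analyses. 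Instead one shows that $(W\delta_t)_k$ is, up to lower-order corrections, a complex Gaussian of variance $\|\delta_t\|_2^2$, so that $\max_k |(W\delta_t)_k| = \tilde{\mathcal{O}}(\sqrt{\log n}\, d_t)$; the noise contribution to $d_{t+1}$ then becomes $\tilde{\mathcal{O}}(\sigma\sqrt{n\log n}/\lambda_{\min})\, d_t = \tilde{\mathcal{O}}(\sigma\sqrt{\log n}/\sqrt n)\, d_t$, a constant multiple of $d_t$ that is strictly below $1$ exactly when $\sigma \le c\sqrt{n/\log n}$ for a small enough absolute constant $c$. Collecting terms yields $d_{t+1} \le \rho\, d_t$ with $\rho \in (0,1)$, and the same estimates reinstate the invariant at step $t+1$; a Davis--Kahan bound for the leading eigenvector of $C$ together with $\|\hat x - z\|_2 = \mathcal{O}(\sigma)$ puts $x^{(0)}$ in the basin, closing the outer induction and giving at least linear convergence to $\hat x$.

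The Gaussian behaviour of $(W\delta_t)_k$ is not automatic, because $\delta_t = x^{(t)} - \hat x$ depends on the $k$-th row of $W$; eliminating that dependence is the purpose of the $n$ auxiliary sequences. For each $k \in [n]$ I run the GPM recursion on the matrix $C^{(k)}$ obtained from $C$ by overwriting its $k$-th row and column with those of $zz^*$, so that $C^{(k)}$, its leading eigenvector and its MLE $\hat x^{(k)}$ are all independent of row $k$ of $W$; denote the resulting iterates $x^{(t,k)}$. Writing $\delta_t = \big(\delta_t - \delta_t^{(k)}\big) + \delta_t^{(k)}$ with $\delta_t^{(k)} := x^{(t,k)} - \hat x^{(k)}$, the second piece is conditionally Gaussian, giving $|(W\delta_t^{(k)})_k| = \tilde{\mathcal{O}}(\sqrt{\log n}\,\|\delta_t^{(k)}\|_2)$, while the first is controlled by a parallel induction on the discrepancies $\|x^{(t)} - x^{(t,k)}\|_2$, $\|\hat x - \hat x^{(k)}\|_2$ and --- crucially, so that the estimate still vanishes as $x^{(t)} \to \hat x$ --- on the difference of errors $\|\delta_t - \delta_t^{(k)}\|_2$; propagating these requires bounding the rank-plus-sparse perturbations $(C - C^{(k)})(\cdot)$ entrywise rather than in operator norm, which in turn forces one to carry $\ell_\infty$ bounds on $x^{(t)}$, on $\hat x$, and on all the leave-one-out analogues throughout. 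The crux of the proof, and its main obstacle, is exactly this bookkeeping: closing the coupled inductions --- on $d_t$, on the per-coordinate discrepancies and error-differences, and on the attendant $\ell_2$ and $\ell_\infty$ norms --- with constants sharp enough that the effective contraction rate stays below $1$ for $\sigma$ as large as $c\sqrt{n/\log n}$, rather than only for $\sigma = \mathcal{O}(n^{1/4})$ as a coarser, operator-norm analysis would give. A by-product of the analogous analysis of the leading eigenvector itself is the $\ell_\infty$ perturbation bound mentioned in the abstract, which is precisely what feeds the initialization step above. (If Algorithm~\ref{algo:GPM} applies $\phase$ to $(C + \alpha \Id)x^{(t)}$ for a shift $\alpha = \Theta(\sigma\sqrt n)$ rather than to $Cx^{(t)}$, the argument is unchanged with $\lambda_k$ replaced by $\lambda_k + \alpha$, still of order $n$.)
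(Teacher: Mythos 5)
There is a genuine gap, and it sits exactly where you flag ``the crux'': your contraction is anchored at $\hat x$, and because you convert the entrywise update bound into $d_{t+1}$ with a factor $\sqrt n$, the recursion only closes if you can prove the \emph{multiplicative} estimate $\max_k |(W\delta_t)_k| = \tilde{\mathcal{O}}(\sqrt{\log n}\,\|\delta_t\|_2)$ at every iteration. Your leave-one-out reduction translates this into $\|\delta_t - \delta_t^{(k)}\|_2 = \tilde{\mathcal{O}}(\sqrt{\log n/n}\;\|\delta_t\|_2)$ uniformly in $t$ and $k$, i.e.\ a bound on a difference of errors of two sequences that converge to two \emph{different} limits ($\hat x$ versus $\hat x^{(k)}$), relative to an error $\|\delta_t\|_2$ that tends to zero. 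Nothing in the proposal closes that induction: the contraction only yields the additive bound $\|\delta_{t+1}-\delta_{t+1}^{(k)}\|_2 \le \rho\,(\|\delta_t\|_2+\|\delta_t^{(k)}\|_2)$, and if $\|\delta_t\|_2$ happens to decay slightly faster than $\|\delta_t^{(k)}\|_2$ the ratio $\|\delta_t-\delta_t^{(k)}\|_2/\|\delta_t\|_2$ blows up, so the claimed estimate is not just unfinished bookkeeping --- it is doubtful as stated. The paper never needs a bound on $\|W\delta_t\|_\infty$ relative to $\|\delta_t\|_2$: its contraction lemma (Lemma~\ref{lem::ctr}) is an $\ell_2$ statement anchored at $z$, in which the noise enters the Lipschitz constant only through $\sigma\|W\|_2/n$, while the $\ell_\infty$ information --- the \emph{absolute} bound $\|Wx^t\|_\infty \le \kappa_2\sqrt{n\log n}$, obtained from the auxiliary sequences together with the constant proximity $d_2(x^t,x^{t,m})\le\kappa_1$ --- is used only to keep $|(Cx^t)_k|$ bounded away from $0$ so that the entrywise phase map is $(1-\varepsilon)^{-1}$-Lipschitz (Lemmas~\ref{lem::P0} and~\ref{lem::ctrT}). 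Convergence then comes from geometric decay of successive differences and completeness (Theorems~\ref{thm::geoDcr} and~\ref{thm::finCnvg}), not from contraction toward $\hat x$.

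A second, related gap is circularity in your inputs about $\hat x$. You assume $C\hat x = \Lambda\hat x$ with $\lambda_{\min}=\Theta(n)$ and $\|\hat x - z\|_\infty = \tilde{\mathcal{O}}(\sigma/\sqrt n)$, to be ``established via the leave-one-out device,'' but at $\sigma \sim \sqrt{n/\log n}$ verifying $\lambda_{\min}=\Theta(n)$ (or the $\ell_\infty$ bound) requires precisely $\|W\hat x\|_\infty = \tilde{\mathcal{O}}(\sqrt{n\log n})$, i.e.\ control of $d_2(\hat x,\hat x^{(k)})$ --- the paper's central difficulty, for which no direct, non-algorithmic argument is known or offered in your sketch; the device you describe operates on the iterates, not on the optimizers. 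The paper avoids this by never invoking $\hat x$ during the iteration analysis: it constructs the limit $x^\infty$ of GPM, reads off $d_2(x^\infty,z)$ and $\|Wx^\infty\|_\infty$ from the induction (Theorem~\ref{thm::indct} and~\ref{thm::finCnvg}), and only afterwards identifies $x^\infty$ with the unique global optimum through the dual certificate $S=\Re\{\ddiag(Cx^\infty (x^\infty)^*)\}-C$ (Lemma~\ref{lem:certificate}); your Theorem~\ref{thm:mainbound}-type bounds are outputs of that argument, not admissible inputs. Finally, a smaller technical point: your concentration claims are invoked for all $t$, but a union bound over infinitely many iterations is unavailable; the paper applies concentration only to the first $T=3n^2$ iterates and finishes with a deterministic argument, a device your plan would also need.
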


\begin{algorithm}[t]
	\caption{Generalized power method (GPM) without shift}\label{algo:GPM}
	\begin{algorithmic}[1]
		\State \textbf{Input:} Hermitian measurement matrix $C \in \Cnn$. %; number of iterations $L$
%		\State \textbf{Output:} $x^{L}$, approximate solution of~\eqref{eq:P}
		\State \textbf{Initialize:} Set $x^0$ to be a leading eigenvector of $C$ with $\|x^0\|_2 = \sqrt{n}$.
		\For{$t = 1,2\ldots$} %\Comment{\TODO{not sure we should limit to $L$ iterates}}
		%		\State Update: $y^t = C x^{t-1}$
		%		\State Project: $(x^t)_j = (y^t)_j / |(y^t)_j| $ for $1 \le j \le n$ \TODO{argue why this never divides by 0}
		\State $x^t = \mathcal{P}(Cx^{t-1})$ \Comment{Apply entrywise $\mathcal{P}(a) = \begin{cases}
			a/|a| & \textrm{if } a\neq 0, \\ 1 & \textrm{otherwise.}
			\end{cases}$}
		\EndFor
%		\State \Return $x^L$
	\end{algorithmic}
\end{algorithm}

%If $X$ is optimal for~\eqref{eq:SDP} and has rank 1, then $X = xx^*$ for some $x\in\Cn$ such that $|x_k| = 1$ for all $k$, and $x$ is a global optimum of~\eqref{eq:P}. Since~\eqref{eq:SDP} can be solved in polynomial time, it follows that~\eqref{eq:P} can be solved in polynomial time if~\eqref{eq:SDP} has a unique solution of rank 1.
%\TODO{---}
%so that~\eqref{eq:P} can be solved to global optimality in polynomial time. Unfortunately, theoretical results in~\citep{bandeira2014tightness} guarantee this only for $\sigma$ up to $\mathcal{O}(n^{1/4})$. Similarly, numerical experiments in~\citep{boumal2016nonconvexphase} suggest a non-convex approach to~\eqref{eq:P} called generalized power method also solves~\eqref{eq:P} to global optimality for $\sigma = \tilde{\mathcal{O}}(\sqrt{n})$, but theoretical guarantees only apply for $\sigma$ up to $\mathcal{O}(n^{1/6})$, a result soon improved to $\mathcal{O}(n^{1/4})$ in~\citep{liu2016statistical}.

%In this paper, we establish that both aforementioned approaches---the semidefinite relaxation and a (simplified) generalized power method---indeed solve~\eqref{eq:P} to global optimality and with high probability for $\sigma$ up to $\mathcal{O}(\sqrt{n/\log n})$, thus fully explaining the empirical findings (up to constant factors).

To establish both results, we develop an original proof technique based on following $n+1$ separate but closely related sequences of feasible points for~\eqref{eq:P}, designed so that they will have suitable statistical independence properties. Furthermore, as a necessary step toward proving the main theorems, we prove an $\ell_\infty$ perturbation bound for eigenvectors, which is of independent interest.

It is worth noting that for $\sigma > \sqrt{n}$, it is impossible to reliably detect, with probability tending to 1, whether $C$ is of the form $zz^* + \sigma W$ or if it is only of the form $\sigma W$~\citep[Thm.~6.11]{perry2016optimality}, which suggests that $\sigma < \sqrt{n}$ is necessary in order for a good estimator to exist. This can be made precise by considering the simpler problem of $\mathbb{Z}_2$ synchronization,\footnote{The problem is formulated as follows: $z \in \{-1, 1\}^n$, noise $W$ is a real random matrix, e.g., a Gaussian Wigner matrix, and the goal is to recover $z$ from $C = zz^T + \sigma W$. } where we have the stronger knowledge that $z_k \in \{\pm 1\}$. %Intuitively, this is because $\|zz^*\|_2 = n$ is then too much smaller than $\|\sigma W\|_2$, which concentrates around $2n$. This is related to the BBP transition \citep{baik2005phase, Ben11}, which identifies $\sigma = \sqrt{n}$ as the threshold; the leading eigenvector is separately from the bulk of spectra only when $\sigma$ is below the threshold. Moreover, the leading eigenvector is correlated with $z$ as soon as $\sigma < \sqrt{n}$~\citep{singer2010angular,Ben11,perry2016optimality}. The evidence that $\sigma < \mathcal{O}(\sqrt{n  / \log n})$ is necessary appears in \cite{bandeira2014tightness, bandeira2015random}, in which they suggests information-theoretically exact recovery is impossible for synchronization over $\mathbb{Z}_2$ (i.e.\ $z \in \mathbb{Z}_2^n$). It would imply, by putting a uniform prior on $\mathbb{Z}_2^n$, that $\sigma < \mathcal{O}(\sqrt{n  / \log n})$ is necessary in order for any estimator $x$ to achieve $\| x - z \|_\infty < 1$ with high probability (better than a trivial estimator). Our next theorem matches this lower bound.
For the $\mathbb{Z}_2$ synchronization problem, non-rigorous arguments that use techniques from statistical mechanics show $\sigma = \sqrt{n}$ is the information-theoretic threshold for mean squared estimation error (MSE):  when $\sigma$ is above this threshold, no estimator is able to beat the trivial estimator $x = 0$ as $n \to \infty$ \citep{javanmard2016phase}. In \cite{lelarge2016fundamental}, it was rigorously proved that $\sigma = \sqrt{n}$ is the threshold for a different notion of MSE. These results a fortiori imply, for phase synchronization, that $\sigma = \mathcal{O}(\sqrt{n})$ is necessary\footnote{Suppose the prior is supported and uniformly distributed on $\{-1, 1\}^n$. By independence, the Bayes-optimal estimator for phase synchronization is a function of $\Re(C)$, so we can use information-theoretic results about the $\mathbb{Z}_2$ synchronization problem.} in order for an estimator to have nontrivial MSE (better than the trivial estimator $x=0$). It is also known that both the eigenvector estimator and the MLE have nontrivial MSE as soon as $\sigma < \sqrt{n}$ \citep{capitaine2009largest, Ben11, javanmard2016phase}. Whether the extra logarithmic factor is necessary to compute the MLE efficiently up to the threshold remains to be determined.

To close this introduction, we state the relevance of the MLE $\hat x$ as an estimator for $z$.
\begin{theorem}\label{thm:mainbound}
 Let $\hat x$ be a global optimum of~\eqref{eq:P}, with global phase such that $z^*\hat x = |z^*\hat x|$.  Then, deterministically,
 \begin{align}
	 \|\hat x - z\|_2 & \leq 4\sigma \frac{\|W\|_2}{\sqrt{n}}.
 \end{align}
 Furthermore, if $\sigma = \mathcal{O}(\sqrt{n/\log n})$, then with high probability for large $n$, 
 \begin{align}
	 \|\hat x - z\|_2 & = \mathcal{O}(\sigma), \textrm{ and} \\
	 \|\hat x - z\|_\infty & = \mathcal{O}(\sigma \sqrt{\log n/n}).
 \end{align}
\end{theorem}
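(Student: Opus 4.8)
\emph{The two $\ell_2$ bounds.} The plan is to extract the deterministic bound from the optimality of $\hat x$ together with operator--nuclear norm duality, and then to plug in a standard spectral-norm estimate for $W$. Since $z$ is feasible for~\eqref{eq:P} and $\hat x$ is optimal, $\hat x^* C \hat x \ge z^* C z$; substituting $C = zz^* + \sigma W$ and using $\|\hat x\|_2^2 = \|z\|_2^2 = n$ gives
\begin{align*}
 n^2 - |z^*\hat x|^2 \;\le\; \sigma\,\langle W,\ \hat x\hat x^* - zz^*\rangle .
\end{align*}
The matrix $\hat x\hat x^* - zz^*$ has rank at most $2$, so its nuclear norm is at most $\sqrt 2\,\|\hat x\hat x^* - zz^*\|_\mathrm{F}$, and $\|\hat x\hat x^* - zz^*\|_\mathrm{F}^2 = 2(n^2 - |z^*\hat x|^2)$; hence the right-hand side is at most $2\sigma\|W\|_2\sqrt{n^2 - |z^*\hat x|^2}$, and cancelling a factor yields $n^2 - |z^*\hat x|^2 \le 4\sigma^2\|W\|_2^2$. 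With the phase convention $z^*\hat x = |z^*\hat x| \ge 0$ we then get
\begin{align*}
 \|\hat x - z\|_2^2 \;=\; 2(n - |z^*\hat x|) \;=\; \frac{2(n^2 - |z^*\hat x|^2)}{n + |z^*\hat x|} \;\le\; \frac{8\sigma^2\|W\|_2^2}{n},
\end{align*}
which gives the deterministic bound. Part~2 is then immediate, since $\|W\|_2 = \mathcal O(\sqrt n)$ with high probability for $W$ as in~\eqref{eq:C}.

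\emph{The $\ell_\infty$ bound: reduction.} I would first record the standard fact that the first-order optimality conditions for~\eqref{eq:P} force $\hat x$ to be a fixed point of Algorithm~\ref{algo:GPM}, i.e.\ $\hat x = \mathcal P(C\hat x)$. Writing $\rho = z^*\hat x \ge 0$, we have $(C\hat x)_k = \rho z_k + \sigma (W\hat x)_k$ and $z_k = \rho z_k/|\rho z_k|$, so the elementary inequality $\bigl|\tfrac{a+b}{|a+b|} - \tfrac{a}{|a|}\bigr| \le \tfrac{2|b|}{|a+b|}$ gives
\begin{align*}
 |\hat x_k - z_k| \;\le\; \frac{2\sigma\,|(W\hat x)_k|}{|(C\hat x)_k|}.
\end{align*}
From Part~1 and $\|W\|_2 = \mathcal O(\sqrt n)$, in the regime $\sigma = \mathcal O(\sqrt{n/\log n})$ we have $\rho \ge n - 2\sigma\|W\|_2 = n(1-o(1))$. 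Hence, once we show $\max_k |(W\hat x)_k| = \mathcal O(\sqrt{n\log n})$ with high probability --- with a sufficiently small implied constant in $\sigma = \mathcal O(\cdot)$ --- the denominator obeys $|(C\hat x)_k| \ge \rho - \sigma|(W\hat x)_k| \gtrsim n$, and the display gives $\|\hat x - z\|_\infty = \mathcal O(\sigma)\cdot\mathcal O(\sqrt{n\log n})/n = \mathcal O(\sigma\sqrt{\log n/n})$. So Part~3 reduces entirely to the estimate $\max_k|(W\hat x)_k| = \mathcal O(\sqrt{n\log n})$.

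\emph{The $\ell_\infty$ bound: the key estimate and the main obstacle.} The obstacle is that $\hat x$ depends on all of $W$, so $(W\hat x)_k$ is not a Gaussian vector and a union bound is not directly available. The plan is a leave-one-out construction: for each $k$, let $\hat x^{(k)}$ be an optimum of~\eqref{eq:P} for $C^{(k)} = zz^* + \sigma W^{(k)}$, where $W^{(k)}$ is $W$ with its $k$-th row and column zeroed (phase fixed as for $\hat x$). Then $\hat x^{(k)}$ is independent of $\{W_{k\ell}\}_{\ell\in[n]}$, so conditionally $(W\hat x^{(k)})_k = \sum_{\ell\ne k} W_{k\ell}\hat x^{(k)}_\ell$ is centered complex Gaussian of variance $n-1$, and a union bound over $k$ gives $\max_k |(W\hat x^{(k)})_k| = \mathcal O(\sqrt{n\log n})$ with high probability. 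The crux is the remaining term $(W(\hat x - \hat x^{(k)}))_k$: the naive bound $\|W_{k,:}\|_2\,\|\hat x - \hat x^{(k)}\|_2 \le \mathcal O(\sqrt n)\cdot\mathcal O(\sigma)$ is of order $\sigma\sqrt n \gg \sqrt{n\log n}$, because a single row/column of $W$ has operator norm of the same order $\sqrt n$ as $W$ itself --- so deleting it is \emph{not} a small perturbation, and any static operator-norm stability bound for the optimizer is too lossy. The resolution, which is the heart of the paper's technique, is to carry the $n$ auxiliary sequences $x^{t,(k)}$ (Algorithm~\ref{algo:GPM} run with $C^{(k)}$ in place of $C$) alongside the true iterate $x^t$, and prove, by one simultaneous induction on $t$, that $\|x^t - z\|_2$ is small, that $\max_k |(W x^t)_k| = \mathcal O(\sqrt{n\log n})$, and that $\max_k\|x^t - x^{t,(k)}\|_2$ is small. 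The inductive step uses that $x^{t,(k)}$ is independent of the $k$-th row/column of $W$, so the one-step effect of restoring it, $\mathcal P(C x^{t,(k)}) - \mathcal P(C^{(k)} x^{t,(k)})$, is governed by $\|(C - C^{(k)})x^{t,(k)}\|_2 = \mathcal O(\sigma\sqrt{n\log n})$ and contributes only $\mathcal O(\sigma\sqrt{\log n/n})$ to $\|x^{t+1} - x^{t+1,(k)}\|_2$; combined with the linear contraction of GPM near $z$ underlying Theorem~\ref{thm:mainGPM}, these errors do not accumulate, so $\max_k\|x^t - x^{t,(k)}\|_2 = \mathcal O(\sigma\sqrt{\log n/n})$ uniformly in $t$, hence $\|\hat x - \hat x^{(k)}\|_2 = \mathcal O(\sigma\sqrt{\log n/n})$ in the limit. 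Then $(W(\hat x - \hat x^{(k)}))_k \le \|W_{k,:}\|_2\|\hat x - \hat x^{(k)}\|_2 = \mathcal O(\sqrt n)\cdot\mathcal O(\sigma\sqrt{\log n/n}) = \mathcal O(\sigma\sqrt{\log n}) = \mathcal O(\sqrt{n\log n})$ (using $\sigma = \mathcal O(\sqrt n)$), which closes the key estimate, and with it Part~3.

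\emph{Caveat.} Starting this bookkeeping requires an $\ell_\infty$ bound on the spectral initializer $x^0$; this comes from the same leave-one-out idea applied to the leading eigenvector of $C$ versus that of $C^{(k)}$, and is the eigenvector perturbation bound advertised in the introduction. I expect the real difficulty to lie in propagating the three bounds through the induction on $t$ at the borderline scaling $\sigma \asymp \sqrt{n/\log n}$: keeping every constant small enough that $\rho = n(1-o(1))$, that GPM genuinely contracts, and that all $x^{t,(k)}$ stay inside the region where these estimates are valid --- precisely the work that also goes into Theorem~\ref{thm:mainGPM}.
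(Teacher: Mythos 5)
Your proposal is correct and follows essentially the same route as the paper: the $\ell_2$ bound via optimality of $\hat x$ plus a spectral-norm bound on $W$ (the paper cites \citep[Lem.~4.1]{bandeira2014tightness} for exactly the argument you spell out), and the $\ell_\infty$ bound reduced, as in the paper, to showing $\|W\hat x\|_\infty = \mathcal{O}(\sqrt{n\log n})$, which is obtained through the leave-one-out auxiliary GPM sequences, the proximity/contraction induction, and the $\ell_\infty$ eigenvector initialization. The steps you defer are precisely the paper's main technical content (Theorems~\ref{thm::indct}--\ref{thm::finCnvg}), including one point your sketch passes over lightly: the inductive bounds are established for the GPM limit $x^\infty$ (and its auxiliary analogues, which need not a priori be the optimizers $\hat x^{(k)}$), and it is the dual certificate of Lemma~\ref{lem:certificate} that identifies $x^\infty$ with the global optimum $\hat x$ so that the $\ell_\infty$ control transfers to the MLE.
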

The bound on $\ell_2$ error appears in~\citep[Lem.~4.1]{bandeira2014tightness}, while the bound on $\ell_\infty$ error improves on~\citep[Lem.~4.2]{bandeira2014tightness} as a by-product of the results obtained here. We remark that $\sigma = \mathcal{O}(\sqrt{n/\log n})$ is necessary for a nontrivial $\ell_\infty$ error (smaller than $1$, which is trivially attained by $x = 0$) due to \cite{bandeira2014tightness, bandeira2015laplacian}.\footnote{In \cite{bandeira2014tightness, bandeira2015laplacian}, it is suggested information-theoretically exact recovery with high probability is impossible for synchronization over $\mathbb{Z}_2$ if $\sigma > \sqrt{n/(2-\varepsilon) \log n}$. We can use this result to show $\mathcal{O}(\sqrt{n/\log n})$ is necessary for a nontrivial $\ell_\infty$ error by putting a uniform prior on $\{-1, 1\}^n$. } 

It is important to state that the eigenvector estimator  mentioned above is order-wise as good an estimator as the MLE, in that it satisfies the same error bounds as in Theorem~\ref{thm:mainbound} up to constants. From the perspective of optimization, the main merit of Theorems 1 and 2 is that they rigorously explain the empirically observed tractability of (P) despite non-convexity. 

\subsection*{The difficulty: statistical dependence}

As will be argued momentarily, the main difficulty in the analysis is proving a sharp bound for $\| W \hat{x} \|_\infty$, which involves two dependent random quantities: the noise matrix $W$ and a solution $\hat x$ of~\eqref{eq:P}, which is a nontrivial function of $W$. While in the $\ell_2$-norm the simple bound $\| W \hat{x} \|_2 \leq \|W\|_2 \|\hat x\|_2$ is sharp,
%(as often in similar situations),
no such simple argument is known to bound the $\ell_\infty$-norm. The need to study perturbations in $\ell_\infty$-norm appears inescapable, as it arises from the entry-wise constraints of~\eqref{eq:P} and the aim to control $\|\hat x - z\|_\infty$ as well as $\|\hat x - z\|_2$.

%The need is inherent to the problem, whose component-wise constraints force us to study the $\ell^\infty$ bound, and it is further compounded by our quest to study $\| \hat{x} - z \|_\infty$. The eigenvector problem, on the contrary, is much more amenable, if one is only interested in the $\ell^2$ bound; however, the task of $\ell^\infty$ bounds remains formidable in general.

%This difficulty stems from the fact that, while one usually won't trouble with the dependence between a random matrix and a random vector when studying $\ell^2$ bounds, it is crucial to tackle the dependence carefully under the $\ell^\infty$ norm. This is because a trivial bound like $\| W \hat{x} \|_2 \le \| W \|_2 \|\hat{x}\|_2$ is usually sharp \textit{regardless of} the dependence, but a sharp bound for $\| W \hat{x} \|_\infty$ requires delicate analysis to show their dependence is indeed mild.

This issue has already been raised in~\citep[\S4]{bandeira2014tightness}, which focuses on the relaxation (\ref{eq:SDP}).
%If the semidefinite program~\eqref{eq:SDP} admits a unique solution of rank~1, then~\eqref{eq:SDP} finds the optimal $\hat{x}$ for ~\eqref{eq:P}. 
Specifically, in~\citep[eq.~(4.10)]{bandeira2014tightness}, it is shown that the relaxation is tight in particular if
\begin{align}
	n - 216\sigma^2 - 3\sigma \sqrt{n} - \sigma \|W\hat x\|_\infty > 0,
	\label{eq:originalSDPanalysis}
\end{align}
where $\hat x$ is an optimum of~\eqref{eq:P} which is then unique up to phase. From this expression, it is apparent that if $\sigma = \mathcal{O}(\sqrt{n/\log n})$, it only remains to show that $\|W\hat x\|_\infty = \mathcal{O}(\sqrt{n \log n})$ to conclude that solving~\eqref{eq:SDP} is equivalent to solving~\eqref{eq:P}. This reduces the task to that of carefully bounding this scalar, random variable:
%In~\citep[eq.~(4.11)]{bandeira2014tightness}, the bottleneck in the analysis leading to a suboptimal bound on $\sigma$ reduces to that of accurately bounding
\begin{align}
\|W \hat x\|_\infty & = \max_{k \in [n]} |w_k^* \hat x|,
\label{eq:Wxhatinfty}
\end{align}
where $w_1, \ldots, w_n \in \Cn$ are the columns of the random noise matrix $W$.
%and $\hat x$ is an optimum of~\eqref{eq:P}.
If $W$ and $\hat x$ were statistically independent, this would be bounded with high probability by $\mathcal{O}(\sqrt{n\log n})$, as desired. Indeed, since the vector $\hat x$ contains only phases and since the Gaussian distribution is isotropic (the distribution is invariant under rotation in the complex plane), $w_k^*\hat x$ would be distributed identically to a sum of $n-1$ independent standard complex Gaussians. The modulus of such a variable concentrates close to $\sqrt{n}$. Taking the maximum over $k \in [n]$ incurs an additional $\mathcal{O}( \sqrt{\log n})$ factor. 
%\sout{This is detailed in Lemma \ref{lem::Wz}.}
%This is detailed in \TODO{check statements here; ref lemma}.
%\TODO{give short rationale}.
%Such a bound would be sufficient to establish guarantees for $\sigma$ up to $\mathcal{O}(\sqrt{n/\log n})$.

Unfortunately,
%the optimum $\hat x$ is a function of $W$ since the cost function of~\eqref{eq:P} depends on $W$: they are statistically dependent in a complex fashion. Because of this difficulty,
the intricate dependence between $W$ and $\hat x$ has not been satisfactorily resolved in previous work, where only suboptimal bounds have been produced for $\|W\hat x\|_\infty$, eventually leading to suboptimal bounds on the acceptable noise levels $\sigma$~\citep[eq.~(4.11)]{bandeira2014tightness}\citep[Lemma~12]{boumal2016nonconvexphase}\citep[Proof of Thm.~2]{liu2016statistical}.

As a key step to overcome this difficulty, we (theoretically) introduce auxiliary problems to transform the question of controlling $\|W\hat x\|_\infty$ into one about the sensitivity of the optimum $\hat x$ to perturbations of the data
$C$. This is outlined next.

%\TODO{I want to discuss this paragraph} The trouble encountered with controlling the $\ell^\infty$ norm can also be illustrated from the perspective of convergence of iterative algorithms. Under the regime $\sigma = \mathcal{O}(\sqrt{n / \log n})$, the \textit{contraction region} for Algorithm \ref{algo:GPM} is $\calN := \calN_1 \cap \calN_2$, where 
%\begin{equation*}
%\calN_1 = \{ x \in \mathbb{C}^n: \min_{\theta} \| e^{i \theta} x - z \|_2 \le \kappa_2\sqrt{n} \},  \qquad \calN_2 = \{ x \in \mathbb{C}^n:  \| W x \|_\infty \le \kappa_3\sqrt{n \log n} \},
%\end{equation*}
%where $\kappa_2,\kappa_3>0$ is some constant. Roughly speaking, the condition $x \in \calN_1$ is equivalently to $x$ and $z$ forming a positive angle, which is standard in eigenvector problems. The trouble, evidently, lies in $\calN_2$, whose geometry is \textit{random} depending on the random matrix $W$. We need strong control of $\| W x \|_{\infty}$ because of entrywise operator of $\phase(\cdot)$: when $x \notin \calN_2$, it is possible that some phases of $Cx$ is close to zero, making $\phase(Cx)$ very unstable. 

\subsection*{Introducing auxiliary problems to reduce dependence}\label{sec::introAux}

%In this paper, we develop a proof that the dependence between $W$ and $\hat x$ is small enough so that $\|W \hat x\|_\infty$ remains bounded by the same order of magnitude as if there were no dependence at all. 
%\TODO{rephrase or delete}
%
Since the main concern in controlling $\|W\hat x\|_\infty$ is the statistical dependence between $W$ and $\hat x$, we introduce $n$ new optimization problems of the form~\eqref{eq:P}, where, for each value of $m$ in $[n]$, the cost matrix $C$ is replaced by
\begin{align}
C^{(m)} & = zz^* + \sigma W^{(m)}, & & \textrm{ with } & W^{(m)}_{k\ell} & = W_{k\ell} \mathbf{1}_{\{k \neq m\}} \mathbf{1}_{\{\ell \neq m\}},
\label{eq:Cm}
\end{align}
where $\mathbf{1}$ is the indicator function. In other terms, $W^{(m)}$ is $W$ with the $m$th row and column set to 0, so that $C^{(m)}$ is statistically independent from $w_m$.
%, the $m$th column of $W$.
As a result, a global optimum $\hat x^{(m)}$ of~\eqref{eq:P} with $C$ set to $C^{(m)}$ is also independent from $w_m$. This usefully informs the following observation, where the global phases of $\hat x$ and $\hat x^{(m)}$ are chosen so that $\hat x^* \hat x^{(m)} = |\hat x^* \hat x^{(m)}|$:
\begin{align}
|(W\hat x)_m| = |w_m^* \hat x| & \leq |w_m^* \hat x^{(m)}| + |w_m^*(\hat x - \hat x^{(m)})| \nonumber\\
& \leq |w_m^*\hat x^{(m)}| + \|w_m\|_2 \|\hat x - \hat x^{(m)}\|_2. \label{eq:inftyboundsplit}
\end{align}
Crucially, independence of $w_m$ and $\hat x^{(m)}$ implies the first term is $\mathcal{O}(\sqrt{n\log n})$ with high probability, by the argument laid out after eq.~\eqref{eq:Wxhatinfty}. In the second term, a standard concentration argument shows $\|w_m \|_2 = \mathcal{O}(\sqrt{n})$ with high probability---see Section~\ref{sec:eigen}. Hence, to control $\|W\hat x\|_\infty$, it is sufficient to show that, with high probability for all $m$, the solutions $\hat x$ and $\hat x^{(m)}$ are within distance $\mathcal{O}(1)$ of each other, in the $\ell_2$ sense.% Using the strong connection between $\hat x$ and $\hat x^{(m)}$, we show that $\|\hat x - \hat x^{(m)}\|_2 = \mathcal{O}(1)$, with high probability.

This claim about the proximity of $\hat x$ and $\hat x^{(m)}$ turns out to be a delicate statement about the sensitivity of the global optimum of~\eqref{eq:P} to perturbations of only measurements which involve the $m$th phase, $z_m$. To establish it, we need precise control of the properties of the optima of~\eqref{eq:P}. To this end, we develop a strategy to track the properties of sequences which converge to $\hat x$ as well as to $\hat x^{(m)}$ for each $m$.

%Before developing this idea,
% illuminating this idea, -- too "grand"
%consider the following function:
To ease further discussion about $\ell_2$ distances up to phase, consider the following distance-like function:
\begin{align*}
	d_2(x, y) & := \min_{\theta\in\reals} \| xe^{i\theta} - y \|_2 , \qquad x,y \in \mathbb{C}^n.
	%\label{eq:dtwo}
\end{align*}
Restricted to complex vectors of given $\ell_2$-norm or to complex vectors with unit-modulus entries, $d_2$ is a true metric on the quotient space induced by the equivalence relation $\sim$:
\begin{align}
	x \sim y \iff \exists \, \theta : x = y e^{i\theta}.
	\label{eq:sim}
\end{align}
Thus, $d_2$ is appropriate as a distance between estimators for~\eqref{eq:P} and as a distance between candidate eigenvectors, being invariant under global phase shifts. Moreover, the quotient space is a complete metric space with $d_2$. More details will follow.

Coming back to our problem, the core argument is an analysis of
%\sout{ GPM as a local contraction mapping. Namely, we consider $\calN := \calN_1 \cap \calN_2$, where}
%\uwave{
recursive error bounds of GPM, and this analysis leads to the proof that all iterates stay in $\calN := \calN_1 \cap \calN_2$, where
%} 
\begin{align}
	\calN_1 & = \{ x \in \mathbb{C}^n:  \| W x \|_\infty \le \kappa_2\sqrt{n \log n} \}, \label{eq:None}\\
	\calN_2 & = \{ x \in \mathbb{C}^n: d_2(x, z) \le \kappa_3\sqrt{n} \}, \label{eq:Ntwo}
\end{align}
and $\kappa_2,\kappa_3>0$ are some constants (determined in Section \ref{sec:phaseSyn}). 
%\uwave{With high probability, the iterates $x^t$ of Algorithm~\ref{algo:GPM} enjoy a `local contraction property' that is similar to that of a contraction mapping, and thus $x^t$ converges in $\calN$. Note that the notion `local contraction property' is weaker than contraction mappings, as we only prove that the sequence $\{ x^t\}$ stay in $\calN$ after each update, instead of the whole region $\calN$. The roadmap of our proof is the following:} \sout{Looking at the iterates of Algorithm~\ref{algo:GPM}, we show that}
On one hand, we show that, with high probability, the nonlinear mapping $\calT x = \mathcal{P}(Cx)$ iterated by GPM is Lipschitz continuous over $\calN$ with constant $\rho \in (0, 1)$. On the other hand, we show that, with high probability, all iterates of GPM are in $\calN$. Together, these two properties imply that, with high probability, $\calT$ is a contraction mapping over the set of iterates of GPM. By a completeness argument, this implies that the sequence of iterates of GPM converges in $\calN$. The roadmap of our proof is the following:
\begin{enumerate}
	\item $x^0 \in \calN$ (this requires developing new $\ell_\infty$ bounds for eigenvector perturbation---see Theorem \ref{thm::mainEig2}),
	\item  $x^t \in \calN \implies x^{t+1} \in \calN$ (this is done in two stages,\footnote{To be precise, for large $t$, we consider a slightly larger $\calN$ (the constants in (\ref{eq:None}) and (\ref{eq:Ntwo}) are larger), and prove that all $x^t$ stay in this larger region. This is a technical issue which does not affect the overall plan.} for small and large $t$---see Theorems~\ref{thm::indct} and~\ref{thm::finCnvg}),
	\item $\calT $ is $\rho$-Lipschitz with $\rho < 1$ on $\calN$ with respect to $d_2$ (by completeness, this implies $\lim_{t\to\infty} x^t = x^\infty \in \calN$---see Lemma \ref{lem::ctrT}), and
	\item any fixed point $x^\infty$ of $\mathcal{T}$ in $\calN$ is a global optimum of~\eqref{eq:P} (see Lemma \ref{lem:certificate}).
\end{enumerate}
On top of securing results about GPM, this will imply that~\eqref{eq:P} admits a solution $\hat x = x^\infty$ which is in $\calN$ and hence, a fortiori, satisfies $\| W \hat x \|_\infty \le \kappa_2\sqrt{n \log n}$, yielding the announced results about the SDP relaxation as per~\eqref{eq:originalSDPanalysis}.

As hinted above, we follow this reasoning not only for the sequence $x^t$ which is expected to converge to $\hat x$, but also for auxiliary sequences $x^{t,m}$ expected to converge to $\hat x^{(m)}$. It is only through exploitation of the strong links between these sequences and reduction in statistical dependence they offer that we are able to go through with the proof program above.

Note that $\calT$ might not be a contraction mapping on all of $\calN$ since we do not show that $\calT(\calN) \subset \calN$. Nevertheless, $\calT$ is a contraction on the iterates, which is sufficient for our purpose; henceforth, we say the mapping has the \emph{local contraction property}.

We remark that, in the study of high-dimensional $M$-estimation \cite{bean2013optimal}, the idea of introducing auxiliary problems (and associated optimizers) is also used to tackle dependence, and it yields powerful analysis. While sharing similarity with that approach, our analysis relies on studying $n$ auxiliary sequences of iterates, as will be discussed soon---also see Figure \ref{fig:contraction}.

As a necessary and useful warm-up, we first focus on the task of showing that $x^0$ (a leading eigenvector of $C$) is in $\calN$, via analysis of the related $x^{0,m}$ (leading eigenvectors of $C^{(m)}$). This requires sharp bounds for $d_2(x^0, x^{0,m})$. The outcome of this analysis is an eigenvector perturbation bound in the $\ell_\infty$-norm, which is another motivation for the introduction of auxiliary problems.

\subsection*{First analysis: an $\ell_\infty$ perturbation bound for eigenvectors}

%\TODO{careful with $d_2$ and $\tilde x$ definitions.}

%{\footnotesize\TODO{@Joe: I changed notation to only talk about $x^0$ and $x^{0,m}$ instead of $\tilde x, \tilde x^{(m)}$ in the whole introduction section, to reduce the amount of notation to absorb. I hope that's fine by you.}}

%{\color{blue} I reworked this subsection.}
As the initializer of Algorithm \ref{algo:GPM}, the leading eigenvector of $C$ has several good properties necessary for analysis, and we will discuss them in depth in Section \ref{sec:eigen}. Theorems and lemmas in this direction are stated separately and proved first, because
%of their autonomous relevance, and because
their proof is illustrative of the techniques deployed to prove results about~\eqref{eq:P}.
Most notably, we prove a sharp $\ell_\infty$ perturbation bound for leading eigenvectors.
%which is of independent interest.
%The proof relies on a sharp bound for $\|x^0 - x^{0,m}\|_2$. %$d_2(\tilde{x}, \tilde{x}^{(m)})$.

\begin{theorem}\label{thm::mainEig}
If $\sigma = \mathcal{O}(\sqrt{n/\log n})$, then, with high probability for large $n$, a leading eigenvector $x^0$ of the data matrix $C$~\eqref{eq:C} scaled such that $\|x^0\|_2 = \sqrt{n}$ satisfies 
\begin{align}
	\| x^0 - z \|_{\infty} = \mathcal{O}( \sigma \sqrt{\log n / n}),
	\label{ineqn::mainEig}
\end{align}
where the global phase of $x^0$ is suitably chosen (e.g., such that $z^*x^0 = |z^*x^0|$.)
\end{theorem}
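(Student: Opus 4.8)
The plan is to exploit the auxiliary-problem machinery already advertised in the introduction, now specialized to eigenvectors rather than to solutions of~\eqref{eq:P}. For each $m \in [n]$, let $x^{0,m}$ be a leading eigenvector of $C^{(m)}$ scaled to $\|x^{0,m}\|_2 = \sqrt{n}$, with global phase chosen so that $(x^0)^* x^{0,m} = |(x^0)^* x^{0,m}|$. The key decomposition mirrors~\eqref{eq:inftyboundsplit}: writing $\lambda$ for the leading eigenvalue of $C$ and using $Cx^0 = \lambda x^0$,
\begin{align*}
	\lambda \, (x^0)_m = (Cx^0)_m = (zz^* x^0)_m + \sigma (Wx^0)_m = z_m (z^* x^0) + \sigma (Wx^0)_m,
\end{align*}
so that $(x^0)_m - z_m \cdot \tfrac{z^*x^0}{\lambda}$ equals $\tfrac{\sigma}{\lambda}(Wx^0)_m$. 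Thus, once I show (i) $\lambda = n + \mathcal{O}(\sqrt{n\log n})$ and $z^* x^0 = n + o(n)$ with high probability (standard: $\|W\|_2 = \mathcal{O}(\sqrt n)$ gives $|\lambda - n| \le \sigma\|W\|_2$ by Weyl, and Davis--Kahan plus the scaling give $d_2(x^0, z) = \mathcal{O}(\sigma)$, hence $|z^*x^0 - n| = \mathcal{O}(n^{-1} \cdot (\sigma\sqrt n)^2)$ after centering the phase), and (ii) $\|Wx^0\|_\infty = \mathcal{O}(\sqrt{n\log n})$, the bound~\eqref{ineqn::mainEig} follows, because then $(x^0)_m - z_m = z_m\big(\tfrac{z^*x^0}{\lambda} - 1\big) + \tfrac{\sigma}{\lambda}(Wx^0)_m$ and both terms are $\mathcal{O}(\sigma\sqrt{\log n/n})$ entrywise (the first is even $\mathcal{O}(\sigma^2/n)$, dominated once $\sigma = \mathcal{O}(\sqrt{n/\log n})$).

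Everything therefore reduces to bounding $\|Wx^0\|_\infty = \max_m |w_m^* x^0|$, which is exactly the eigenvector analogue of the difficulty described in the introduction: $w_m$ and $x^0$ are dependent. I split as in~\eqref{eq:inftyboundsplit}:
\begin{align*}
	|w_m^* x^0| \le |w_m^* x^{0,m}| + \|w_m\|_2 \, d_2(x^0, x^{0,m}).
\end{align*}
Since $C^{(m)}$ is independent of $w_m$ and $x^{0,m}$ is a deterministic function of $C^{(m)}$, the vector $x^{0,m}$ is independent of $w_m$; conditioning on $x^{0,m}$, the quantity $w_m^* x^{0,m}$ is a sum of at most $n$ independent complex Gaussians with variance controlled by $\|x^{0,m}\|_2^2 = n$ (the $m$th coordinate of $w_m$ is $0$), so $|w_m^* x^{0,m}| = \mathcal{O}(\sqrt{n\log n})$ with probability $1 - n^{-\Omega(1)}$, and a union bound over $m$ costs only the $\log n$ already present. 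Also $\|w_m\|_2 = \mathcal{O}(\sqrt n)$ uniformly in $m$ with high probability. Hence the whole argument hinges on the proximity estimate
\begin{align*}
	d_2(x^0, x^{0,m}) = \mathcal{O}(1) \quad \text{for all } m, \text{ with high probability,}
\end{align*}
and \textbf{this is the main obstacle.} A naive Davis--Kahan bound gives $d_2(x^0, x^{0,m}) \lesssim \sigma\|W - W^{(m)}\|_2 / \mathrm{gap}$, but $\|W - W^{(m)}\|_2$ is only $\mathcal{O}(\sqrt n)$ (it is a matrix supported on one row and column with Gaussian entries), which yields $d_2 = \mathcal{O}(\sigma)$ — too weak by a factor $\sigma$.

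To get the sharp $\mathcal{O}(1)$ bound I would set up a self-improving (bootstrap) argument on the quantity $\delta_m := d_2(x^0, x^{0,m})$, exactly paralleling what the paper does for GPM iterates. Start from the crude bound $\delta_m = \mathcal{O}(\sigma)$ for all $m$ (from Davis--Kahan); feed this into a refined eigenvector perturbation identity. Concretely, using $Cx^0 = \lambda x^0$ and $C^{(m)} x^{0,m} = \lambda^{(m)} x^{0,m}$ and subtracting, one obtains an expression for $x^0 - x^{0,m}$ (after phase alignment) in terms of $(C - \lambda \Id)^\dagger$ applied to $(C^{(m)} - C)x^{0,m} + (\lambda - \lambda^{(m)})x^{0,m}$, projected off the leading direction; the operator $(C-\lambda\Id)^\dagger$ restricted to the complement of $x^0$ has norm $\mathcal{O}(1/n)$ because the spectral gap of $C$ is $n - \mathcal{O}(\sqrt n)$. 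The crucial gain is that $(C^{(m)} - C)x^{0,m} = -\sigma\big(w_m (x^{0,m})_m + e_m (w_m^* x^{0,m}) - e_m (x^{0,m})_m \overline{W_{mm}}\big)$, i.e. it lives essentially in the two-dimensional span of $w_m$ and $e_m$: one then estimates its inner products against eigenvectors of $C$, where the coordinate $(x^{0,m})_m$ is $\mathcal{O}(\sigma\sqrt{\log n/n})$ (using the crude bound $\delta_m = \mathcal{O}(\sigma)$ together with $|z_m| = 1$ and the $\ell_\infty$ estimate being proved — this is where a careful induction/fixed-point framing across all $m$ simultaneously is needed) and $|w_m^* x^{0,m}| = \mathcal{O}(\sqrt{n\log n})$ from the independence above. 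Multiplying the $\mathcal{O}(1/n)$ operator norm by these gives $\delta_m = \mathcal{O}(\sigma\sqrt{\log n/n}\cdot \tfrac{1}{n}\cdot \sqrt n \cdot \sqrt n ) + \mathcal{O}(\tfrac1n \cdot \sqrt n \cdot \sqrt{n\log n}) $; the second, dominant contribution is $\mathcal{O}(\sqrt{\log n})$ — still slightly too weak, so one more pass is needed in which $|w_m^* x^{0,m}|$ is replaced by its genuine $\mathcal{O}(\sqrt{n\log n})$ bound against the \emph{full} eigenbasis (not just the top one), exploiting that the contribution of the top eigenvector is killed by the projection. I expect the final accounting to close at $\delta_m = \mathcal{O}(1)$, possibly after absorbing a harmless $\sqrt{\log n}$ by strengthening the noise regime constant, which is consistent with the hypothesis $\sigma = \mathcal{O}(\sqrt{n/\log n})$. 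The bookkeeping — propagating the $\ell_\infty$ bound and the proximity bound jointly through the $n$ auxiliary problems via a union bound and a simultaneous induction — is the genuinely delicate part, and is precisely the technique the paper flags as its main innovation.
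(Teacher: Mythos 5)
Your overall architecture coincides with the paper's: reduce \eqref{ineqn::mainEig} to showing $\|Wx^0\|_\infty = \mathcal{O}(\sqrt{n\log n})$ via the eigenvector equation, then split $|w_m^* x^0| \le |w_m^* x^{0,m}| + \|w_m\|_2\, d_2(x^0, x^{0,m})$ using the leave-one-out matrices $C^{(m)}$, exactly as in \eqref{ineqn::eigKey}. The genuine gap is that the one step everything hinges on---the proximity estimate for $d_2(x^0,x^{0,m})$---is never actually established. You dismiss Davis--Kahan as ``naive'' because you only consider the variant with $\|E\|_2$ in the numerator; but the variant stated as Lemma~\ref{lem::ptb}, with $\|Eu\|_2$ evaluated at the \emph{unperturbed} leading eigenvector $u = x^{0,m}$, closes the argument at once: taking $A = C^{(m)}$ and $E = \sigma\Delta W^{(m)}$, the numerator is $\sqrt{2}\,\sigma\|\Delta W^{(m)} x^{0,m}\|_2$, and since $\Delta W^{(m)}$ is independent of $x^{0,m}$ this is $\mathcal{O}\big(\sigma(\sqrt{n\log n} + \sqrt{n}\,|x^{0,m}_m|)\big)$ with high probability (Lemma~\ref{lem::conctr1}). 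The coordinate $|x^{0,m}_m|$ is bounded by an absolute constant with \emph{no} circularity: because the $m$th row of $W^{(m)}$ vanishes, the eigenvector equation gives $\lambda_1(C^{(m)})\, x^{0,m}_m = (z^* x^{0,m})z_m$, hence $|x^{0,m}_m| \le n/(n - \sigma\|W^{(m)}\|_2) < 8/7$. Dividing by the spectral gap ($\ge n - \mathcal{O}(\sigma\sqrt n)$) yields $d_2(x^0,x^{0,m}) = \mathcal{O}(\sigma\sqrt{\log n/n}) = \mathcal{O}(1)$, with no bootstrap, no joint induction over $m$, and no appeal to the $\ell_\infty$ bound being proved.

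By contrast, your resolvent/bootstrap sketch is both circular and miscounted. You bound $(x^{0,m})_m$ by ``the $\ell_\infty$ estimate being proved,'' which is precisely the self-reference a proof must avoid unless the fixed-point scheme is set up and closed explicitly---you flag this yourself and do not do it, and it is unnecessary given the constant bound above. Moreover, your accounting of the dominant term drops the factor $\sigma$ carried by the perturbation $C - C^{(m)} = \sigma\Delta W^{(m)}$ and inserts a spurious $\sqrt n$: the contribution of $e_m(w_m^* x^{0,m})$ is $\mathcal{O}\big(\tfrac{1}{n}\cdot \sigma\,|w_m^* x^{0,m}|\big) = \mathcal{O}(\sigma\sqrt{\log n/n})$, not $\mathcal{O}(\sqrt{\log n})$, so no ``second pass'' through the full eigenbasis is needed. (Incidentally, even $d_2(x^0,x^{0,m}) = \mathcal{O}(\sqrt{\log n})$ would have sufficed for $\|Wx^0\|_\infty = \mathcal{O}(\sqrt{n\log n})$, since it is multiplied by $\|w_m\|_2 = \mathcal{O}(\sqrt n)$---your target $\mathcal{O}(1)$ was stricter than necessary---but that does not rescue the derivation.) As written, the proposal ends with ``I expect the final accounting to close,'' leaving the crucial lemma unproved; a further minor slip is that $\lambda_1(C) = n + \mathcal{O}(\sigma\sqrt n)$, not $n + \mathcal{O}(\sqrt{n\log n})$, though the corresponding term in your decomposition is still $\mathcal{O}(\sigma/\sqrt n)$ and harmless.
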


The crux of the proof lies in a sharp bound on $d_2(x^0, x^{0,m})$.
%combined with the reasoning of~\eqref{eq:inftyboundsplit}. %% not enough: then also need to go from Wx_infty to bound on x-z infty; better to keep this simple.
This is obtained by using (a suitable version of) the Davis--Kahan theorem (Lemma~\ref{lem::ptb}): when $\sigma \ll \sqrt{n}$, 
%, $\| \tilde{x} - z \|_{2} = \mathcal{O}(\sigma)$ with high probability. Crucially, we have a much sharper bound for $d_2(\tilde{x}, \tilde{x}^{(m)})$: if $\sigma \ll \sqrt{n}$,  
\begin{align*}
	d_2(x^0, x^{0,m}) & = \mathcal{O} \left( \frac{\sigma \| (W - W^{(m)})  x^{0,m} \|_2}{n} \right) = \mathcal{O}(\sigma \sqrt{\log n/n}), \quad \text{whereas} \\
	d_2(x^0, z) & =  \mathcal{O} \left( \frac{\sigma \| W z \|_2 }{n} \right) = \mathcal{O}(\sigma). 
\end{align*}
To reach the first conclusion, we view $x^0$ as the perturbed version of $x^{0,m}$ due to perturbation $\sigma (W - W^{(m)})$. Note that $W - W^{(m)}$ has nonzero entries only in the $m$th row and $m$th column, and they are independent of $x^{0,m}$. Compared to the full perturbation $\sigma W$ which perturbs the eigenvector $z$ to $x^0$, the matrix $\sigma (W - W^{(m)})$ results in a much smaller $d_2$ distance between $x^{0,m}$ and $x^0$. Notice that, as will be detailed later, these results combined with the reasoning of~\eqref{eq:inftyboundsplit} imply that $x^0$ is in $\calN$, as desired.

Comparing Theorem~\ref{thm::mainEig} to Theorem~\ref{thm:mainbound} readily shows that the eigenvector $x^0$ is an excellent estimator for $z$ (up to the fact that its entries are not necessarily unit-modulus, which can be easily corrected---see Theorem \ref{thm::mainEig2}). 
%However, although the eigenvector estimator has the same statistical error bounds as the optimizer of SDP (\ref{eq:P}) (which is an MLE by Theorem \ref{thm:mainSDP}), non-rigorous arguments via statistical mechanics show it has a larger mean squared error \citep{javanmard2016phase}. 
Further efforts in this paper are dedicated to characterizing the performance and tractability of the MLE $\hat x$. %\TODO{@Nico: check if there is empirical evidence that the MLE is more desirable than the eigenvector estimator, other than \cite{javanmard2016phase}.}

%Some caveats: (i) Although Theorem \ref{thm::mainEig} establishes an $\ell^\infty$ bound as good as that in Theorem \ref{thm:mainbound}, the eigenvector estimator is not an MLE. We need more efforts to show SDP and Algorithm \ref{algo:GPM} solves (\ref{eq:P}) and establish the statistical bounds of $\hat{x}$. \TODO{any empirical results showing MLE is better?}
%(ii) The SDP approach does not require initialization, but analysis of the eigenvector estimator is helpful to study the SDP approach. -- @Joe: I understand why you included this and I'm happy to re-discuss it; my opinion for now is that pointing this out is potentially more confusing than helpful.

\subsection*{Analysis of iterations: tracking $n$ auxiliary sequences}
%{\color{blue} I reworked this subsection.}\\
While analyzing the eigenvector $x^0$ is relatively straightforward, the optima of~\eqref{eq:P} are more difficult to tame due to the unit-modulus constraints. As hinted above, the novel idea we develop in this paper is to track the sequences $\{x^{t,m}\}_{t=0}^\infty$ produced by Algorithm~\ref{algo:GPM} with inputs $C^{(m)}$~\eqref{eq:Cm} instead of $C$, for each $m \in [n]$. These auxiliary sequences---which only serve for the analysis and are not (and could not be) computed in practice---enjoy the crucial proximity property desired in the previous subsection---see Figure~\ref{fig:contraction}.

\begin{figure}[t]
	\centering
	\includegraphics[width=4in]{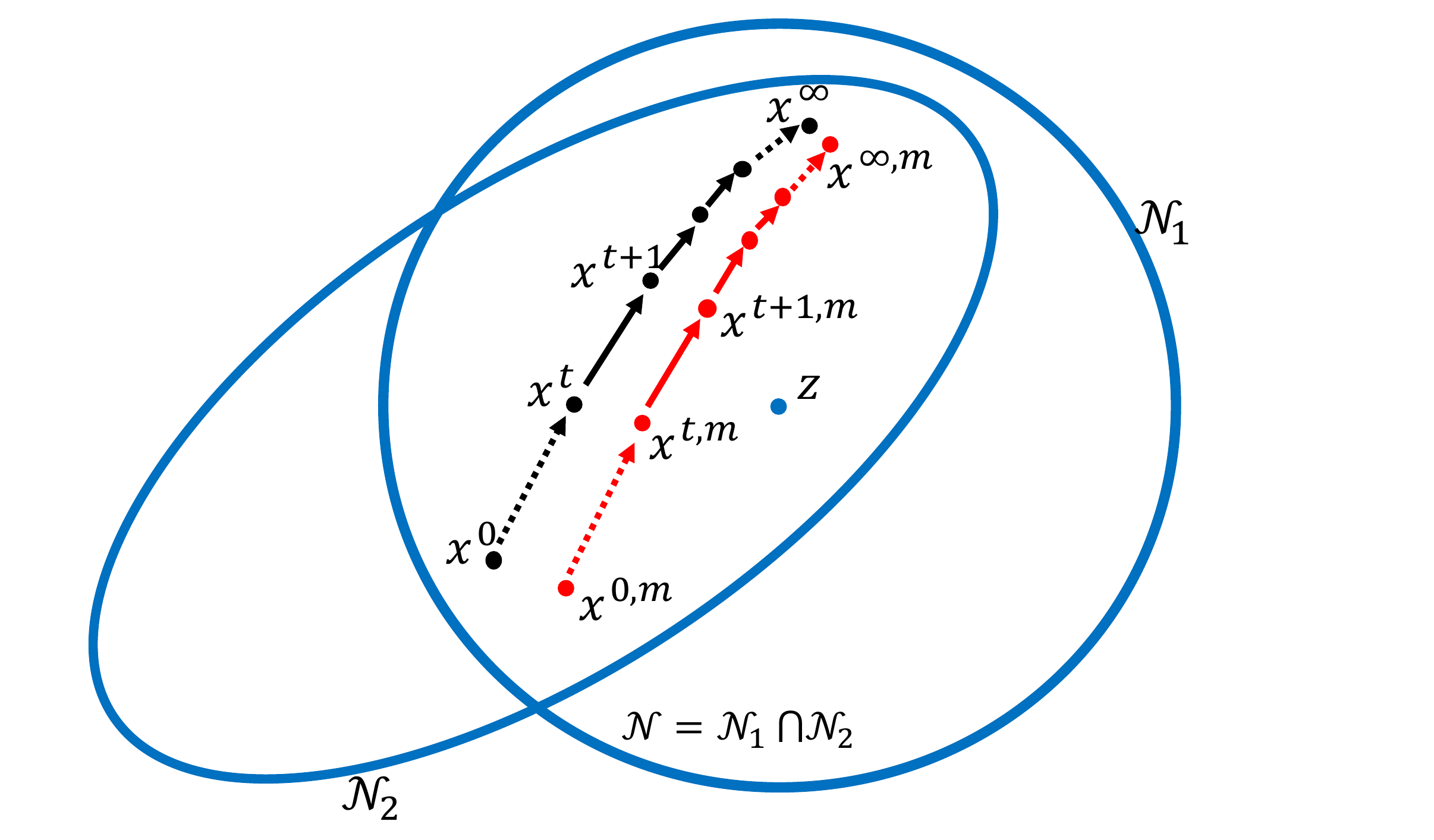}
	\caption{Sequence $\{x^t\}_{t=0}^\infty$ (in black) produced by Algorithm~\ref{algo:GPM}, and $n$ auxiliary sequences $\{x^{t,m}\}_{t=0}^\infty$ (in red) produced (conceptually) by Algorithm~\ref{algo:GPM} with modified inputs. Crucial properties along the paths: (i) proximity: $x^t$ and $x^{t,m}$ stay close; (ii) local contraction: $x^t$ and $x^{t,m}$ remain in the contraction region $\calN$ with high probability and converge in it.}
	\label{fig:contraction}
\end{figure}

%\TODO{I like the flow of ideas in the rest of the subsection, but I am not convinced by the writing yet. Specifically, I believe that the current exposition will sound mysterious to many readers, and I am convinced we can do better without sacrificing conciseness (at least, not substantially.) We should think about it together. } {\color{blue} I rewrote most of the stuff in the subsection} -- @joe: Thanks!

Indeed, we will show by induction that there exist absolute constants $\kappa_1, \kappa_2, \kappa_3$ such that, for all $m$ and for $t = 0, 1, 2, \ldots$, 
\begin{samepage}
\begin{enumerate}
	\item $d_2(x^{t} , x^{t,m}) \leq \kappa_1$, \qquad \qquad \qquad \qquad \qquad \qquad ~~ proximity property
	\item \ntikzmark{L}{$x^t \in \calN_1 = \{ x\in \mathbb{C}^n: \|Wx\|_\infty \leq \kappa_2 \sqrt{n\log n}\}, $ }
	\item \ntikzmark{O}{$x^t \in \calN_2 = \{ x\in \mathbb{C}^n: d_2(x,z) \leq \kappa_3 \sqrt{n} \}$.}
	\makebrace{L}{O}{contraction region}
\end{enumerate}
\end{samepage}
The proximity and local contraction properties\footnote{Formally, we only prove $x^t \in \calN$; but proximity implies $x^{t,m}$ is also in a (slightly larger) contraction region, with different constants $\kappa_2, \kappa _3$.} are both crucial and complementary for the analysis: the proximity property allows to control $\ell_\infty$ quantities in the presence of the random matrix $W$ despite statistical dependence (as shown in~\eqref{eq:inftyboundsplit}), and the local contraction property is used to establish~\eqref{recurProx} below, making sure $d_2(x^{t} , x^{t,m})$ remains small. 

For the high-level idea, consider the nonlinear operators $\calT$ and $\calT^{(m)}$ implicitly defined by Algorithm~\ref{algo:GPM} so that $x^{t+1} = \mathcal{T} x^t$ and $x^{t+1,m} = \mathcal{T}^{(m)} x^{t,m}$. If we can show that $\calT$ is $\rho$-Lipschitz with constant $\rho\in(0,1)$ with respect to $d_2$, then a recursive error bound follows:
%The benefit of introducing and tracking the auxiliary sequences $\{x^{t,m}\}_{t=1}^\infty$ is evident in the following recursive error bound:
\begin{align}
	d_2(x^{t+1}, x^{t+1,m}) & = d_2(\calT x^{t}, \calT^{(m)} x^{t,m}) \nonumber\\
	 & \le d_2(\calT x^{t}, \calT x^{t,m}) + d_2(\calT x^{t, m}, \calT^{(m)} x^{t,m}) \nonumber\\
%	 & \le \rho \cdot d_2(\mathcal{T}x^{t}, \mathcal{T} x^{t,m}) + d_2(\mathcal{T}x^{t,m}, \mathcal{T}^{(m)} x^{t,m}), \nonumber\\
	&\le \rho \cdot d_2(x^{t}, x^{t,m}) + \textrm{discrepancy error}.% \qquad \rho \in (0,1).
	\label{recurProx}
\end{align}
This ensures $d_2(x^{t} , x^{t,m})$ does not accumulate with $t$, provided the discrepancy error---which is caused by the difference between $\mathcal{T}$ and $\mathcal{T}^{(m)}$---is small enough.
%The only difference is that, before applying the entrywise $\phase(\cdot)$ function, the input $C$ is replaced by $C^{(m)}$, differing only in the $m$th row and column.
This is assured with high probability, because $C - C^{(m)}$ is independent of $x^{t,m}$, causing the discrepancy error to be $\mathcal{O}(\sigma\sqrt{\log n / n})$---considerably smaller than $d_2(x^{t+1}, z) = \mathcal{O}(\sigma)$. In spirit, this is the same argument as in the analysis of the eigenvector estimator.
%, and it further demonstrates the advantage of introducing auxiliary problems and sequences.
%This is because, intuitively speaking, $x^{t+1}$ is a small perturbation of $x^{t+1,m}$, with the only difference being the $m$'th and $m$'th row of the input matrix, whereas $x^t$ is a perturbation of $z$ caused by a full matrix $\sigma W$.

The above 
%If we can show that (\ref{recurProx}) holds for all $t$, then conceptually it is easy to prove proximity of $x^t$ and $x^{t,m}$ all the way up to the limit, as desired. However, the 
recursive error bound hinges on the other important property, that is, $x^t$ staying in the contraction region. Crucially, to establish $x^t \in \calN_1$, we need a tight bound on $\| Wx^t \|_\infty$. Fortunately, we have seen how to control this quantity in~\eqref{eq:inftyboundsplit}: for any $m \in [n]$, 
\begin{equation}
	|(W x^t)_m| \le |w_m^* x^{t,m}| + \|w_m\|_2 \cdot  d_2(x^t , x^{t,m}). \label{eq:inftyboundsplit2}
\end{equation}
This, in turn, requires a proximity result for $d_2(x^t , x^{t,m})$. This insight naturally motivates an analysis of each iteration by induction.
%Formally, we need to find absolute constants $\kappa_1, \kappa_2, \kappa_3$ such that, for $t = 0, 1, 2, \ldots$,
%\begin{enumerate}
%	\item $d_2(x^{t} , x^{t,m}) \leq \kappa_1$, \qquad \qquad \qquad \qquad \qquad \qquad ~~ proximity property
%	\item \ntikzmark{L}{$x^t \in \calN_1 := \{ x\in \mathbb{C}^n: \|Wx\|_\infty \leq \kappa_2 \sqrt{n\log n}\}, $ }
%	\item \ntikzmark{O}{$x^t \in \calN_2 := \{ x\in \mathbb{C}^n: d_2(x,z) \leq \kappa_3 \sqrt{n} \}$.}
%\end{enumerate}
%\makebrace{L}{O}{contraction region}
%The proximity property and contraction property are both crucial and integral for analysis: the proximity property is used to tackle $\ell^\infty$ bound with random matrix $W$, and the contraction property is used to establish (\ref{recurProx}) and make sure $d_2(x^{t} , x^{t,m})$ does not accumulate. 

There are two technical issues we briefly address before ending this introduction with a remark.

%(i)~
The first issue concerns the probabilistic argument in the proof. In (\ref{recurProx}) and (\ref{eq:inftyboundsplit2}), we invoke concentration inequalities to obtain tight bounds. However, since there is a (small) probability that such inequalities fail, we cannot use union bounds for $t=1,2,\ldots$, which is infinite. To overcome this obstacle, we use concentration inequalities only for the first $T = \mathcal{O}(n^2)$ iterations, and resort to a deterministic analysis for iterations $t > T$. The critical observation is that, $d_2(x^{t+1}, x^t)$ decays exponentially for $t \le T$ due to contraction, so the amount of update is tiny after $T$ iterations. Using another inductive argument, we can secure exponential decay for $t > T$ as well. The rationale is that we already established good properties about $x^T$, and $d_2(x^t, x^T)$ is tiny for $t > T$, so we can easily relate $x^t$ to $x^T$ and show $x^t$ also has good properties. Essentially, $x^t$ remains in a contraction region with slightly larger constants.
%The rationale is that the amount of increment $d_2(x^{t+1}, x^{t})$ is exponential decreasing, so for $t>T$, $x^t$ will still stay in a contraction region and all the desired properties hold. This analysis uses another inductive argument.

%(ii)~
The second issue is identifying the limit $x^\infty = \lim_{t\to \infty} x^t$ with a solution of~\eqref{eq:P}. %, namely, $\hat x$.
We will verify the optimality and uniqueness (up to phase) of $x^\infty$
%in part via the necessary optimality conditions of~\eqref{eq:P} and
via
a known dual optimality certificate $S = \Re\{\ddiag(Cx^\infty (x^\infty)^*)\} - C$ \citep{bandeira2014tightness}.% (\TODO{no $\Re$ needed?} -- @Joe: yes. Added.)
%The important observation is that the limit $x^\infty$ has the \textit{fix-point} property, i.e.\ $\phase(Cx^\infty) = x^\infty$; and moreover, the moduli of $Cx^\infty$ are the dual variables of ~\eqref{eq:P}. Since we can establish good properties in the iterative process, it is easy to verify the optimality of $x^\infty$. 

%(iii)~
We close with a remark about the initializer $x^0$ (and $x^{0,m}$). Algorithm~\ref{algo:GPM} uses the leading eigenvector of $C$ for initialization, and our analysis relies on $\ell_\infty$ perturbation bounds to verify the base case of the induction. However, we point out that, even in the absence of such perturbation results, we could set $x^0 = x^{0,m} = z$ in theory, deduce that $\hat x \in \calN$ and thus prove Theorem~\ref{thm:mainSDP} about the SDP relaxation. In other words, even if we leave out the discussion about the initializer altogether, there is still enough material to secure tightness of the SDP relaxation. The proof augmented with the analysis of the eigenvector perturbation has the advantage of also providing a statement about Algorithm~\ref{algo:GPM} which is actually runnable in practice.

\section{Main results}\label{sec:mainres}

In this section we will state our main theorems formally.
%, and have detailed discussions.
The assumption on random noise $W$ will also be relaxed to a broader class of random matrices. To begin with, let us first clarify the ``up to phase'' statements in Section~\ref{sec:intro}.

\paragraph*{The quotient space}
For any $\theta \in \mathbb{R}$, whether the true phases are $z$ or $ze^{i\theta}$ does not affect the measurements $C$~\eqref{eq:C}. As a result, the available data are insufficient to distinguish $z$ from $ze^{i\theta}$. Clearly the program~\eqref{eq:P} is invariant to global phase shifts as well. It thus makes sense to ignore the global phase in defining distances between estimators. A reasonable notion of $\ell_2$ error then becomes
\begin{align}
	d_2(x, y) & = \min_{\theta\in\reals} \| xe^{i\theta} - y \|_2 = \sqrt{\|x\|_2^2 + \|y\|_2^2-2|x^*y|},
	\label{eq:dtwo}
\end{align}
where the optimal phase $e^{i \theta}$ is the phase (Arg) of $x^* y$. 
Similarly, a notion of $\ell_\infty$ error can be defined:
\begin{align}
	d_\infty(x, z) & =   \min_{\theta\in\reals} \| xe^{i\theta} - z \|_\infty.
	\label{eq:dinf}
\end{align}
Formally, one can partition all points in $\mathbb{C}^n$ into equivalence classes via the equivalence relation $\sim$~\eqref{eq:sim}. The resulting quotient space $\Cn /\! \sim$ contains the equivalence classes $[x] = \{ xe^{i\theta} : \theta \in \reals \}$ for all $x \in \Cn$. Specifically, the feasible set of~\eqref{eq:P},
\begin{equation}\label{def:Cn1}
\Cn_1 := \{ x \in \Cn : |x_1| = \cdots = |x_n| = 1 \},
\end{equation} 
reduces to $\Cn_1 / \! \sim$ under this equivalence relation. It is easily verified that $d_2$ defines a distance on $\Cn_1 /\! \sim$. In particular, it satisfies the triangular inequality (where $d_2(x, y)$ is understood to mean $d_2([x], [y])$):
\begin{align*}
	\forall x,y,z \in \Cn_1, \quad d_2(x, y) & = \min_{\theta_1, \theta_2} \|xe^{i\theta_1} - z + z - ye^{i\theta_2}\|_2 \\ & \leq \min_{\theta_1} \|xe^{i\theta_1} - z\|_2 + \min_{\theta_2} \|z - ye^{i\theta_2}\|_2 = d_2(x, z) + d_2(z, y).
\end{align*}
Moreover, $\Cn_1 / \! \sim$ is a complete metric space under $d_2$ (see Theorem \ref{thm::finCnvg}). Similarly, $d_\infty$ is also a distance.
As will be shown, the sequence $\{x^t\}_{t=1}^\infty$ described in Section \ref{sec:intro} satisfies the local contraction property (see~\eqref{recurProx}) on the metric space $(\Cn_1 /\! \sim, d_2)$, hence converges to a fixed point which is exactly $\hat{x}$ (understood as $[\hat{x}]$).

\paragraph*{The noise matrix} In Section \ref{sec:intro} we assume that $W$ has independent standard complex Gaussian variables above its diagonal. However, this restricted assumption is only for expository convenience, and can be relaxed to the class of Hermitian Wigner matrices with sub-gaussian entries. Statements about Algorithm~\ref{algo:GPM} and about tightness of the SDP relaxation continue to hold, although of course the solution of~\eqref{eq:P} now no longer necessarily corresponds to the MLE.

The class of sub-gaussian variables subsumes Gaussian variables, but has one defining feature similar to Gaussian variables, that is, the tail probability decaying no slower than Gaussian variables. In our model, each entry of $W$ satisfies the tail bound
\begin{equation}\label{eq:subdef}
\mathbb{P}( | \xi | > t ) \le \exp( 1 - t^2/K^2)
\end{equation}
for both real and imaginary parts, where $K > 0$ is an absolute constant. Formally, we assume the Hermitian matrix $W$ satisfies the following: $\{ \textrm{Re}(W_{k\ell}), \textrm{Im}(W_{k\ell}) \}_{k < \ell}$ are jointly independent, have zero mean, and satisfy the sub-gaussian tail bound (\ref{eq:subdef}); the diagonal elements are zero, and $W_{\ell k} = \overline{W_{k \ell}}$ for any $k < \ell$. Note there are equivalent definitions of sub-gaussian variables (up to constants) \citep{Ver10}.

This random model is a much richer class of noise matrices, containing the Gaussian model introduced in Section \ref{sec:intro} as a special case. Each random variable in $\{ \textrm{Re}(W_{k\ell}), \textrm{Im}(W_{k\ell}) \}_{k \le \ell}$ can be, for example, a symmetric Bernoulli variable, any other centered and bounded variable, or simply zero.

\paragraph*{SDP approach}

The SDP approach tries to solve~(\ref{eq:P}) via its convex semidefinite relaxation~(\ref{eq:SDP}).
It is a relaxation of~\eqref{eq:P} in the following sense. For any feasible $x\in\Cn$, the corresponding matrix $X = xx^*$ is feasible for~\eqref{eq:SDP}. Likewise, any feasible matrix $X$ of rank~1 can be factored as $X = xx^*$ such that $x$ is feasible for~\eqref{eq:P}. Thus, the relaxation consists in allowing solutions of rank more than 1 in~\eqref{eq:SDP}. Consequently, if~\eqref{eq:SDP} admits a solution of rank~1, $X = \hat x \hat x^*$, then the corresponding $\hat x$ is a global optimum for~\eqref{eq:P}. Furthermore, if the rank-1 solution of~\eqref{eq:SDP} is unique, it can be recovered in polynomial time. For this reason, the regime of interest is one where~\eqref{eq:SDP} admits a unique solution of rank~1.

The following theorem---a statement of Theorem~\ref{thm:mainSDP} which holds in the broadened noise model---closes the gap in previous papers \citep{bandeira2014open, bandeira2014tightness, liu2016statistical, boumal2016nonconvexphase}.
\begin{theorem}\label{thm:mainSDP2}
	There exists an absolute constant $c_0>0$ such that, if $\sigma \le c_0\sqrt{n / \log n}$, then, with probability $1- \mathcal{O}(n^{-2})$, the semidefinite program~\eqref{eq:SDP} admits a unique solution $\hat x \hat x^*$ where $\hat x$ is the unique global optimum of~\eqref{eq:P} up to phase.
\end{theorem}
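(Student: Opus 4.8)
The plan is to reduce the claim entirely to the deterministic sufficient condition~\eqref{eq:originalSDPanalysis} from~\citep{bandeira2014tightness}, which states that the relaxation~\eqref{eq:SDP} is tight with $\hat x\hat x^*$ its unique solution (and $\hat x$ the unique optimum of~\eqref{eq:P} up to phase) whenever
\[
	n - 216\sigma^2 - 3\sigma\sqrt{n} - \sigma\|W\hat x\|_\infty > 0.
\]
Since $\sigma \le c_0\sqrt{n/\log n}$ makes the terms $216\sigma^2$ and $3\sigma\sqrt n$ both $o(n)$ (indeed $\mathcal{O}(n/\log n)$ and $\mathcal{O}(n/\sqrt{\log n})$ respectively), everything hinges on establishing the high-probability bound
\[
	\|W\hat x\|_\infty \le \kappa_2\sqrt{n\log n}
\]
for a suitable absolute constant $\kappa_2$; then $\sigma\|W\hat x\|_\infty = \mathcal{O}(n\sqrt{\log n}/\sqrt{\log n}) = \mathcal{O}(n)$ with a controllable constant, and choosing $c_0$ small enough makes the left-hand side strictly positive. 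So the real content is the bound on $\|W\hat x\|_\infty$.

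To get that bound, I would invoke the full machinery developed in the body of the paper, following the four-step roadmap in the introduction. First, show $x^0\in\calN$ (Theorem~\ref{thm::mainEig2}), using the $\ell_\infty$ eigenvector perturbation bound obtained by comparing $x^0$ to the auxiliary eigenvectors $x^{0,m}$ via Davis--Kahan. Second, run the inductive argument (Theorems~\ref{thm::indct} and~\ref{thm::finCnvg}) proving that every GPM iterate $x^t$ stays in $\calN = \calN_1\cap\calN_2$, maintaining simultaneously the proximity bound $d_2(x^t,x^{t,m})\le\kappa_1$ for all $m$ and the membership $x^t\in\calN_1\cap\calN_2$; the proximity bound plus~\eqref{eq:inftyboundsplit2} is what delivers $x^t\in\calN_1$ despite the dependence between $W$ and $x^t$. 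Third, invoke the local contraction property (Lemma~\ref{lem::ctrT}): $\calT$ is $\rho$-Lipschitz with $\rho<1$ on the iterates with respect to $d_2$, so by completeness of $(\Cn_1/\!\sim,d_2)$ the sequence $x^t$ converges to some $x^\infty\in\calN$. Fourth, identify $x^\infty$ with $\hat x$: by Lemma~\ref{lem:certificate}, any fixed point of $\calT$ in $\calN$ is a global optimum of~\eqref{eq:P}, certified (and shown unique up to phase) by the dual certificate $S = \Re\{\ddiag(Cx^\infty(x^\infty)^*)\} - C$. Since $x^\infty\in\calN\subset\calN_1$, we conclude $\hat x := x^\infty$ satisfies $\|W\hat x\|_\infty\le\kappa_2\sqrt{n\log n}$, as needed.

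Two bookkeeping points need attention. The probability accounting must be handled as described in the introduction: all the concentration inequalities are applied only over the first $T = \mathcal{O}(n^2)$ iterations, contributing a union bound of size $\mathcal{O}(n)\cdot\mathcal{O}(n^2)$ times an exponentially small per-event failure probability (from the sub-gaussian tail~\eqref{eq:subdef} and standard Wigner-matrix operator-norm bounds), which I would tune so the total failure probability is $\mathcal{O}(n^{-2})$; for $t>T$ the analysis is deterministic because $d_2(x^{t+1},x^t)$ decays geometrically, so $x^t$ stays in a slightly enlarged contraction region. Also, the noise model must be the broadened sub-gaussian Wigner class rather than just complex Gaussian, so each ingredient I cite (operator-norm concentration, $\|w_m\|_2 = \mathcal{O}(\sqrt n)$, and the "if $W,\hat x$ were independent" heuristic that bounds $|w_m^*x^{t,m}|$ by $\mathcal{O}(\sqrt{n\log n})$) must be the sub-gaussian versions, which the paper sets up in Section~\ref{sec:eigen}.

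The main obstacle is step two: proving the inductive invariant that $x^t$ stays in $\calN$ while the proximity bound $d_2(x^t,x^{t,m})\le\kappa_1$ is preserved for every $m$ simultaneously. The difficulty is circular---controlling $\|Wx^t\|_\infty$ (i.e., $x^t\in\calN_1$) requires the proximity bound via~\eqref{eq:inftyboundsplit2}, while propagating the proximity bound via the recursion~\eqref{recurProx} requires knowing that both $x^t$ and $x^{t,m}$ already lie in the region where $\calT$ contracts and where the discrepancy error between $\calT$ and $\calT^{(m)}$ is small. Threading this simultaneous induction---and in particular establishing the Lipschitz constant $\rho<1$ of $\calP(C\,\cdot)$ on $\calN$, which requires a lower bound on $|(Cx)_k|$ keeping the normalization map $\calP$ well-behaved---is where the bulk of the technical work lies; everything in the present theorem is a corollary once that is in hand.
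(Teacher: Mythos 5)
Your plan matches the paper's own proof: the paper likewise funnels everything through the machinery of Theorems~\ref{thm::indct}--\ref{thm::finCnvg} to place the GPM limit $x^\infty$ in $\calN$ (so $\|Wx^\infty\|_\infty \lesssim \sqrt{n\log n}$ and $d_2(x^\infty,z)\lesssim\sqrt n$) and then verifies the dual certificate $S(x^\infty (x^\infty)^*)\succeq 0$ with rank $n-1$ via Lemma~\ref{lem:certificate}. The only cosmetic difference is that the paper redoes the certificate computation $u^*Su>0$ explicitly with sub-gaussian constants (using the fixed-point identity~\eqref{eqn::eigDual} and~\eqref{ineqn::Wyinf}) rather than citing the Gaussian-specific inequality~\eqref{eq:originalSDPanalysis} as a black box, which is exactly the sub-gaussian adaptation you already flag.
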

Note that the exponent~2 in the failure probability $\mathcal{O}(n^{-2})$ can be replaced by any positive numerical constant, only affecting other absolute constants in the theorem (and all other theorems).

\paragraph*{GPM approach}

The generalized power method (Algorithm \ref{algo:GPM}) is an iterative algorithm similar to the classical power method, but instead of projecting vectors onto a sphere after matrix-vector multiplication $Cx^{t-1}$, it extracts the phases from $Cx^{t-1}$, which is an entry-wise projection.
%Indeed, for each entry of a vector, the phase function projects any complex number onto the unit circle $\mathbb{C}_1^1 = \{ x \in \mathbb{C}: |x| = 1\}$.
It is much faster than SDP, and converges linearly to a limit, which is the optimum $\hat{x}$ up to phase (optimality is stated in Theorem \ref{thm:mainbound2}). The next theorem is a precise version of Theorem~\ref{thm:mainGPM}.
\begin{theorem}\label{thm:mainGPM2}
There exists an absolute constant $c_0>0$ such that, if $\sigma \le c_0\sqrt{n / \log n},$ then, with probability $1- \mathcal{O}(n^{-2})$, the sequence $\{ x^t \}$ produced by Algorithm \ref{algo:GPM} has a linear convergence rate to some $\hat x \in \Cn_1$ (up to phase):
\begin{equation*}
d_2( x^t , \hat{x} ) \le 2^{2-t} \sqrt{n}, \qquad t = 0, 1, 2\ldots
\end{equation*}
\end{theorem}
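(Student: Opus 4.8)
The plan is to assemble the four ingredients laid out in the roadmap of Section~\ref{sec:intro}, now in the broadened noise model. Throughout I would fix, for each $t$, a global phase for $x^t$ (and, in the auxiliary analysis, for each $x^{t,m}$) so that consecutive iterates are phase-aligned; this makes $d_2$ behave like an honest distance along the orbit and lets the triangle inequality for $d_2$ be applied freely. The target rate $d_2(x^t,\hat x)\le 2^{2-t}\sqrt n$ will come out of a plain geometric estimate once all iterates are trapped in the contraction region and $\calT$ is shown to be a contraction there.

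\textbf{Step 1: all iterates stay in $\calN=\calN_1\cap\calN_2$ with probability $1-\mathcal{O}(n^{-2})$.} The base case $x^0\in\calN$ is Theorem~\ref{thm::mainEig2}: the (rescaled) leading eigenvector is $\mathcal{O}(\sigma\sqrt{\log n/n})$-close to $z$ in $\ell_\infty$, which places it in $\calN_2$ \eqref{eq:Ntwo} and, after multiplication by $W$, in $\calN_1$ \eqref{eq:None}. For the inductive step $x^t\in\calN\Rightarrow x^{t+1}\in\calN$ I would invoke Theorems~\ref{thm::indct} and~\ref{thm::finCnvg}, treating small $t$ and large $t$ separately. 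This is exactly where the $n$ auxiliary sequences are needed: to certify $x^{t+1}\in\calN_1$, i.e. $\|Wx^{t+1}\|_\infty\le\kappa_2\sqrt{n\log n}$, one uses the split \eqref{eq:inftyboundsplit2}, bounding $|w_m^*x^{t+1,m}|=\mathcal{O}(\sqrt{n\log n})$ by independence of $w_m$ and $x^{t+1,m}$, and bounding $\|w_m\|_2\cdot d_2(x^{t+1},x^{t+1,m})$ via the proximity estimate $d_2(x^{t+1},x^{t+1,m})\le\kappa_1$, itself propagated by the recursion \eqref{recurProx} with a small discrepancy error because $C-C^{(m)}$ is independent of $x^{t,m}$. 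The probabilistic subtlety — that one cannot union bound over the infinitely many $t$ — is handled by applying concentration only for the first $T=\mathcal{O}(n^2)$ iterations and running a deterministic continuation for $t>T$, exploiting that by contraction the updates $d_2(x^{t+1},x^t)$ have already decayed geometrically, so $x^t$ remains in a slightly enlarged $\calN$.

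\textbf{Step 2: contraction, convergence, limit, rate.} With every iterate confined to $\calN$, Lemma~\ref{lem::ctrT} gives that $\calT$ is $\rho$-Lipschitz on $\calN$ with respect to $d_2$ for some absolute $\rho\in(0,1)$, which we may take as small as we wish (say $\rho\le\tfrac12$) by shrinking $c_0$. Hence $d_2(x^{t+1},x^t)=d_2(\calT x^t,\calT x^{t-1})\le\rho\,d_2(x^t,x^{t-1})$, so $\{x^t\}$ is Cauchy in $(\Cn_1/\!\sim,\,d_2)$. Since this space is complete (Theorem~\ref{thm::finCnvg}) and $\calN$ is closed, $x^t\to x^\infty\in\calN$, and by continuity of $\calT$ on $\calN$ the limit is a fixed point of $\calT$; Lemma~\ref{lem:certificate} then identifies $x^\infty$ with the global optimum $\hat x$ of \eqref{eq:P}, unique up to phase. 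For the rate, since $\hat x\in\calN$, $d_2(x^t,\hat x)=d_2(\calT x^{t-1},\calT\hat x)\le\rho\,d_2(x^{t-1},\hat x)$, so $d_2(x^t,\hat x)\le\rho^t d_2(x^0,\hat x)\le 2^{-t}\sqrt{2n}\le 2^{2-t}\sqrt n$, using the trivial bound $d_2(x,y)^2=\|x\|_2^2+\|y\|_2^2-2|x^*y|\le 2n$ valid for all $x,y\in\Cn_1$.

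\textbf{Main obstacle.} The crux is Step~1, and within it keeping $x^t$ inside $\calN_1$: controlling the $\ell_\infty$ quantity $\|Wx^t\|_\infty$ despite the intricate dependence of $x^t$ on $W$. This is what forces the simultaneous induction on all $n+1$ sequences and the careful bookkeeping of the proximity constants $\kappa_1,\kappa_2,\kappa_3$; the Lipschitz estimate for $\calT$ (Lemma~\ref{lem::ctrT}) and the finite-$T$ / deterministic-tail dichotomy are the other delicate pieces, but they are comparatively self-contained once the proximity property is in hand.
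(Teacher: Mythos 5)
Your proposal is correct and follows essentially the same route as the paper: eigenvector initialization in $\calN$ via Theorem~\ref{thm::mainEig2}, the induction over the $n+1$ sequences with the finite-$T$ concentration / deterministic-tail split (Theorems~\ref{thm::indct}, \ref{thm::geoDcr} and~\ref{thm::finCnvg}), and the local contraction property of $\calT$ (Lemma~\ref{lem::ctrT}) together with completeness of $(\Cn_1/\!\sim,d_2)$. The only cosmetic difference is the final rate computation: the paper telescopes the consecutive increments $d_2(x^{k+1},x^k)\le 2^{1-k}\sqrt{n}$ and sums the geometric series, whereas you contract the distance to the fixed point directly, which works equally well provided you use $\rho\le 1/2$ on the slightly enlarged region containing $x^\infty$ (note that $\rho$ cannot be made arbitrarily small by shrinking $c_0$, since the $6\varepsilon$ term in Lemma~\ref{lem::ctrT} is tied to the fixed constant $\kappa_3$, but $\rho\le 1/2$ is all you need and it holds with the paper's constants).
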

The proof is based on induction: in each iteration, we will establish the proximity property and the contraction property for $x^t$. The proof is simply a rigorous justification of the heuristics we discussed in Section \ref{sec:intro}. It also leads to $\ell_2$ and $\ell_\infty$ error bounds for $\hat{x}$. The following theorem is a precise version of Theorem \ref{thm:mainbound}.

\begin{theorem}\label{thm:mainbound2} %\TODO{duplicate theorem label}
Under the same conditions and notation as in Theorem \ref{thm:mainGPM2}, with probability $1- \mathcal{O}(n^{-2})$, $\hat{x}$ is the unique optimum of~\eqref{eq:P} up to phase. If we choose the global phase of $\hat x$ such that
%$\langle z, e^{i \theta} \hat{x} \rangle = | \langle z, \hat{x} \rangle |$, and
$\hat x^* z = |\hat x^* z|$, then
\begin{align*}
	d_2( \hat{x} , z ) & = \|\hat{x} - z \|_2 \le C_0 \sigma, \textrm{ and } & d_{\infty}( \hat{x} , z ) & \le \|\hat{x} - z \|_\infty \le C_0 \sigma \sqrt{\log n/n},
%	d_2( \hat{x} , z ) & = \|e^{i \theta} \hat{x} - z \|_2 \le C_0 \sigma, \textrm{ and } & d_{\infty}( \hat{x} , z ) & \le \|e^{i \theta} \hat{x} - z \|_\infty \le C_0 \sigma \sqrt{\log n/n},
\end{align*}
where $C_0 > 0$ is an absolute constant.
\end{theorem}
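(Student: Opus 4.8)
The plan is to harvest the infrastructure already built for Theorem~\ref{thm:mainGPM2} and then add only an elementary phase-perturbation argument for the $\ell_\infty$ bound. First I would work on the event of probability $1-\mathcal{O}(n^{-2})$ carrying the conclusion of Theorem~\ref{thm:mainGPM2}, intersected with $\{\|W\|_2\le 3\sqrt n\}$ (which for sub-gaussian Wigner matrices holds with probability at least $1-\mathcal{O}(n^{-2})$, so the intersection is again $1-\mathcal{O}(n^{-2})$). On this event the iterates of Algorithm~\ref{algo:GPM} converge in $d_2$ to $\hat x\in\Cn_1$, every iterate lies in $\calN=\calN_1\cap\calN_2$, and $\hat x$ is a fixed point of $\calT$, i.e.\ $\hat x=\mathcal{P}(C\hat x)$ (Lemma~\ref{lem::ctrT}). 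Since $\calN_1$ and $\calN_2$ are closed and invariant under global phase shifts --- because $\|W(xe^{i\theta})\|_\infty=\|Wx\|_\infty$ and $d_2(xe^{i\theta},z)=d_2(x,z)$ --- they descend to closed subsets of the quotient, and hence the $d_2$-limit $\hat x$ again belongs to $\calN$. In particular $\|W\hat x\|_\infty\le\kappa_2\sqrt{n\log n}$ and $d_2(\hat x,z)\le\kappa_3\sqrt n$.

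For optimality and uniqueness I would feed $\hat x$ into the dual certificate $S=\Re\{\ddiag(C\hat x\hat x^*)\}-C$ of \citep{bandeira2014tightness}: the fixed-point relation gives $S\hat x=0$, and by Lemma~\ref{lem:certificate} it remains only to check that $S\succeq0$ with $\rank S=n-1$, which, as recalled around \eqref{eq:originalSDPanalysis}, holds as soon as
\[
  n-216\sigma^2-3\sigma\sqrt n-\sigma\|W\hat x\|_\infty>0.
\]
Substituting $\|W\hat x\|_\infty\le\kappa_2\sqrt{n\log n}$ and $\sigma\le c_0\sqrt{n/\log n}$, the left-hand side is at least $n\bigl(1-216c_0^2/\log n-3c_0/\sqrt{\log n}-\kappa_2 c_0\bigr)$, which is positive for all large $n$ provided $c_0$ is chosen small enough. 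Hence \eqref{eq:SDP} admits $\hat x\hat x^*$ as its unique solution and $\hat x$ is the unique optimum of \eqref{eq:P} up to phase.

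Now that $\hat x$ is known to be a global optimum of \eqref{eq:P}, the purely deterministic estimate of Theorem~\ref{thm:mainbound} applies with the normalization $\hat x^*z=|\hat x^*z|$ (equivalently $z^*\hat x=|z^*\hat x|$), giving $\|\hat x-z\|_2\le 4\sigma\|W\|_2/\sqrt n\le 12\sigma$; and since $\|\hat x\|_2=\|z\|_2=\sqrt n$ with $\sigma=o(\sqrt n)$, the identity $\|\hat x-z\|_2^2=2n-2|\hat x^*z|$ forces $\lambda:=|\hat x^*z|\ge n-72\sigma^2\ge n/2$ for large $n$. For the $\ell_\infty$ bound, write $C=zz^*+\sigma W$ so that the fixed-point relation reads entrywise $\hat x_k=\phase\bigl(\lambda z_k+\sigma(W\hat x)_k\bigr)$; since $\lambda\ge n/2$ while $|\sigma(W\hat x)_k|\le\sigma\|W\hat x\|_\infty\le c_0\kappa_2 n\le\lambda/2$ (for $c_0$ small), the argument of $\phase$ does not vanish and the elementary inequality $|\phase(a+b)-\phase(a)|\le 2|b|/|a|$ (valid for $0<|b|\le|a|/2$) applies with $a=\lambda z_k$, $b=\sigma(W\hat x)_k$; as $z_k$ is unit-modulus, $\phase(\lambda z_k)=z_k$, whence
\[
  |\hat x_k-z_k|\le\frac{2\sigma|(W\hat x)_k|}{\lambda}\le\frac{4\sigma\|W\hat x\|_\infty}{n}\le\frac{4\kappa_2\sigma\sqrt{n\log n}}{n}=\mathcal{O}\!\left(\sigma\sqrt{\log n/n}\right).
\]
Maximizing over $k$ gives $d_\infty(\hat x,z)\le\|\hat x-z\|_\infty=\mathcal{O}(\sigma\sqrt{\log n/n})$, and all three assertions follow with a single absolute constant $C_0$.

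I expect essentially no new obstacle in this theorem: the genuinely hard step --- controlling $\|W\hat x\|_\infty$ in the face of the dependence between $W$ and $\hat x$, i.e.\ the claim $\hat x\in\calN_1$ --- has already been carried out inside the proof of Theorem~\ref{thm:mainGPM2} through the auxiliary sequences $x^{t,m}$. The only points that still need (routine) care are that the $d_2$-limit $\hat x$ inherits membership in the closed, phase-invariant set $\calN_1$, and that the certificate $S$ has kernel of dimension exactly one (rather than merely $S\succeq0$); the latter is precisely why \eqref{eq:originalSDPanalysis} must be a strict inequality, and both are standard.
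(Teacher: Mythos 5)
Your overall route coincides with the paper's: take $\hat x=x^\infty$ from Theorem~\ref{thm::finCnvg}, use the fixed-point relation to get $S\hat x=0$, certify optimality and uniqueness via Lemma~\ref{lem:certificate}, then obtain the $\ell_2$ bound from the deterministic MLE estimate and the $\ell_\infty$ bound from the first-order relation $\hat x=\mathcal{P}(C\hat x)$ combined with the control of $\|W\hat x\|_\infty$ (your elementary phase-perturbation inequality is an adequate substitute for the computation the paper quotes from the proof of Lemma~4.2 of \cite{bandeira2014tightness}). Two bookkeeping points, however, need repair. First, you cannot invoke \eqref{eq:originalSDPanalysis} verbatim for the GPM limit: that condition is stated for an \emph{optimum} $\hat x$ of \eqref{eq:P}, and its $216\sigma^2$ term is obtained by substituting the bound $\|\hat x - z\|_2\le 12\sigma$, whose derivation uses optimality ($\hat x^*C\hat x\ge z^*Cz$) --- precisely what you are in the middle of proving --- so your argument is circular as written; moreover the constants $216$ and $3$, as well as your claim $\|W\|_2\le 3\sqrt n$, are specific to the Gaussian model, whereas Theorem~\ref{thm:mainbound2} is stated in the broadened sub-gaussian model where only $\|W\|_2\le C_2'\sqrt n$ (Lemma~\ref{lem::conctr2}) is available. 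The fix is exactly what the paper does in ``Verifying optimality'': redo the short certificate computation $u^*Su\ge n-\tfrac32 d_2^2(z,x^\infty)-\sigma\|Wx^\infty\|_\infty-\sigma\|W\|_2$ using the limit's own bounds from Theorem~\ref{thm::finCnvg}, namely $d_2(x^\infty,z)\le\tfrac32\kappa_3\sqrt n$ and $\|Wx^\infty\|_\infty\le(\kappa_2+C_2'\kappa_3)\sqrt{n\log n}$; you already have these in hand, so the repair is mechanical.

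Second, your claim that ``every iterate lies in $\calN$'' (and hence the limit, by closedness) is slightly too strong: Theorem~\ref{thm::indct} places the iterates in $\calN$ with constants $\kappa_2,\kappa_3$ only for $t\le T$, and for $t>T$ (hence for $x^\infty$) one only gets the enlarged constants $\tfrac32\kappa_3$ and $\kappa_2+C_2'\kappa_3$ of \eqref{ineqn::Wyinf}. Your closedness/phase-invariance argument is fine, but it should be run with the enlarged region; this only changes absolute constants and affects neither the certificate positivity (for $c_0$ small) nor the final $\mathcal{O}(\sigma)$ and $\mathcal{O}(\sigma\sqrt{\log n/n})$ bounds.
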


\paragraph*{Eigenvector estimator}

We denote, henceforth, the leading eigenvector of $C$ by $\tilde{x}$, and similarly the leading eigenvector of $C^{(m)}$ by $\tilde{x}^{(m)}$. Note that $\tilde{x}$ and $x^0$ (similarly $\tilde{x}^{(m)}$ and $x^{0,m}$) are identical.\footnote{We use notation $\tilde{x}$ (similarly $\tilde{x}^{(m)}$) to emphasize the leading eigenvector as an estimator, as opposed to merely an initializer.}  
We highlight the significance of the eigenvector estimator in the following theorem, which is a precise version of Theorem \ref{thm::mainEig}.
\begin{theorem}\label{thm::mainEig2} %\TODO{duplicate theorem label}
Let $\tilde x$ be a leading eigenvector of $C$ with
%$\theta \in \mathbb{R}$ such that $\langle z, e^{i \theta} \tilde{x} \rangle = | \langle z, \tilde{x} \rangle |$,
scale and global phase chosen such that $\|\tilde x\|_2 = \sqrt{n}$ and $\tilde x^* z = |\tilde x^* z|$.
There exists an absolute constant $c_0'>0$ such that, if $\sigma < c_0' \sqrt{n/\log n}$, then, with probability $1 - \mathcal{O}(n^{-2})$,
%and the leading eigenvector of $C$, namely, $\tilde x$ with $\|\tilde x\|_2 = \sqrt{n}$, satisfies
\begin{align*}%\label{ineqn::mainEig2}
	d_2( \tilde{x},  z ) & = \|  \tilde{x} - z \|_2 \le C_0'\sigma, \textrm{ and }&
	d_\infty( \tilde{x} , z ) & \le \|  \tilde{x} - z \|_\infty \le C_0' \sigma \sqrt{\log n /n},
\end{align*}
where $C_0' > 0$ is some absolute constant. Moreover, the projected leading eigenvector, namely, $\mathcal{P} \tilde{x} $, satisfies the same bounds with $C_0'$ replaced by $2C_0'$.
\end{theorem}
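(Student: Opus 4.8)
The argument mirrors the heuristic of Section~\ref{sec:intro}, built around the auxiliary matrices $C^{(m)} = zz^* + \sigma W^{(m)}$ of~\eqref{eq:Cm} and their leading eigenvectors $\tilde x^{(m)}$, scaled so that $\|\tilde x^{(m)}\|_2 = \sqrt n$. First I would fix a ``good event'' of probability $1 - \mathcal{O}(n^{-2})$ on which, using the random-matrix estimates developed in Section~\ref{sec:eigen}: (i) $\|W\|_2 = \mathcal{O}(\sqrt n)$, hence $\|W^{(m)}\|_2 \le \|W\|_2 = \mathcal{O}(\sqrt n)$ for every $m$ (recall $W^{(m)} = (I - e_me_m^*)W(I - e_me_m^*)$); (ii) $\|w_m\|_2 = \mathcal{O}(\sqrt n)$ for every $m$; (iii) $|w_m^*\tilde x^{(m)}| = \mathcal{O}(\sqrt{n\log n})$ for every $m$. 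Item (iii) is where the auxiliary construction pays off: $W^{(m)}$, hence $\tilde x^{(m)}$, depends only on $\{W_{k\ell}\}_{k,\ell\neq m}$, so $w_m$ and $\tilde x^{(m)}$ are independent, and conditionally on $\tilde x^{(m)}$ the scalar $w_m^*\tilde x^{(m)}$ is a sum of independent sub-gaussians with variance proxy $\mathcal{O}(\|\tilde x^{(m)}\|_2^2) = \mathcal{O}(n)$; a sub-gaussian tail bound plus a union bound over $m$ yields (iii) \emph{without} any a priori $\ell_\infty$ control of $\tilde x^{(m)}$. On this event, since $\sigma \le c_0'\sqrt{n/\log n} = o(\sqrt n)$, Weyl's inequality applied to $zz^* + \sigma W$ and to each $zz^* + \sigma W^{(m)}$ shows their leading eigenvalue lies in $[n - \mathcal{O}(\sigma\sqrt n),\, n + \mathcal{O}(\sigma\sqrt n)] = \Theta(n)$ and their second eigenvalue is $\mathcal{O}(\sigma\sqrt n)$, so every eigengap in sight is $\Theta(n)$.

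Next I would run three perturbation estimates. For the $\ell_2$ bound, viewing $\tilde x$ as the leading eigenvector of the perturbation $zz^* + \sigma W$ of $zz^*$ (leading eigenvector $z$, eigengap $n$), the Davis--Kahan bound of Lemma~\ref{lem::ptb} gives $d_2(\tilde x, z) = \mathcal{O}(\sigma\|Wz\|_2/n) = \mathcal{O}(\sigma)$, since $\|Wz\|_2 \le \|W\|_2\|z\|_2 = \mathcal{O}(n)$; the phase normalization $\tilde x^* z = |\tilde x^* z|$ turns $d_2$ into $\|\cdot\|_2$, giving the stated $\ell_2$ bound. For the $\ell_\infty$ bound the target is $\|W\tilde x\|_\infty = \mathcal{O}(\sqrt{n\log n})$, and the engine is a bound on $d_2(\tilde x, \tilde x^{(m)})$: viewing $\tilde x$ as the leading eigenvector of $C = C^{(m)} + \sigma(W - W^{(m)})$, Lemma~\ref{lem::ptb} gives $d_2(\tilde x, \tilde x^{(m)}) = \mathcal{O}(\sigma\|(W - W^{(m)})\tilde x^{(m)}\|_2/n)$. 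Since $W - W^{(m)} = e_m w_m^* + w_m e_m^*$, we have $(W - W^{(m)})\tilde x^{(m)} = (w_m^*\tilde x^{(m)})e_m + \tilde x^{(m)}_m w_m$, and because $(w_m)_m = W_{mm} = 0$ the vectors $e_m$ and $w_m$ are orthogonal, so $\|(W - W^{(m)})\tilde x^{(m)}\|_2^2 = |w_m^*\tilde x^{(m)}|^2 + |\tilde x^{(m)}_m|^2\|w_m\|_2^2$. The first term is $\mathcal{O}(n\log n)$ by (iii). For the second, I would use the decisive identity obtained by reading the eigenrelation $C^{(m)}\tilde x^{(m)} = \lambda^{(m)}\tilde x^{(m)}$ (with $\lambda^{(m)}$ the leading eigenvalue of $C^{(m)}$) in coordinate $m$: since the $m$-th row of $W^{(m)}$ vanishes, $\lambda^{(m)}\tilde x^{(m)}_m = z_m(z^*\tilde x^{(m)})$, whence $|\tilde x^{(m)}_m| = |z^*\tilde x^{(m)}|/\lambda^{(m)} \le \|z\|_2\|\tilde x^{(m)}\|_2/\lambda^{(m)} = n/\lambda^{(m)} = \mathcal{O}(1)$; with (ii) the second term is $\mathcal{O}(n)$. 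Hence $\|(W - W^{(m)})\tilde x^{(m)}\|_2 = \mathcal{O}(\sqrt{n\log n})$ and $d_2(\tilde x, \tilde x^{(m)}) = \mathcal{O}(\sigma\sqrt{\log n/n})$ for all $m$.

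To conclude, choose the (free) global phase of $\tilde x^{(m)}$ so that $\|\tilde x - \tilde x^{(m)}\|_2 = d_2(\tilde x, \tilde x^{(m)})$ --- legitimate since $|w_m^*\tilde x^{(m)}|$ is phase-invariant. Then $|(W\tilde x)_m| = |w_m^*\tilde x| \le |w_m^*\tilde x^{(m)}| + \|w_m\|_2\|\tilde x - \tilde x^{(m)}\|_2 = \mathcal{O}(\sqrt{n\log n}) + \mathcal{O}(\sqrt n)\cdot\mathcal{O}(\sigma\sqrt{\log n/n}) = \mathcal{O}(\sqrt{n\log n})$, using $\sigma\log n = \mathcal{O}(\sqrt{n\log n})$; hence $\|W\tilde x\|_\infty = \mathcal{O}(\sqrt{n\log n})$. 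Now rewrite the eigenrelation $C\tilde x = \lambda\tilde x$ (with $\lambda$ the leading eigenvalue of $C$) as $\tilde x = \frac{z^*\tilde x}{\lambda}z + \frac{\sigma}{\lambda}W\tilde x$, so that for each $m$, using $|z_m| = 1$, $|\tilde x_m - z_m| \le \frac{|z^*\tilde x - \lambda|}{\lambda} + \frac{\sigma}{\lambda}\|W\tilde x\|_\infty$. By the $\ell_2$ bound, $z^*\tilde x = n - \frac{1}{2} d_2(\tilde x, z)^2 = n - \mathcal{O}(\sigma^2)$, while $\lambda = n + \mathcal{O}(\sigma\sqrt n) = \Theta(n)$; since $\sigma \le \sqrt n$ this makes the first summand $\mathcal{O}(\sigma/\sqrt n) \le \mathcal{O}(\sigma\sqrt{\log n/n})$, and the second is $\mathcal{O}(\sigma\sqrt{n\log n}/n) = \mathcal{O}(\sigma\sqrt{\log n/n})$. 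This proves $\|\tilde x - z\|_\infty = \mathcal{O}(\sigma\sqrt{\log n/n})$, and $d_2, d_\infty$ are dominated by $\|\cdot\|_2, \|\cdot\|_\infty$. For $c_0'$ small this bound is $< 1$, so each $\tilde x_k \neq 0$ and $|(\mathcal{P}\tilde x)_k - \tilde x_k| = \big||\tilde x_k| - 1\big| \le |\tilde x_k - z_k|$; summing, $\|\mathcal{P}\tilde x - \tilde x\|_2 \le \|\tilde x - z\|_2$ and $\|\mathcal{P}\tilde x - \tilde x\|_\infty \le \|\tilde x - z\|_\infty$, and the triangle inequality gives the projected-eigenvector claim with constant $2C_0'$.

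\textbf{Main obstacle.} The crux --- and the only genuinely new step --- is the pair of observations in the third paragraph: passing from $C$ to $C^{(m)}$ simultaneously decouples $w_m$ from $\tilde x^{(m)}$ (yielding a clean sub-gaussian bound on $|w_m^*\tilde x^{(m)}|$ with no circular $\ell_\infty$ input) and, because it zeroes the $m$-th row of $W^{(m)}$, forces the exact low-rank identity $\lambda^{(m)}\tilde x^{(m)}_m = z_m z^*\tilde x^{(m)}$, which alone pins $|\tilde x^{(m)}_m|$ at $\mathcal{O}(1)$ --- precisely the quantity one would otherwise be tempted to bound circularly. Everything else is a direct application of Lemma~\ref{lem::ptb} or a routine Weyl/concentration estimate; the only housekeeping is that all high-probability statements are taken over a union of $\mathcal{O}(n)$ events (one per $m$), which is harmless here since there is no iteration to union-bound over, and is why the failure probability is $\mathcal{O}(n^{-2})$ and the constants $c_0', C_0'$ are absolute.
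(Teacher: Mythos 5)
Your proof is correct and follows essentially the same route as the paper's: Davis--Kahan (Lemma~\ref{lem::ptb}) applied once with $A=zz^*$ for the $\ell_2$ bound and once with $A=C^{(m)}$, $E=\sigma\Delta W^{(m)}$ for $d_2(\tilde x,\tilde x^{(m)})$, the split~(\ref{ineqn::eigKey}) exploiting independence of $w_m$ and $\tilde x^{(m)}$, the eigen-relation identity $|\tilde x^{(m)}_m|=|z^*\tilde x^{(m)}|/\lambda_1(C^{(m)})=\mathcal{O}(1)$, and the coordinate-wise eigen-relation for the final $\ell_\infty$ estimate. The only cosmetic deviations are that you bound $\|\Delta W^{(m)}\tilde x^{(m)}\|_2$ directly from its rank-two structure plus scalar Hoeffding rather than invoking Lemma~\ref{lem::conctr1}, use $\|W^{(m)}\|_2\le\|W\|_2$ via projection instead of a union-bounded concentration estimate, and prove the $\mathcal{P}\tilde x$ claim by an elementary entrywise argument rather than citing \cite[Prop.~1]{liu2016statistical}.
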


The eigenvector estimator has been studied extensively in recent years, prominently in the statistics literature~\citep{Pau07,JohLu12}, under the \emph{spiked covariance model}. While the perturbation $\tilde{x} - z$ is usually studied under $\ell_2$ norms, the $\ell_{\infty}$ norm received much less attention. A recent $\ell_{\infty}$ perturbation result appeared in \cite{FanWanZho16}, but it is a deterministic bound and would produce a suboptimal result here.

\section{Proof organization for eigenvector perturbations}\label{sec:eigen}

We begin with some concentration lemmas, which will also be useful in Section \ref{sec:phaseSyn}. Recall the definition of $W^{(m)}$~\eqref{eq:Cm}. We also define $\Delta W^{(m)} := W - W^{(m)}$, which has nonzero entries only in the $m$th row and $m$th column, given by $w_m$.

\subsection*{Concentration lemmas}
The first concentration result is standard and is a direct consequence of, for example, Proposition 2.4 in~\cite{RudVer10}.
\begin{lemma}\label{lem::conctr2}
With probability $1 - \mathcal{O}(n^{-2})$, the following holds for any $m\in [n]$:
\begin{equation}\label{ineqn::W}
\| W \|_2 \le C_2'\sqrt{n}, \quad \| W^{(m)} \|_2 \le C_2' \sqrt{n}, \quad \| \Delta W^{(m)} \|_2 \le C_2' \sqrt{n}, \quad \| w_m \|_2 \le C_2' \sqrt{n},
\end{equation}
where $C_2' > 0$ is an absolute constant.
\end{lemma}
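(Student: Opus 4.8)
The plan is to reduce all four estimates to a single one—the operator-norm bound $\|W\|_2 \le C\sqrt{n}$—and then invoke a standard non-asymptotic bound for sub-gaussian Wigner matrices.

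First I would observe that the last three bounds follow from the first by elementary linear algebra, deterministically and with no union bound over $m$ needed. Writing $e_m$ for the $m$-th standard basis vector, a direct computation gives $W^{(m)} = (\Id - e_m e_m^*) W (\Id - e_m e_m^*)$; since $\Id - e_m e_m^*$ is an orthogonal projection, hence of operator norm at most $1$, this yields $\|W^{(m)}\|_2 \le \|W\|_2$. Next, $\Delta W^{(m)} = W - W^{(m)}$, so the triangle inequality gives $\|\Delta W^{(m)}\|_2 \le \|W\|_2 + \|W^{(m)}\|_2 \le 2\|W\|_2$. Finally $w_m = W e_m$, so $\|w_m\|_2 \le \|W\|_2\|e_m\|_2 = \|W\|_2$. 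Thus it suffices to prove $\|W\|_2 \le C\sqrt{n}$ with probability $1 - \mathcal{O}(n^{-2})$ and then set $C_2' = 2C$.

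For that remaining step I would appeal to the standard operator-norm concentration for Wigner-type matrices with independent sub-gaussian entries. Under the model assumed in Section~\ref{sec:mainres}—$W$ Hermitian with zero diagonal, $\{\Re W_{k\ell}, \Im W_{k\ell}\}_{k<\ell}$ jointly independent, mean zero, and satisfying the tail bound (\ref{eq:subdef})—a result such as Proposition~2.4 in~\cite{RudVer10} (or an $\varepsilon$-net argument combined with Bernstein's inequality) supplies constants $c, C > 0$ depending only on $K$ with $\mathbb{P}(\|W\|_2 > C\sqrt{n}) \le 2\exp(-cn)$. Since $2\exp(-cn) = \mathcal{O}(n^{-2})$, the claim follows with room to spare.

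I do not expect a genuine obstacle here; the only point requiring mild care is confirming that the cited bound applies to a complex Hermitian matrix whose real and imaginary parts satisfy the componentwise condition (\ref{eq:subdef}), which one handles either by a complex net argument or by bounding $\Re W$ (real symmetric) and $i\,\Im W$ (Hermitian) separately and adding. As a robust alternative for the fourth estimate—should one prefer not to route $\|w_m\|_2$ through $\|W\|_2$—one can bound $\|w_m\|_2^2 = \sum_{k \neq m} |W_{km}|^2$ directly: each $|W_{km}|^2$ is sub-exponential, so Bernstein's inequality gives $\|w_m\|_2 \le C\sqrt{n}$ with probability $1 - \mathcal{O}(n^{-3})$, and a union bound over $m \in [n]$ costs only a factor $n$.
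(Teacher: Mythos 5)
Your proposal is correct and follows essentially the same route as the paper: reduce all four bounds to the single estimate $\|W\|_2 \le C\sqrt{n}$ and invoke Proposition 2.4 of \cite{RudVer10} (the paper handles $\|W^{(m)}\|_2$ with a union bound over $m$ and gets $\|w_m\|_2 \le \|W\|_2$ just as you do, whereas your identity $W^{(m)} = (\Id - e_m^{} e_m^*)W(\Id - e_m^{} e_m^*)$ makes the reduction deterministic, which is slightly cleaner). The one point to tighten is the Hermitian dependence: that proposition requires all entries of the matrix to be independent, so splitting into $\Re W$ and $i\,\Im W$ alone does not suffice (each is still symmetric, with lower triangle determined by the upper); the paper's fix is to further split $W$ into its upper- and lower-triangular parts $M_u, M_l$ and bound $\Re(M_u), \Im(M_u), \Re(M_l), \Im(M_l)$ separately, which is the one-line addition your sketch needs (your $\varepsilon$-net alternative would also work).
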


Let $\mathbb{S}^{n-1}$ be the set of unit vectors in $\mathbb{C}^n$. Suppose for each $m \in [n]$ we have a finite (random) set $\mathcal{U}^{(m)} \subset \sqrt{n}\,\mathbb{S}^{n-1}$ whose elements are independent of $\Delta W^{(m)}$, and the cardinality of $\mathcal{U}^{(m)}$ is not random. Concentration inequalities enable us to bound $\| \Delta W^{(m)} u \|_2$ uniformly over all $u \in \mathcal{U}^{(m)}$ with high probability. We state this formally in the next lemma. %We will use  $C_1'$ to denote a positive absolute constant.

\begin{lemma}\label{lem::conctr1}
Suppose $|\mathcal{U}^{(m)}| \le 3n^2$ for all $m \in [n]$. Let $u_m$ be the $m$th entry of a vector $u \in \mathcal{U}^{(m)}$ and denote $M_m = \max_{u \in\mathcal{U}^{(m)}} | u_m |$. Then, with probability $1-\mathcal{O}(n^{-2})$, 
\begin{equation}\label{ineqn::conctr1}
\max_{u \in \mathcal{U}^{(m)} } \| \Delta W^{(m)} u \|_2 \le C_1' \sqrt{ n\log n } + C_1' \sqrt{n}\, M_m, \quad \forall \, m \in [n],
\end{equation}
where $C_1'>0$ is an absolute constant. In particular, we can choose $C_1'>0$ such that, if $\mathcal{U}^{(m)} \subset \Cn_1$ (defined in~\eqref{def:Cn1}), %that is, each coordinate of vectors in $\mathcal{U}^{(m)}$ has unit modulus, 
then, with probability $1-\mathcal{O}(n^{-2})$, 
\begin{equation}\label{ineqn::conctr1new}
\max_{m \in [n]} \max_{u \in \mathcal{U}^{(m)} } \| \Delta W^{(m)} u \|_2 \le C_1'\sqrt{ n\log n }.
\end{equation}
\end{lemma}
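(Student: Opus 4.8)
The plan is to fix $m$, condition on $\mathcal{U}^{(m)}$ (which is independent of $\Delta W^{(m)}$), and bound $\|\Delta W^{(m)} u\|_2$ for a single $u$ by exploiting the sparsity structure of $\Delta W^{(m)}$, then take a union bound over the $\le 3n^2$ elements of $\mathcal{U}^{(m)}$ and over the $n$ values of $m$. The key structural observation is that $\Delta W^{(m)}$ has nonzero entries only in row $m$ and column $m$, all given by $w_m$ (with a zero on the diagonal). Writing $u \in \sqrt{n}\,\mathbb{S}^{n-1}$ and $u_m$ for its $m$th entry, one checks directly that
\begin{align*}
	\Delta W^{(m)} u & = w_m^* u \cdot e_m + u_m \cdot w_m - (\text{diagonal correction, which vanishes since } (w_m)_m = 0),
\end{align*}
so that $\|\Delta W^{(m)} u\|_2 \le |w_m^* u| + |u_m|\,\|w_m\|_2$ (being slightly careful about the overlap of the $m$th coordinate, which only improves the bound). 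Thus it suffices to control $|w_m^* u|$ uniformly over $u \in \mathcal{U}^{(m)}$ and to invoke Lemma~\ref{lem::conctr2} for $\|w_m\|_2 \le C_2'\sqrt{n}$.

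For the first term, since $u$ is independent of $w_m$ and the entries of $w_m$ are independent, centered, sub-gaussian with parameter $\mathcal{O}(K)$, the linear combination $w_m^* u$ is a sub-gaussian complex random variable with variance proxy $\mathcal{O}(K^2 \|u\|_2^2) = \mathcal{O}(K^2 n)$ (here one conditions on $u$ and uses that sums of independent sub-gaussians are sub-gaussian with parameter the $\ell_2$-norm of the coefficients). A standard sub-gaussian tail bound then gives $\mathbb{P}(|w_m^* u| > t\sqrt{n}) \le 2\exp(-c t^2/K^2)$ for an absolute constant $c$. Choosing $t = C\sqrt{\log n}$ with $C$ large enough, this probability is $\le n^{-5}$, say; union-bounding over $\le 3n^2$ vectors $u$ and over $n$ choices of $m$ costs a factor $\le 3n^3$, leaving total failure probability $\mathcal{O}(n^{-2})$. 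On the complement, $\max_{u \in \mathcal{U}^{(m)}} |w_m^* u| \le C_1' \sqrt{n\log n}$ for all $m$, which combined with the structural bound and Lemma~\ref{lem::conctr2} yields~\eqref{ineqn::conctr1} after adjusting the absolute constant $C_1'$ to absorb $C_2'$ and $c$.

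The ``in particular'' clause is immediate: if $\mathcal{U}^{(m)} \subset \Cn_1$, then every entry has modulus $1$, so $M_m = 1$ and the second term $C_1'\sqrt{n}\,M_m = C_1'\sqrt{n}$ is dominated by the first term $C_1'\sqrt{n\log n}$ for $n \ge 3$; enlarging $C_1'$ by a factor of $2$ gives~\eqref{ineqn::conctr1new}. The main thing to be careful about—rather than a genuine obstacle—is the conditioning argument: one must make sure that the union bound is taken over a set $\mathcal{U}^{(m)}$ whose cardinality is \emph{deterministic} (as assumed) and whose elements are independent of $\Delta W^{(m)}$, so that conditioning on $\mathcal{U}^{(m)}$ reduces matters to a fixed-vector sub-gaussian tail bound; the randomness of \emph{which} vectors lie in $\mathcal{U}^{(m)}$ is harmless precisely because the tail bound holds uniformly over the sphere with the stated cardinality budget. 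No other step is delicate: the sparsity of $\Delta W^{(m)}$ makes the matrix-vector product essentially one-dimensional, and everything else is a routine sub-gaussian concentration plus union bound.
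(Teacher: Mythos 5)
Your proof is correct and takes essentially the same route as the paper: both exploit the rank-two sparsity of $\Delta W^{(m)}$ to decompose $\Delta W^{(m)} u$ into the $m$th-coordinate contribution $w_m^* u$ plus the scalar multiple $u_m w_m$, bound $|w_m^* u|$ by a sub-gaussian tail/Hoeffding inequality, and union-bound over $u\in\mathcal{U}^{(m)}$ and $m\in[n]$ after conditioning on $\mathcal{U}^{(m)}$. The one small difference is that you invoke Lemma~\ref{lem::conctr2} directly for $\|w_m\|_2 \le C_2'\sqrt{n}$, whereas the paper re-derives this via a Bernstein-type bound on $\sum_{k\neq m}|W_{km}|^2$ inside the proof; your shortcut is valid and slightly cleaner.
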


A straightforward application of Hoeffding's inequality for sub-gaussian variables~\citep{Ver10} shows $\max_{u \in \mathcal{U}^{(m)} } | w_m ^* u | = \mathcal{O}(\sqrt{n \log n})$  with probability $1-\mathcal{O}(n^{-2})$. Lemma~\ref{lem::conctr1} is more general, because $| w_m ^* u | \le \| \Delta W^{(m)} u \|_2$, and it will be useful in later proofs.

For the eigenvector problem, we will choose $\mathcal{U}^{(m)} = \{ \tilde{u}^{(m)} \}$ (a singleton) where $\tilde{u}^{(m)}$ is a leading eigenvector of $C^{(m)} = zz^* + \sigma W^{(m)}$ scaled to have norm $\sqrt{n}$. For problem~(\ref{eq:P}), for each $m \in [n]$, the set $\mathcal{U}^{(m)}$ will be $\{x^{t,m}\}_{t=0}^T$, namely,  
%the sequence initialized with $\tilde{x}^{(m)}$ (equivalently $x^{0,m}$) and iterated according to $x^{t,m} \leftarrow \phase(C^{(m)}x^{t-1,m})$ up to the $T$'th iterate, 
the first $T+1$ iterates of Algorithm~\ref{algo:GPM} with input $C^{(m)}$, where $T := 3n^2-1$. By construction, elements of the set $\mathcal{U}^{(m)}$ are independent of $\Delta W^{(m)}$.

\subsection*{Introducing auxiliary eigenvector problems}
%We will see how the idea of introducing auxiliary problems proves to be effective in the analysis of eigenvectors. It also serves as a warm-up analysis and ultimately leads to the proof of Theorem \ref{thm:mainSDP2}--\ref{thm:mainbound2}. 

As is well known, the leading eigenvectors of $C$ are the solutions to the following optimization problem (note that this problem is a relaxation of~\eqref{eq:P}):
\begin{equation}\label{prob:eig}
\begin{array}{rlc}\tag{\text{$\tilde{\text{P}}$}}
\underset{x \in \mathbb{C}^n}{\max} & x^* C x  &\\
\mathrm{s.t.} & \| x \|_2 = \sqrt{n}. &
\end{array}
\end{equation}
We aim to show that a solution $\tilde x$ of~\eqref{prob:eig} is close to $z$ in the sense of $d_\infty$. As before, the major difficulty of the analysis is obtaining a sharp bound on $\| W \tilde{x} \|_{\infty}$. This is apparent when we write $\lambda_1(C)$ for the leading eigenvalue of $C$ and use $C\tilde{x} = \lambda_1(C) \tilde{x}$ to obtain (choosing the global phase of $\tilde{x}$ such that $z ^* \tilde{x} = |z ^* \tilde{x}|$):
\begin{equation*}
|\tilde{x}_m - z_m| = \left|\frac{(C \tilde{x})_m}{\lambda_1(C)}  - z_m\right| \le \left|\frac{ |z ^* \tilde{x}|}{\lambda_1(C)} - 1 \right| + \frac{\sigma|(W\tilde{x})_m|}{\lambda_1(C)}, \quad \forall \, m \in[n].
\end{equation*}
While it is easy to analyze $\lambda_1(C)$ and $|z ^* \tilde{x}|$, bounding $\| W \tilde{x} \|_{\infty}$ requires more work.

For $m \in [n]$, let $\tilde{x}^{(m)}$ be the solution to an auxiliary problem~\eqref{prob:eig} in which $C$ is replaced by $C^{(m)} = zz^* + \sigma W^{(m)}$---thus, $\tilde{x}^{(m)}$ is equivalent to $x^{0,m}$. 
Following the same strategy as in~\eqref{eq:inftyboundsplit}, we can now split $W\tilde{x}$ into two terms and try to bound separately:
\begin{align}\label{ineqn::eigKey}
|(W\tilde{x})_m| = |w_m^* \tilde x| \le | w_m ^* \tilde{x}^{(m)} | + \| w_m \|_2 \cdot d_2( \tilde{x} , \tilde{x}^{(m)})
\end{align}
where $| w_m ^* \tilde{x}^{(m)} |$ is the dominant term, and can be easily bounded---see the paragraph below Lemma \ref{lem::conctr1}; and $\| w_m \|_2 \cdot d_2( \tilde{x} , \tilde{x}^{(m)}) = \mathcal{O}(\sqrt{n}\, d_2( \tilde{x} , \tilde{x}^{(m)}) )$ is the higher-order discrepancy error, which is the price we pay for replacing $\tilde{x}$ with $\tilde{x}^{(m)}$. 

The crucial point is that $d_2( \tilde{x} , \tilde{x}^{(m)}) = \mathcal{O}(\sigma \sqrt{\log n / n})$, which is much smaller than $d_2(\tilde{x}, z) = \mathcal{O}(\sigma)$. This is because the difference between $ \tilde{x}$ and $\tilde{x}^{(m)}$ results from a sparse perturbation $\Delta W^{(m)}$, whose effect on the leading eigenvector is small. This point is formalized in the next lemma, which follows from \citep{DavKah70}.\footnote{By Davis-Kahan Theorem and Weyl's inequality, $\sin \theta( \tilde{u} , u) \le  \| E u \|_2 / \sqrt{n}\,(\delta - \|E \|_2)$. Thus, the lemma follows from $d_2(  \tilde{u} , u )^2/n = 2 - 2| \tilde{u}^*  u | / n = 2 - 2\cos \theta( \tilde{u} , u) \le 2\sin^2 \theta( \tilde{u} ,u)$.}

\begin{lemma}[Davis--Kahan $\sin \Theta$ Theorem]\label{lem::ptb}
Suppose that $A,E \in \mathbb{C}^{n \times n}$ are Hermitian matrices, and $\tilde{A} = A + E$. Let $\delta := \lambda_1(A) - \lambda_2(A)$ be the gap between the top two eigenvalues of $A$, and $u, \tilde{u}$ be leading eigenvectors of $A$ and $\tilde{A}$ respectively, normalized such that $\|u\|_2 = \|\tilde{u}\|_2 = \sqrt{n}$.
%Suppose $\delta > 2 \|E \|_2$, then for some suitable $\theta \in \mathbb{R}$,
If $\delta >  \|E \|_2$, then
\begin{equation}\label{ineqn::ptb}
d_2(  \tilde{u} , u ) \le \frac{\sqrt{2}\, \| E u \|_2}{\delta - \|E \|_2 }.
\end{equation}
\end{lemma}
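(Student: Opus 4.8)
The plan is to reduce the statement to the classical Davis--Kahan $\sin\Theta$ bound and then translate the angle between the two eigen-lines into the quotient distance $d_2$. Normalize by setting $\hat u = u/\sqrt n$ and $\hat v = \tilde u/\sqrt n$, unit eigenvectors of $A$ and $\tilde A$, and let $\theta \in [0,\pi/2]$ be the principal angle between $\mathbb{C}\hat u$ and $\mathbb{C}\hat v$, so that $\cos\theta = |\hat u^* \hat v| = |u^* \tilde u|/n$. Using the closed form in~\eqref{eq:dtwo}, $d_2(\tilde u, u)^2 = \|\tilde u\|_2^2 + \|u\|_2^2 - 2|\tilde u^* u| = 2n(1-\cos\theta)$, and since $\cos\theta \ge 0$ we have $1 - \cos\theta = \sin^2\theta/(1+\cos\theta) \le \sin^2\theta$; hence it suffices to prove $\sin\theta \le \|E u\|_2 / (\sqrt n\,(\delta - \|E\|_2))$, which yields $d_2(\tilde u, u) \le \sqrt{2n}\,\sin\theta \le \sqrt 2\,\|Eu\|_2/(\delta - \|E\|_2)$.

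For the key estimate I would start from the eigen-equation of $u$ tested against the perturbed matrix: with $\lambda := \lambda_1(A)$ we have $A\hat u = \lambda\hat u$, so $\tilde A\hat u = (A+E)\hat u = \lambda\hat u + E\hat u$. Let $Q = I - \hat v\hat v^*$ be the orthogonal projector onto $\hat v^\perp$. Applying $Q$ and using that $\tilde A$ is Hermitian with leading eigenvector $\hat v$ --- so $\tilde A$ maps $\hat v^\perp$ into itself and $Q\tilde A(Q\hat u) = \tilde A(Q\hat u)$ --- one obtains $(\tilde A - \lambda I)(Q\hat u) = Q E\hat u$. Restricted to $\hat v^\perp$, the operator $\tilde A - \lambda I$ has eigenvalues $\lambda_j(\tilde A) - \lambda$ for $j \ge 2$, and Weyl's inequality gives $\lambda_2(\tilde A) \le \lambda_2(A) + \|E\|_2 = \lambda - \delta + \|E\|_2$; under the hypothesis $\delta > \|E\|_2$ every such eigenvalue is at most $-(\delta - \|E\|_2) < 0$, so $\tilde A - \lambda I$ is invertible on $\hat v^\perp$ with inverse of operator norm at most $1/(\delta - \|E\|_2)$. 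Therefore $\|Q\hat u\|_2 \le \|Q E\hat u\|_2/(\delta - \|E\|_2) \le \|E\hat u\|_2/(\delta - \|E\|_2)$. Since $\|Q\hat u\|_2$ is precisely the sine of the principal angle and $\|E\hat u\|_2 = \|Eu\|_2/\sqrt n$, this is exactly the desired bound on $\sin\theta$, and combining with the first paragraph finishes the proof.

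The argument is short and the only real subtlety is the choice to project onto $\hat v^\perp$ rather than $\hat u^\perp$: projecting onto the complement of the perturbed eigenvector is what puts $\|Eu\|_2$ (the quantity we can actually control in the application, since $w_m$ is independent of $x^{0,m}$) on the right-hand side, rather than $\|E\tilde u\|_2$. The accompanying bookkeeping --- Weyl's inequality to push $\lambda_2(\tilde A)$ below $\lambda - (\delta - \|E\|_2)$, hence to keep $\tilde A - \lambda I$ boundedly invertible on $\hat v^\perp$ --- is where the hypothesis $\delta > \|E\|_2$ enters, and it is the one place to take care. The complex-scalar phase ambiguity is harmless: the principal angle and $\|Q\hat u\|_2$ are phase-invariant, and $d_2$ already minimizes over the global phase. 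Alternatively, one may simply invoke the $\sin\Theta$ theorem of~\citep{DavKah70} directly, as the footnote does, and carry out only the conversion in the first paragraph.
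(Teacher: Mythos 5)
Your proof is correct, and the first paragraph's conversion $d_2(\tilde u, u)^2 = 2n(1-\cos\theta) \le 2n\sin^2\theta$ is exactly the one the paper records in its footnote. Where you go beyond the paper is in the second paragraph: the paper simply cites the Davis--Kahan $\sin\Theta$ theorem of \cite{DavKah70} together with Weyl's inequality as a black box, whereas you reprove the needed bound from scratch via the resolvent-projector argument on $\hat v^\perp$. That argument is sound, including in the degenerate case: Weyl gives $\lambda_j(\tilde A) \le \lambda_2(\tilde A) \le \lambda_2(A) + \|E\|_2 = \lambda - (\delta - \|E\|_2)$ for $j\ge 2$, so $(\tilde A - \lambda I)$ restricted to $\hat v^\perp$ is invertible with inverse of norm at most $1/(\delta - \|E\|_2)$. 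Your remark about the choice of projector is the right thing to emphasize: starting from the eigen-equation of the \emph{unperturbed} $u$ and projecting onto the complement of the \emph{perturbed} $\hat v$ is precisely what produces $\|Eu\|_2$ --- the quantity the paper can control via the independence of $\Delta W^{(m)}$ from $x^{0,m}$ --- rather than $\|E\tilde u\|_2$ or the cruder $\|E\|_2$. So the net difference is only one of self-containedness: the paper delegates to a known reference, and you supply the proof of that reference.
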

The benefit of this perturbation result is pronounced when $E$ is a sparse random matrix: if we set $A = C^{(m)}, E = \sigma \Delta W^{(m)}$, then the numerator in (\ref{ineqn::ptb}) becomes $\sqrt{2}\, \sigma \| \Delta W^{(m)} \tilde x^{(m)} \|_2 = \mathcal{O}(\sigma \sqrt{n\log n })$ with high probability (by Lemma \ref{lem::conctr1} and a bound on $M_m$). If, however, we set $A = zz^*, E = \sigma W$, then the numerator is $\sqrt{2}\, \sigma \|W z \|_2 = \mathcal{O}(\sigma n)$ with high probability. This is why $d_2( \tilde{x} , \tilde{x}^{(m)})$ is so small and (\ref{ineqn::eigKey}) yields a tight bound.

We remark that in many later uses of perturbation results (e.g., \citep{Von07,RohChaYu11}), especially in statistics and theoretical computer science, it is common to invoke a variant of the Davis--Kahan theorem in which $\| E u \|_2$ is replaced by $\| E \|_2$ in (\ref{ineqn::ptb}), which would lead to a suboptimal result here. This is because $\| Eu \|_2 \ll \| E\|_2$ with high probability when $E$ is a random sparse matrix and $\|u\|_\infty$ is not large. Our analysis here is an example that shows the merit of using the more precise version of the Davis--Kahan theorem. 

\section{Proof organization for phase synchronization}\label{sec:phaseSyn}
We begin with some useful lemmas about the local contraction property.
These will prove useful to establish the desired properties of the iterates $x^t$ of Algorithm~\ref{algo:GPM}, by induction. These properties extend to the limit $x^\infty = \lim_{t \to \infty} x^t$ by continuity. 
Finally, we will use
a known optimality certificate for~\eqref{eq:SDP} to validate $x^\infty$.

\subsection*{Local contraction lemmas}
First let us denote the rescaled matrix $C/n$ by $\mathcal{L}$, which can also be viewed as a linear operator in $\mathbb{R}^n$:
\begin{equation*}
\mathcal{L}x := \frac{1}{n}Cx = \frac{z^*x}{n}z + \frac{\sigma}{n}Wx, \qquad \forall \, x \in \mathbb{C}^n.
\end{equation*}
This is a linear combination of $z$ (signal) and $Wx$ (noise).
When $\sigma\|W\|_2$ is small compared to $n$, $\mathcal{L}$ is Lipschitz continuous on the sphere around $z$, with respect to $d_2$.

\begin{lemma}\label{lem::ctr}
Suppose $\varepsilon \in (0,1/2)$, and $x,y \in \sqrt{n}\,\mathbb{S}^{n-1}$, with $ d_2( x , z ) \le \varepsilon \sqrt{n}$, $d_2( y , z ) \le \varepsilon \sqrt{n}$. Then,
\begin{equation}\label{ineqn::ctr}
d_2( \mathcal{L}x , \mathcal{L} y ) \le \left(6\varepsilon + \frac{\sigma \| W \|_2}{n} \right) d_2(x,y). 
\end{equation}
\end{lemma}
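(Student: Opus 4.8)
The plan is to exploit two structural facts about $\mathcal{L}x = \tfrac1n Cx = \tfrac{z^*x}{n}z + \tfrac{\sigma}{n}Wx$: it is $\mathbb{C}$-linear (so $\mathcal{L}(xe^{i\theta}) = e^{i\theta}\mathcal{L}x$), and $d_2$ is invariant under global phase shifts. Combining these, $d_2(\mathcal{L}x,\mathcal{L}y) = \min_\theta \|\mathcal{L}(xe^{i\theta}-y)\|_2$, so it suffices to bound $\|\mathcal{L}h\|_2$ for a single convenient choice $h = \hat x - y$, where $\hat x = xe^{i\theta^\star}$ and $\theta^\star$ is the phase achieving the minimum in $d_2(x,y)$, equivalently the phase making $y^*\hat x = |y^*x| \geq 0$. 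The point of aligning $\hat x$ to $y$ (rather than to $z$, which is the tempting but wrong move) is that this makes $\|h\|_2 = d_2(x,y)$ \emph{exactly}, so the noise contribution will come out with the clean coefficient $\tfrac{\sigma\|W\|_2}{n}$.

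Next I would record the two consequences of this alignment. Since $\|\hat x\|_2 = \sqrt n$ and $y^*\hat x$ is real, expanding $\|y+h\|_2^2 = n$ forces $y^*h = -\tfrac12\|h\|_2^2$, i.e.\ $h$ is orthogonal to $y$ up to second order. Then decompose $z = \tfrac{y^*z}{n}y + w$ with $y^*w = 0$; the hypothesis $d_2(y,z)\le\varepsilon\sqrt n$, together with $d_2(y,z)^2 = 2n - 2|y^*z|$, yields $|y^*z| \le n$ and $\|w\|_2^2 = n - \tfrac{|y^*z|^2}{n} \le \varepsilon^2 n$. Plugging the decomposition into $z^*h$ and using $y^*h = -\tfrac12\|h\|_2^2$ gives $|z^*h| \le \tfrac12\|h\|_2^2 + \|w\|_2\|h\|_2 \le \tfrac12\|h\|_2^2 + \varepsilon\sqrt n\,\|h\|_2$. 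Finally, the triangle inequality for $d_2$ gives $\|h\|_2 = d_2(x,y) \le d_2(x,z)+d_2(z,y) \le 2\varepsilon\sqrt n$, which absorbs the quadratic term and leaves $|z^*h| \le 2\varepsilon\sqrt n\,\|h\|_2$.

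To finish, split $\mathcal{L}h = \tfrac{z^*h}{n}z + \tfrac{\sigma}{n}Wh$ and bound termwise using $\|z\|_2 = \sqrt n$ and $\|Wh\|_2 \le \|W\|_2\|h\|_2$: namely $\big\|\tfrac{z^*h}{n}z\big\|_2 = \tfrac{|z^*h|}{\sqrt n} \le 2\varepsilon\|h\|_2$ and $\big\|\tfrac{\sigma}{n}Wh\big\|_2 \le \tfrac{\sigma\|W\|_2}{n}\|h\|_2$, so that $d_2(\mathcal{L}x,\mathcal{L}y) \le \|\mathcal{L}h\|_2 \le \big(2\varepsilon + \tfrac{\sigma\|W\|_2}{n}\big)\,d_2(x,y)$, which is in fact stronger than the claimed bound (one can of course just weaken $2\varepsilon$ to $6\varepsilon$). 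The only real obstacle is the very first step — recognizing that one should align $\hat x$ with $y$, so that $\|h\|_2$ equals $d_2(x,y)$ and $y^*h$ is automatically second-order; after that everything is bookkeeping. One minor point to note in passing is that $\varepsilon < \tfrac12$ makes $y^*x \neq 0$ (otherwise the hypotheses would force $d_2(x,y) = \sqrt{2n} > 2\varepsilon\sqrt n$), so $\theta^\star$ is unambiguous, though this is not actually used in the estimate above.
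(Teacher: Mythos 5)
Your proof is correct, and it takes a genuinely different route from the paper's. The paper decomposes both $x$ and $y$ along the signal direction, writing $x = az + \sqrt{n}\,\alpha$, $y = bz + \sqrt{n}\,\beta$ with $\alpha,\beta \perp z$ and $a,b \ge 0$, and then must control the residual alignment phase $e^{i\theta}$ in $d_2(x,y)$; the heart of that argument is the chain of estimates showing $|1-e^{i\theta}| \le 4\varepsilon\|e^{i\theta}\alpha-\beta\|_2$, which ultimately yields the key inequality $\tfrac1n|z^*(e^{i\theta}x-y)| \le 6\varepsilon\|e^{i\theta}\alpha-\beta\|_2$ and hence the factor $6\varepsilon$. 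You instead align $x$ to $y$ (so $h = xe^{i\theta^\star}-y$ has $\|h\|_2 = d_2(x,y)$ and $y^*h = -\tfrac12\|h\|_2^2$ exactly, real and second order), and decompose $z$ along $y$, so that the closeness hypotheses enter only through $\|w\|_2 \le \varepsilon\sqrt{n}$ and $\|h\|_2 \le 2\varepsilon\sqrt{n}$; this sidesteps the phase-tracking argument entirely and replaces the paper's "futile" global Cauchy--Schwarz on $z^*h$ with Cauchy--Schwarz applied only to the small orthogonal component $w$, which is exactly why it succeeds. Your bookkeeping is right: $|z^*y|\le n$ gives $|z^*h| \le \tfrac12\|h\|_2^2 + \varepsilon\sqrt{n}\,\|h\|_2 \le 2\varepsilon\sqrt{n}\,\|h\|_2$, and linearity of $\mathcal{L}$ plus phase invariance of $d_2$ justifies $d_2(\mathcal{L}x,\mathcal{L}y) \le \|\mathcal{L}h\|_2$. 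The payoff is a sharper Lipschitz constant $2\varepsilon + \sigma\|W\|_2/n$ in place of $6\varepsilon + \sigma\|W\|_2/n$ (which of course implies the stated bound), with a shorter and arguably more transparent argument; the paper's decomposition along $z$, by contrast, is the one that generalizes naturally to its discussion of how $e^{i\theta}x-y$ cannot align with the signal, but buys nothing extra here. Your closing remark that $\varepsilon<1/2$ forces $y^*x \neq 0$ is accurate and, as you note, not needed for the estimate.
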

This lemma is instrumental in establishing the key local contraction property (Lemma~\ref{lem::ctrT}). It is related to the contraction mapping theorem, in which an iteratively defined sequence converges to a fixed point. We could use this lemma to easily show (using \citep[Proof of Thm.~2]{liu2016statistical} for example) that the normalized version $\bar{\mathcal{L}} x := \mathcal{L} x / \| \mathcal{L} x \|_2$ is a contraction mapping in a neighborhood of $z$ on $\sqrt{n}\,\mathbb{S}^{n-1}$---$\bar{\mathcal{L}}$ is the power method operator.

However, our problem is more complicated due to the unit-modulus constraints in~\eqref{eq:P} which call for the entry-wise operator $\mathcal{P}(\cdot)$ in Algorithm~\ref{algo:GPM}. Consequently, an analysis of Algorithm~\ref{algo:GPM} requires entry-wise bounds on key quantities in each iteration, which are more involved than $\ell_2$ bounds. In the next two lemmas, we will see which entry-wise bounds we need in order to establish the local contraction property.

Recall that $\mathcal{P}:\mathbb{C}^n \to \mathbb{C}_1^n$ maps each entry of a vector to the unit circle in the complex plane: 
\begin{equation*}
\forall\, k \in [n], \qquad ( \mathcal{P}(x) )_k = 
\begin{cases}
x_k / |x_k| & \textrm{ if } x_k \neq 0, \\ 1&  \textrm{ if } x_k = 0.
\end{cases}
\end{equation*}
The case $x_k = 0$ will not appear in the proofs because it can be excluded with high probability. Henceforth, we also drop parentheses in $\mathcal{P}(x)$ for simplicity.

In the next two lemmas, we establish the local contraction property of $\mathcal{P} \mathcal{L}$ (the GPM operator) under the distance $d_2$. Under certain conditions on the input points, $\mathcal{P} \mathcal{L}$ shrinks the $d_2$ distance between points by a ratio in $(0,1)$. In a rigorous sense, this does not imply that $\mathcal{P}\mathcal{L}$ is a contraction mapping, because the output points do not necessarily satisfy the conditions themselves. However, for the sequences of interest, the conditions are satisfied with high probability and this is all we need to ascertain convergence---see Theorems~\ref{thm::indct} and~\ref{thm::finCnvg}.

\begin{lemma}\label{lem::P0}
Suppose $\varepsilon \in [0,1)$. For any $x,y \in \mathbb{C}$, if $|x| \ge 1 - \varepsilon, |y| \ge 1 - \varepsilon$, then
\begin{equation}\label{ineqn::lemP0}
\left| \frac{x}{|x|} - \frac{y}{|y|} \right| \le (1 - \varepsilon)^{-1} | x - y |.
% note to self: the theorem in the SIOPT paper says we can get a constant 2 for free (no epsilon condition) IF one of the two points is already on the circle; otherwise that's just not true.
\end{equation}
As a consequence, for any $w,v \in \mathbb{C}^n$ with $\min_k|w_k| \ge 1- \varepsilon$ and $\min_k|v_k| \ge 1- \varepsilon$, 
\begin{equation}\label{ineqn::calPLip}
d_2( \mathcal{P} w , \mathcal{P}v ) \le (1 - \varepsilon)^{-1} d_2(w , v ).
\end{equation}
\end{lemma}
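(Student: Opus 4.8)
\textbf{Proof plan for Lemma~\ref{lem::P0}.}

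The plan is to prove the scalar inequality \eqref{ineqn::lemP0} first, then derive the vector statement \eqref{ineqn::calPLip} by applying it entrywise and taking the appropriate minimum over global phases. For the scalar inequality, I would write the difference of normalized complex numbers as
\begin{align*}
	\frac{x}{|x|} - \frac{y}{|y|} = \frac{x - y}{|x|} + y\left(\frac{1}{|x|} - \frac{1}{|y|}\right) = \frac{x-y}{|x|} + \frac{y}{|x||y|}\big(|y| - |x|\big).
\end{align*}
Taking moduli and using $\big||y| - |x|\big| \le |x-y|$ (the reverse triangle inequality) together with $|y| = \big|\,y/|y|\,\big|\cdot|y|$, the second term is bounded by $|x-y|/|x|$, so $\big|\,x/|x| - y/|y|\,\big| \le 2|x-y|/|x|$. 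That gives constant $2(1-\varepsilon)^{-1}$, which is worse than claimed; so the sharper route is needed. I would instead bound directly: with $u = x/|x|$ and $v = y/|y|$ both unit-modulus, $|u - v|^2 = 2 - 2\Re(u\bar v)$, while $|x - y|^2 = |x|^2 + |y|^2 - 2|x||y|\Re(u \bar v)$. Writing $c := \Re(u\bar v) \in [-1,1]$ and $s := |x|, t := |y|$, the claim $|u-v| \le (1-\varepsilon)^{-1}|x-y|$ with $s,t \ge 1-\varepsilon$ becomes $(1-\varepsilon)^2(2 - 2c) \le s^2 + t^2 - 2st\, c$, i.e. $s^2 + t^2 - 2(1-\varepsilon)^2 \ge 2c\,(st - (1-\varepsilon)^2)$. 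Since $st \ge (1-\varepsilon)^2$, the right side is at most $2(st - (1-\varepsilon)^2)$, so it suffices to check $s^2 + t^2 - 2(1-\varepsilon)^2 \ge 2st - 2(1-\varepsilon)^2$, which is just $(s-t)^2 \ge 0$. This settles \eqref{ineqn::lemP0}.

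For the vector consequence, I would fix $w, v \in \mathbb{C}^n$ with all entries of modulus at least $1-\varepsilon$. Pick $\theta^\star$ achieving $d_2(w, v) = \|w e^{i\theta^\star} - v\|_2$. Since $\mathcal{P}(w e^{i\theta^\star}) = (\mathcal{P}w)\, e^{i\theta^\star}$ (the map $\mathcal{P}$ commutes with multiplication by a global phase, as every entry is nonzero here), we get
\begin{align*}
	d_2(\mathcal{P}w, \mathcal{P}v) \le \big\| (\mathcal{P}w)\,e^{i\theta^\star} - \mathcal{P}v \big\|_2 = \big\| \mathcal{P}(w e^{i\theta^\star}) - \mathcal{P}v \big\|_2.
\end{align*}
Now apply \eqref{ineqn::lemP0} entrywise to $x = (w e^{i\theta^\star})_k$ and $y = v_k$ (both still have modulus $\ge 1-\varepsilon$), square, sum over $k$, and take square roots:
\begin{align*}
	\big\| \mathcal{P}(w e^{i\theta^\star}) - \mathcal{P}v \big\|_2 \le (1-\varepsilon)^{-1} \big\| w e^{i\theta^\star} - v \big\|_2 = (1-\varepsilon)^{-1} d_2(w, v),
\end{align*}
which is \eqref{ineqn::calPLip}.

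The computation is elementary throughout; the only place demanding a moment's care is getting the \emph{sharp} constant $(1-\varepsilon)^{-1}$ rather than the lazy $2(1-\varepsilon)^{-1}$, which is why I reduce to the inequality $(s-t)^2 \ge 0$ above rather than splitting the difference into a "radial" and "angular" part. The other mild subtlety is the commutation $\mathcal{P}(w e^{i\theta}) = (\mathcal{P}w)e^{i\theta}$, which is exactly where the hypothesis that all entries are nonzero (guaranteed by $|w_k| \ge 1-\varepsilon > 0$ when $\varepsilon < 1$) is used; for $\varepsilon = 0$ the statement is the trivial $\mathcal{P}$ being $1$-Lipschitz on unit-modulus vectors. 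I do not anticipate any real obstacle beyond these bookkeeping points.
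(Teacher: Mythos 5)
Your proposal is correct and follows essentially the same route as the paper: the scalar bound via the identity $|x-y|^2 = |x|^2+|y|^2-2|x||y|\Re(u\bar v)$ together with $c\le 1$, $|x||y|\ge(1-\varepsilon)^2$ and the AM--GM step (your $(s-t)^2\ge 0$ is exactly the paper's $|x|^2+|y|^2\ge 2|x||y|$), and then the vector claim by applying the scalar inequality entrywise and minimizing over the global phase, using that $\mathcal{P}$ commutes with phase multiplication. No gaps.
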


This lemma says $\mathcal{P}$ is Lipschitz continuous (in the quotient space $\mathbb{C}^n / \! \sim$) in a region where $|w_k|$ and $|z_k|$ are uniformly lower bounded. The composition $\mathcal{P} \mathcal{L}$, will have the local contraction property as long as the contraction ratio in (\ref{ineqn::ctr}) is small enough, and the $\varepsilon$ in (\ref{ineqn::calPLip}) is not too close to $1$. 
The next lemma formalizes this result. For later use in the proofs, we also introduce an additional notation: for a Hermitian matrix $W' \in \mathbb{C}^{n \times n}$, we let $\mathcal{L}' = (z z^* + \sigma W')/n$.

\begin{lemma}\label{lem::ctrT}
Suppose $x,y \in \sqrt{n}\, \mathbb{S}^{n-1}$, $K_1,K_2>0$ and $\varepsilon_1,\varepsilon_2 \in (0,1/2)$ with
\begin{align}
& d_2( x , z ) \le \varepsilon_1 \sqrt{n}, & &d_2( y , z ) \le \varepsilon_2 \sqrt{n}, \label{ineqn::ctr1}\\
& \| W x \|_{\infty} \le K_1 \sqrt{n \log n}, & & \| W' y \|_{\infty} \le K_2 \sqrt{n \log n}. \label{ineqn::ctr2}
\end{align}
Let $\varepsilon = \max\{ \varepsilon_1^2/2 + K_1 \sigma \sqrt{\log n / n},  \varepsilon_2^2/2 + K_2 \sigma \sqrt{\log n / n}\}$. If $\varepsilon < 1$, then
\begin{equation}\label{ineqn::ctrT1}
d_2( \mathcal{P} \mathcal{L} x , \mathcal{P} \mathcal{L}' y  ) \le ( 1 - \varepsilon)^{-1} d_2 (\mathcal{L}x , \mathcal{L}'y ).
\end{equation}
In particular, when $\mathcal{L} = \mathcal{L}'$, if $\varepsilon < 1$ we have 
\begin{equation}\label{ineqn::ctrT2}
d_2( \mathcal{P} \mathcal{L} x , \mathcal{P} \mathcal{L} y  ) \le \rho \cdot d_2( x , y ),
\end{equation}
where $\rho = (1 - \varepsilon)^{-1}( 6 \max\{ \varepsilon_1, \varepsilon_2\} + \sigma \| W \|_2 / n )$.
\end{lemma}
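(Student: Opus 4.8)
The plan is to obtain Lemma~\ref{lem::ctrT} by composing the Lipschitz bound for $\mathcal{L}$ from Lemma~\ref{lem::ctr} with the Lipschitz bound for the entrywise projection $\mathcal{P}$ from Lemma~\ref{lem::P0}. The only substantive step is to verify the hypothesis of Lemma~\ref{lem::P0}, namely that every coordinate of $\mathcal{L}x$ and of $\mathcal{L}'y$ has modulus at least $1-\varepsilon$. Once this is established, both~\eqref{ineqn::ctrT1} and~\eqref{ineqn::ctrT2} follow by chaining inequalities.

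First I would convert the $d_2$-hypotheses into inner-product bounds. Since $\|x\|_2 = \|y\|_2 = \|z\|_2 = \sqrt n$, the identity $d_2(x,z)^2 = 2n - 2|z^*x|$ from~\eqref{eq:dtwo} together with $d_2(x,z) \le \varepsilon_1\sqrt n$ gives $|z^*x|/n \ge 1 - \varepsilon_1^2/2$, and likewise $|z^*y|/n \ge 1 - \varepsilon_2^2/2$. Then, decomposing $\mathcal{L}x = \frac{z^*x}{n}\,z + \frac{\sigma}{n}\,Wx$ and using $|z_k| = 1$ and the triangle inequality coordinatewise,
\[
|(\mathcal{L}x)_k| \;\ge\; \frac{|z^*x|}{n} - \frac{\sigma}{n}\,|(Wx)_k| \;\ge\; \Bigl(1-\frac{\varepsilon_1^2}{2}\Bigr) - K_1\,\sigma\sqrt{\frac{\log n}{n}} \;\ge\; 1-\varepsilon
\]
for every $k\in[n]$, where the middle inequality uses $\|Wx\|_\infty \le K_1\sqrt{n\log n}$ and the last uses the definition of $\varepsilon$. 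The same computation applied to $\mathcal{L}'y = \frac{z^*y}{n}\,z + \frac{\sigma}{n}\,W'y$ with $\|W'y\|_\infty \le K_2\sqrt{n\log n}$ yields $|(\mathcal{L}'y)_k| \ge 1-\varepsilon$ for all $k$.

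With these modulus lower bounds and $\varepsilon < 1$, Lemma~\ref{lem::P0} applied to $w = \mathcal{L}x$ and $v = \mathcal{L}'y$ gives $d_2(\mathcal{P}\mathcal{L}x, \mathcal{P}\mathcal{L}'y) \le (1-\varepsilon)^{-1}\,d_2(\mathcal{L}x, \mathcal{L}'y)$, which is exactly~\eqref{ineqn::ctrT1}. For the specialization $\mathcal{L} = \mathcal{L}'$, I would then apply Lemma~\ref{lem::ctr} with common radius $\max\{\varepsilon_1,\varepsilon_2\}\in(0,1/2)$ to obtain $d_2(\mathcal{L}x,\mathcal{L}y) \le \bigl(6\max\{\varepsilon_1,\varepsilon_2\} + \sigma\|W\|_2/n\bigr)\,d_2(x,y)$, and combine with the previous bound to reach~\eqref{ineqn::ctrT2} with $\rho = (1-\varepsilon)^{-1}\bigl(6\max\{\varepsilon_1,\varepsilon_2\} + \sigma\|W\|_2/n\bigr)$.

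There is no genuine obstacle here; the statement is a short corollary of the two preceding lemmas. The one point deserving care is the coordinatewise modulus lower bound: it is essential to phrase it through the phase-invariant quantity $|z^*x|$ rather than through $x$ itself, and to observe that the ``signal part'' $\frac{z^*x}{n}\,z$ has the same modulus $|z^*x|/n$ in every coordinate, so that the ``noise part'' $\frac{\sigma}{n}Wx$ --- possibly large in $\ell_2$ but controlled in $\ell_\infty$ by hypothesis --- can erode it by at most $K_1\sigma\sqrt{\log n/n}$ in each entry.
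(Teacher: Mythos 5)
Your proposal is correct and follows essentially the same route as the paper's proof: establish the entrywise lower bound $|(\mathcal{L}x)_k|, |(\mathcal{L}'y)_k| \ge 1-\varepsilon$ via the identity $d_2(x,z)^2 = 2(n-|z^*x|)$ and the $\ell_\infty$ hypotheses, invoke Lemma~\ref{lem::P0}, and then obtain~\eqref{ineqn::ctrT2} by combining with Lemma~\ref{lem::ctr} applied with radius $\max\{\varepsilon_1,\varepsilon_2\}$. No gaps.
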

This deterministic lemma states that $\mathcal{T} := \mathcal{P} \mathcal{L}$ has the local contraction property in a region where (\ref{ineqn::ctr1}) and (\ref{ineqn::ctr2}) are satisfied, as long as $\rho < 1$. Note that we have to require $\ell_\infty$ bounds in~\eqref{ineqn::ctr2} because of the entry-wise nature of $\mathcal{P}$. 
In the next subsection, we use this lemma to show that, with high probability, $d_2(\calT x^{t}, \calT x^{t,m})$ is controlled by $\rho \cdot d_2(x^{t}, x^{t,m})$ where the ratio $\rho$ lies in $(0,1)$.

\subsection*{Convergence analysis}

Let us denote the rescaled matrix $C^{(m)}/n$ by $\mathcal{L}^{(m)}$, where $m\in [n]$. Also let $\mathcal{T}^{(m)} = \mathcal{P} \mathcal{L}^{(m)}$. Recall that $x^0$ is identical to the leading eigenvector $\tilde{x}$ of $\mathcal{L}$, and $x^{0,m}$ is identical to the leading eigenvector $\tilde{x}^{(m)}$ of $\mathcal{L}^{(m)}$; and they are normalized such that $\| x^0 \|_2 = \| x^{0,m} \|_2 = \sqrt{n}$. Algorithm~\ref{algo:GPM} iterates $x^{t+1} = \mathcal{T} x^t$. The auxiliary sequences defined for theoretical analysis (not implemented in practice) follow a similar update rule: $x^{t+1,m} = \mathcal{T}^{(m)} x^{t,m}$ (Figure~\ref{fig:contraction}). Note that, for all $t \ge 1$, each entry of $x^t$ and $x^{t,m}$ has unit modulus, i.e., $x^t, x^{t,m} \in \mathbb{C}_1^n $, but $x^0, x^{0,m}$ are not in $\mathbb{C}_1^n$ in general.

As shown in Lemma \ref{lem::ctrT}, the local contraction property of $\mathcal{T}$ hinges on the condition that the vectors to be updated are in the contraction region $\mathcal{N} = \mathcal{N}_1 \bigcap  \mathcal{N}_2$, where $\mathcal{N}_1$ and $\mathcal{N}_2$ are defined in (\ref{eq:None}) and (\ref{eq:Ntwo}). 
The absolute constants $\kappa_2, \kappa_3>0$ in their definitions will be specified in Theorem \ref{thm::indct}.

In order to show the iterates $x^t$ stay in $\mathcal{N}$, we analyze the dependence between the random quantities $\mathcal{T}$ and $x^t$ by making use of the auxiliary sequences $x^{t,m}$, as illustrated by eq.~(\ref{eq:inftyboundsplit2}). As shown in the  analysis of eigenvectors in Section~\ref{sec:eigen}, we know $d_2(x^0, x^{0,m})$ is small. Owing to the local contraction property (Lemma~\ref{lem::ctrT}), we can prove the recursive error bound~(\ref{recurProx}), which ensures that $d_2(x^{t,m}, x^t)$ is bounded throughout all iterations. The analysis is based on induction.

As discussed in Section \ref{sec:intro}, a technical issue is that we cannot use concentration results infinitely many times for all $t=1,2,\ldots$, because we use the union bound to achieve a high probability result. We study first $T+1 := 3n^2$ iterates using concentration results, and resort to deterministic analysis for later iterations. In the following theorems, constants $C_1', C_2'$ are those constants in Lemmas~\ref{lem::conctr2} and~\ref{lem::conctr1}.

\begin{theorem}\label{thm::indct}
Suppose $n \ge 2$ and $\sigma$ satisfies
\begin{equation}\label{ineqn::sigmaBound}
\sigma \le \min \left\{\frac{\sqrt{n}}{120\sqrt{2}C_2'}, \frac{1}{240\kappa_2}\sqrt{\frac{n}{\log n}} \right\}.
\end{equation}
Then, with probability $1 - \mathcal{O}(n^{-2})$, for any $t$ in $\{ 0, 1, 2, \ldots, T\}$,
\begin{align}
& d_2( x^{t,m} , x^t ) \le \kappa_1,  \quad \forall \, m \in [n], \label{ineqn::indct1} \\
& \| W x^t \|_{\infty} \le \kappa_2 \sqrt{n \log n},  \label{ineqn::indct2}\\
& d_2( x^t , z ) \le  \kappa_3\sqrt{n}. \label{ineqn::indct3}
\end{align}
Here $\kappa_1,\kappa_2, \kappa_3$ are absolute constants: $\kappa_1 = \kappa_3 = 1/60$ and  $\kappa_2 =  4C_1' + 2C_2' \kappa_1$.
\end{theorem}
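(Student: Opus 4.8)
The plan is to prove \eqref{ineqn::indct1}--\eqref{ineqn::indct3} by induction on $t$, establishing the three properties \emph{simultaneously} and on a single favorable event. That event is the intersection of: the spectral-norm bounds of Lemma~\ref{lem::conctr2}; the bounds of Lemma~\ref{lem::conctr1} applied both to the singletons $\mathcal{U}^{(m)} = \{\tilde x^{(m)}\}$ and to the sets $\mathcal{U}^{(m)} = \{x^{s,m}\}_{s=1}^{T}$ for each $m$ (here $T = 3n^2-1$ is chosen precisely so that $|\mathcal{U}^{(m)}| \le 3n^2$, and each $x^{s,m}$ is a function of $C^{(m)}$ alone, hence independent of $\Delta W^{(m)}$); and a Hoeffding/union bound giving $\|Wz\|_\infty = \mathcal{O}(\sqrt{n\log n})$. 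Each of these holds with probability $1 - \mathcal{O}(n^{-2})$, so their intersection does too, and on it the induction below is entirely deterministic.

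For the base case $t=0$, recall $x^0 = \tilde x$ and $x^{0,m} = \tilde x^{(m)}$ are the rescaled leading eigenvectors of $C$ and $C^{(m)}$. The $\ell_2$ bound $d_2(x^0,z) \le \kappa_3\sqrt n$ follows from Davis--Kahan (Lemma~\ref{lem::ptb}) with $A = zz^*$, $E = \sigma W$, using $\|Wz\|_2 \le \|W\|_2\|z\|_2 \le C_2' n$ and an eigengap $n - \sigma\|W\|_2 \gtrsim n$. The proximity bound $d_2(x^0,x^{0,m}) \le \kappa_1$ follows from the \emph{same} lemma applied instead to $A = C^{(m)}$, $E = \sigma\Delta W^{(m)}$: since the $m$th row of $W^{(m)}$ vanishes one has $|\tilde x^{(m)}_m| = |z^*\tilde x^{(m)}|/\lambda_1(C^{(m)}) \le 2$, so Lemma~\ref{lem::conctr1} on the singleton $\{\tilde x^{(m)}\}$ gives $\|\Delta W^{(m)}\tilde x^{(m)}\|_2 = \mathcal{O}(\sqrt{n\log n})$, whence $d_2(x^0,x^{0,m}) = \mathcal{O}(\sigma\sqrt{\log n/n}) \le \kappa_1$ under the stated ceiling on $\sigma$. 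Finally $\|Wx^0\|_\infty \le \kappa_2\sqrt{n\log n}$ follows from the split \eqref{ineqn::eigKey}, bounding $|w_m^*\tilde x^{(m)}| \le \|\Delta W^{(m)}\tilde x^{(m)}\|_2$ and $\|w_m\|_2\, d_2(\tilde x,\tilde x^{(m)}) \le C_2'\sqrt n\,\kappa_1$.

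For the inductive step $t \to t+1$ (with $t \le T-1$), I would prove the three bounds at $t+1$ in this order. (i) \emph{$\ell_2$ bound:} write $d_2(x^{t+1},z) \le d_2(\mathcal{T}x^t,\mathcal{T}z) + d_2(\mathcal{T}z, z)$; the first term is $\le \rho\, d_2(x^t,z) \le \rho\kappa_3\sqrt n$ with $\rho<1$ a small absolute constant, by Lemma~\ref{lem::ctrT} (eq.~\eqref{ineqn::ctrT2}) applied to $x=x^t$, $y=z$ against $\mathcal{L}=C/n$ --- its hypotheses are the inductive $\ell_\infty$ bound on $x^t$, the Hoeffding bound on $\|Wz\|_\infty$, and $d_2(x^t,z)/\sqrt n \le \kappa_3$; the second term equals $d_2(\mathcal{P}\mathcal{L}z, z) = \mathcal{O}(\sigma\|Wz\|_2/n) = \mathcal{O}(\sigma)$ via $\mathcal{L}z = z + (\sigma/n)Wz$ and Lemma~\ref{lem::P0}, and the two sum to $\le\kappa_3\sqrt n$. (ii) \emph{Proximity:} by \eqref{recurProx},
\begin{equation*}
d_2(x^{t+1},x^{t+1,m}) \le \rho\, d_2(x^t,x^{t,m}) + d_2\!\bigl(\mathcal{T}x^{t,m}, \mathcal{T}^{(m)}x^{t,m}\bigr),
\end{equation*}
where the first term is $\le \rho\kappa_1$ by Lemma~\ref{lem::ctrT} (eq.~\eqref{ineqn::ctrT2}) applied to $x=x^t$, $y=x^{t,m}$ --- the required bounds on $x^{t,m}$, namely $d_2(x^{t,m},z) \le 2\kappa_3\sqrt n$ and an $\ell_\infty$ control of the form $\|W^{}x^{t,m}\|_\infty,\|W^{(m)}x^{t,m}\|_\infty \le K\sqrt{n\log n}$, follow from the inductive hypotheses, Lemma~\ref{lem::conctr2}, and Lemma~\ref{lem::conctr1} --- while the discrepancy term is $\le (1-\varepsilon)^{-1}(\sigma/n)\|\Delta W^{(m)}x^{t,m}\|_2$ by Lemma~\ref{lem::ctrT} (eq.~\eqref{ineqn::ctrT1}); crucially $x^{t,m}$ is independent of $\Delta W^{(m)}$, so Lemma~\ref{lem::conctr1} gives $\|\Delta W^{(m)}x^{t,m}\|_2 = \mathcal{O}(\sqrt{n\log n})$ and the discrepancy is $\mathcal{O}(\sigma\sqrt{\log n/n}) = \mathcal{O}(1)$, small enough that $\rho\kappa_1 + \mathcal{O}(1) \le \kappa_1$. (iii) \emph{$\ell_\infty$ bound:} by \eqref{eq:inftyboundsplit2}, $|(Wx^{t+1})_m| \le |w_m^*x^{t+1,m}| + \|w_m\|_2\,d_2(x^{t+1},x^{t+1,m})$; since $x^{t+1,m} \in \{x^{s,m}\}_{s=1}^{T} \subset \Cn_1$ is independent of $\Delta W^{(m)}$, Lemma~\ref{lem::conctr1} (eq.~\eqref{ineqn::conctr1new}) bounds $|w_m^*x^{t+1,m}| \le \|\Delta W^{(m)}x^{t+1,m}\|_2 \le C_1'\sqrt{n\log n}$, while $\|w_m\|_2\, d_2(x^{t+1},x^{t+1,m}) \le C_2'\sqrt n\,\kappa_1$ by Lemma~\ref{lem::conctr2} and the proximity bound just established; summing gives $|(Wx^{t+1})_m| \le (4C_1' + 2C_2'\kappa_1)\sqrt{n\log n} = \kappa_2\sqrt{n\log n}$.

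The main obstacle is that the argument is circular on its face: the proximity bound at step $t+1$ rests on $\mathcal{P}\mathcal{L}$ contracting $d_2$ (Lemma~\ref{lem::ctrT}), which in turn demands that $x^t$ and $x^{t,m}$ lie in the contraction region $\calN$, whose $\ell_\infty$ part is obtained only through the proximity bound via \eqref{eq:inftyboundsplit2}. The induction is precisely the mechanism that breaks this loop, carrying all three properties forward together. What remains is careful bookkeeping: one must check that every explicit constant closes --- $\rho<1$, the discrepancy bound, and the three closing inequalities $\rho\kappa_1 + (\text{discrepancy}) \le \kappa_1$, $\rho\kappa_3\sqrt n + \mathcal{O}(\sigma) \le \kappa_3\sqrt n$, and $(4C_1'+2C_2'\kappa_1)\sqrt{n\log n} \le \kappa_2\sqrt{n\log n}$ --- with the specific choices $\kappa_1 = \kappa_3 = 1/60$ and $\kappa_2 = 4C_1' + 2C_2'\kappa_1$ under the stated bound on $\sigma$, where the margins are genuinely tight; and one must be scrupulous that each auxiliary iterate $x^{t,m}$ depends only on $C^{(m)}$ (hence is independent of $\Delta W^{(m)}$), and that the sets passed to Lemma~\ref{lem::conctr1} have non-random cardinality at most $3n^2$.
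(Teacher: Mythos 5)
Your proposal is correct and follows essentially the same route as the paper: the same favorable event built from Lemmas~\ref{lem::conctr2} and~\ref{lem::conctr1} applied to the sets $\{x^{s,m}\}$ of non-random cardinality at most $3n^2$, the same Davis--Kahan base case, and the same deterministic induction carrying proximity, the $\ell_\infty$ bound via \eqref{eq:inftyboundsplit2}, and the $\ell_2$ bound simultaneously, with the discrepancy term controlled exactly as in \eqref{recurProx}. The only real deviation is your sub-step for $d_2(x^{t+1},z)$, which contracts toward $z$ via Lemma~\ref{lem::ctrT} and adds the drift $d_2(\mathcal{T}z,z)=\mathcal{O}(\sigma)$ (needing an extra concentration bound on $\|Wz\|_\infty$, available e.g.\ from Lemma~\ref{lem::conctr1} with $\mathcal{U}^{(m)}=\{z\}$), whereas the paper bounds $d_\infty(\mathcal{L}x^t,z)$ entrywise and applies Lemma~\ref{lem::P0} together with $d_2 \le \sqrt{n}\,d_\infty$; with the paper's constants $\kappa_1=\kappa_3=1/60$, $\kappa_2=4C_1'+2C_2'\kappa_1$ and the bound \eqref{ineqn::sigmaBound}, your variant also closes numerically ($\rho\kappa_3\sqrt{n}+2C_2'\sigma$ comfortably below $\kappa_3\sqrt{n}$), so either sub-route completes the induction.
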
 

Note that (\ref{ineqn::indct2}) and (\ref{ineqn::indct3}) guarantee $\{x^t \}_{t=0}^T \subset \mathcal{N}$.
Considering~\eqref{recurProx} from our proof map,
\begin{equation*}
d_2( x^{t+1,m} , x^{t+1} ) \le d_2( \mathcal{T} x^{t,m} , \mathcal{T} x^t ) + d_2( \mathcal{T}^{(m)} x^{t,m} ,  \mathcal{T} x^{t,m}).
\end{equation*}

Assuming~(\ref{ineqn::indct2}) and (\ref{ineqn::indct3}) for the case $t$, by the local contraction property (Lemma~\ref{lem::ctrT}), the first term is bounded by $\rho \cdot d_2( x^{t,m} ,  x^t )$ where $\rho < 1$. By the concentration bounds (Lemma~\ref{lem::conctr1}), the second term is bounded by $ \mathcal{O} ( \sigma \| \Delta W^{(m)} x^{t,m} \|_2 / n) =  \mathcal{O}(\sigma \sqrt{\log n / n}) = \mathcal{O}(1)$ with high probability. 
Therefore, it is expected that (\ref{ineqn::indct1}) continues to hold for the case $t+1$.

This proximity property (\ref{ineqn::indct1}) is crucial to show that $x^{t+1}$ stays in the local region $\mathcal{N}$.
To bound $\| W x^{t+1} \|_{\infty}$, we use the concentration bounds (Lemma~\ref{lem::conctr1}) and the proximity property~(\ref{ineqn::indct1}) in the inequality~(\ref{eq:inftyboundsplit2}). 

To bound $d_2(x^{t+1}, z)$, we derive an entry-wise bound on $\mathcal{L}x^t$, then use Lemma~\ref{lem::P0}. This is straightforward once we have a bound on $\| W x^t \|_{\infty}$.

The next result says $d_2( x^{t+1} , x^t )$ decreases geometrically for $t = 0, \ldots, T-1$, which is notably useful to analyze later iterations $(t \geq T)$.
\begin{theorem}\label{thm::geoDcr}
Under the same assumption as in Theorem \ref{thm::indct}, with probability $1 - \mathcal{O}(n^{-2})$, we have
\begin{equation}\label{ineqn::cauchy}
d_2( x^{t+1} ,  x^{t} ) \le \frac{1}{2}\, d_2( x^t , x^{t-1} ), \qquad \forall t \in \{1, 2, \ldots, T-1\},
\end{equation}
and as a consequence, $d_2( x^{T} ,  x^{T-1}) \le 2^{2-T} \sqrt{n}$.
\end{theorem}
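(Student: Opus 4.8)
The plan is to recognize Theorem~\ref{thm::geoDcr} as essentially a corollary of the deterministic contraction estimate in Lemma~\ref{lem::ctrT}, once the hypotheses of that lemma are supplied by Theorem~\ref{thm::indct} and Lemma~\ref{lem::conctr2}, and then to unroll the resulting recursion.

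First I would work on the event of probability $1 - \mathcal{O}(n^{-2})$ on which the conclusions of both Lemma~\ref{lem::conctr2} and Theorem~\ref{thm::indct} hold. On this event one has $\|W\|_2 \le C_2'\sqrt{n}$, and for every $t \in \{0,1,\ldots,T\}$ the iterate $x^t$ lies in $\sqrt{n}\,\mathbb{S}^{n-1}$ (because $\|x^0\|_2 = \sqrt{n}$ and $x^t \in \Cn_1$ for $t \ge 1$), satisfies $d_2(x^t, z) \le \kappa_3\sqrt{n}$, and satisfies $\|Wx^t\|_\infty \le \kappa_2\sqrt{n\log n}$.

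Next, for a fixed $t \in \{1,\ldots,T-1\}$, I would apply Lemma~\ref{lem::ctrT} with $x = x^t$, $y = x^{t-1}$, $W' = W$ (so $\mathcal{L}' = \mathcal{L}$ and we are in the regime of~\eqref{ineqn::ctrT2}), and parameters $\varepsilon_1 = \varepsilon_2 = \kappa_3 = 1/60$ and $K_1 = K_2 = \kappa_2$. The quantity $\varepsilon$ of that lemma then equals $\kappa_3^2/2 + \kappa_2\sigma\sqrt{\log n/n}$; the second half of the constraint~\eqref{ineqn::sigmaBound} gives $\kappa_2\sigma\sqrt{\log n/n} \le 1/240$, so $\varepsilon \le 1/7200 + 1/240 < 1$ and $(1-\varepsilon)^{-1} < 1.01$. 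The contraction ratio $\rho = (1-\varepsilon)^{-1}\big(6\kappa_3 + \sigma\|W\|_2/n\big)$ is then bounded using $6\kappa_3 = 1/10$ together with $\sigma\|W\|_2/n \le \sigma C_2'/\sqrt{n} \le 1/(120\sqrt{2})$, the last inequality coming from the first half of~\eqref{ineqn::sigmaBound} and $\|W\|_2 \le C_2'\sqrt{n}$; a direct numerical check then gives $\rho < 1/2$. Since $x^{t+1} = \mathcal{P}\mathcal{L}x^t$ and $x^t = \mathcal{P}\mathcal{L}x^{t-1}$, inequality~\eqref{ineqn::ctrT2} yields $d_2(x^{t+1}, x^t) \le \rho\, d_2(x^t, x^{t-1}) \le \tfrac12 d_2(x^t, x^{t-1})$, which is~\eqref{ineqn::cauchy}.

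Finally, iterating~\eqref{ineqn::cauchy} from $t = T-1$ down to $t = 1$ gives $d_2(x^T, x^{T-1}) \le 2^{-(T-1)}\, d_2(x^1, x^0)$, and the crude estimate $d_2(x^1, x^0) \le \|x^1\|_2 + \|x^0\|_2 = 2\sqrt{n}$ delivers $d_2(x^T, x^{T-1}) \le 2^{2-T}\sqrt{n}$. I do not expect a genuine obstacle: the argument is routine given the earlier lemmas, and the only point needing attention is to check that the specific constants $\kappa_2,\kappa_3$ from Theorem~\ref{thm::indct} together with the two-sided bound~\eqref{ineqn::sigmaBound} on $\sigma$ push $\rho$ strictly below $1/2$ (rather than merely below $1$) — a slightly stronger margin than the bare convergence statement would require.
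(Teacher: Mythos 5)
Your proposal is correct and follows essentially the same route as the paper: invoke Lemma~\ref{lem::ctrT} (via the conclusions of Theorem~\ref{thm::indct} and the bound $\|W\|_2 \le C_2'\sqrt{n}$ from Lemma~\ref{lem::conctr2}) to get a contraction ratio $\rho < 1/2$ under~\eqref{ineqn::sigmaBound}, then unroll the recursion and use $d_2(x^1,x^0) \le 2\sqrt{n}$. The only cosmetic difference is that the paper bounds $(1-\varepsilon)^{-1}$ by $2$ rather than by $1.01$, which changes nothing of substance.
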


This result is similar to the contraction mapping theorem (though we cannot prove $\mathcal{T}x \in \mathcal{N}$ for all $x \in \mathcal{N}$), which says the sequence produced by a contraction mapping is a Cauchy sequence and satisfies an inequality similar to (\ref{ineqn::cauchy}). After $T = 3n^2 - 1$ iterations, the update from $x^t$ to $x^{t+1}$ is almost negligible. Although we no longer rely on concentration results, we will show, by induction again, that $d_2(x^t, x^{T})$ remains very small for all $t \ge T$. This ensures that $x^t$ stays in a slightly larger contraction region for all $t$ (with larger constants). The next theorem depends crucially on the conclusions of Theorem \ref{thm::indct} and \ref{thm::geoDcr}.

\begin{theorem}\label{thm::finCnvg}
Suppose $\|W\|_2 \le C_2' \sqrt{n}$, $\| W x^{T-1} \|_{\infty} \le \kappa_2 \sqrt{n \log n}$, $ d_2( x^{T-1} , z ) \le \kappa_3 \sqrt{n}$ and $\sigma$ is bounded
as in Theorem \ref{thm::indct}.
Also suppose $ d_2( x^T , x^{T-1} ) \le \kappa_3/4$. Then,
\begin{equation}\label{ineqn::y}
d_2( x^{T+k} , x^{T+k-1} ) \le 2^{-k} d_2( x^T , x^{T-1} ), \qquad\, \forall \, k \ge 0.
\end{equation}
Furthermore, in the quotient space $\mathbb{C}_1^n / \! \sim$ equipped with distance $d_2$, the sequence $[x^t]$ $(t \ge 1)$ converges to a limit $[x^\infty]$ where $x^\infty \in \mathbb{C}_1^n$ is a fixed point of $\mathcal{T}$, i.e.,  $\mathcal{T} x^{\infty} = x^{\infty}$. This fixed point satisfies
\begin{align}
d_2( x^{\infty} , z ) & \le \frac{3}{2}\kappa_3\sqrt{n},  \textrm{ and } & \|Wx^{\infty} \|_{\infty} & \le (\kappa_2 + C_2' \kappa_3) \sqrt{n \log n}.
\label{ineqn::Wyinf}
\end{align}
Moreover,
\begin{align}
C x^{\infty} & = \diag(\mu) x^{\infty},
\label{eqn::eigDual}
\end{align}
where $\mu_k = | (C x^{\infty})_k |$.
\end{theorem}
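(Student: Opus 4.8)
The plan is to split the argument into three parts matching the three assertions: (i) the geometric decay~\eqref{ineqn::y} for the tail iterates $t \ge T$, (ii) convergence of $[x^t]$ in the complete metric space $(\mathbb{C}_1^n/\!\sim, d_2)$ to a fixed point of $\mathcal{T}$ satisfying the bounds~\eqref{ineqn::Wyinf}, and (iii) the eigen-like relation~\eqref{eqn::eigDual}. For part (i), I would run an induction on $k$. The base case $k=0$ is trivial, and $k=1$ uses the hypothesis $d_2(x^T,x^{T-1}) \le \kappa_3/4$ together with the triangle inequality to locate $x^T$ inside a slightly enlarged version of $\calN_2$ (namely $d_2(x^T,z) \le \kappa_3\sqrt n + \kappa_3/4$), and an enlarged version of $\calN_1$ obtained from~\eqref{eq:inftyboundsplit2}-type reasoning or directly from $\|Wx^T\|_\infty \le \|Wx^{T-1}\|_\infty + \|W\|_2 d_2(x^T,x^{T-1})$ using $\|W\|_2 \le C_2'\sqrt n$. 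With these enlarged bounds, Lemma~\ref{lem::ctrT} applies with $\mathcal{L}=\mathcal{L}'$ to give $d_2(\mathcal{T}x^T,\mathcal{T}x^{T-1}) \le \rho\, d_2(x^T,x^{T-1})$ with $\rho < 1/2$ (here the factor $1/2$ must be checked: the $6\max\{\varepsilon_1,\varepsilon_2\}$ term is $\mathcal{O}(\kappa_3)$ which is tiny, and $\sigma\|W\|_2/n = \mathcal{O}(1/C_2')\cdot(\sigma C_2'/\sqrt n)$ is small by the $\sigma$-bound in~\eqref{ineqn::sigmaBound}, so $\rho \le 1/2$ follows for the chosen constants). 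For the inductive step, I would use that $d_2(x^{T+k},x^T) \le \sum_{j=1}^{k} d_2(x^{T+j},x^{T+j-1}) \le 2\,d_2(x^T,x^{T-1}) \le \kappa_3/2$, which keeps all tail iterates inside the enlarged contraction region, so Lemma~\ref{lem::ctrT} keeps applying and the ratio $1/2$ propagates.

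For part (ii), geometric decay of consecutive differences immediately gives that $\{[x^t]\}$ is a Cauchy sequence in $(\mathbb{C}_1^n/\!\sim, d_2)$: indeed $d_2(x^{T+k'},x^{T+k}) \le \sum_{j>k}^{} 2^{-j} d_2(x^T,x^{T-1}) \to 0$. Completeness of the quotient space (which I would establish by noting that $\mathbb{C}_1^n$ is compact, hence complete, and that $\sim$ is a closed equivalence relation, so the quotient is a complete metric space — this is the "more details will follow" promise from the introduction) yields a limit $[x^\infty]$. Continuity of $\mathcal{T}$ on the relevant region (it is $d_2$-Lipschitz there by Lemmas~\ref{lem::P0} and~\ref{lem::ctrT}) gives $\mathcal{T}x^\infty = x^\infty$ up to phase; choosing the representative $x^\infty$ appropriately makes this an equality in $\mathbb{C}_1^n$. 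The bounds in~\eqref{ineqn::Wyinf} then follow by passing to the limit: $d_2(x^\infty,z) \le d_2(x^T,z) + d_2(x^\infty,x^T) \le \kappa_3\sqrt n + \kappa_3/2 \le \tfrac32\kappa_3\sqrt n$ for $n \ge 1$, and $\|Wx^\infty\|_\infty \le \|Wx^T\|_\infty + \|W\|_2 d_2(x^\infty,x^T) \le \kappa_2\sqrt{n\log n} + C_2'\sqrt n \cdot \kappa_3\sqrt n$; I would need to double-check the claimed form $(\kappa_2 + C_2'\kappa_3)\sqrt{n\log n}$, which requires bounding $d_2(x^\infty,x^T)$ by $\mathcal{O}(\kappa_3\sqrt{\log n})$ rather than $\mathcal{O}(\kappa_3\sqrt n)$ — this is where the smallness $d_2(x^T,x^{T-1}) \le 2^{2-T}\sqrt n$ (Theorem~\ref{thm::geoDcr}) is essential, since then $d_2(x^\infty,x^T) \le 2\cdot 2^{2-T}\sqrt n$ is exponentially small, far smaller than $\sqrt{\log n}$, so the extra term is absorbed into a negligible slack and the stated constants go through.

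For part (iii), I would unfold the definition of $\mathcal{T} = \mathcal{P}\mathcal{L}$: the fixed-point equation $x^\infty = \mathcal{P}(Cx^\infty/n) = \mathcal{P}(Cx^\infty)$ means that each entry $(Cx^\infty)_k$ is a nonnegative real multiple of $x^\infty_k$, i.e. $(Cx^\infty)_k = \mu_k x^\infty_k$ with $\mu_k = |(Cx^\infty)_k| \ge 0$ (the degenerate case $(Cx^\infty)_k = 0$ is excluded with high probability, as remarked before Lemma~\ref{lem::P0}, since $|(Cx^\infty)_k|$ is close to $n$ when $x^\infty$ is near $z$). Stacking over $k$ gives $Cx^\infty = \diag(\mu)x^\infty$, which is exactly~\eqref{eqn::eigDual}. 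The main obstacle I anticipate is bookkeeping the constants in part (i): one must verify that the enlarged region (with $\kappa_3$ replaced by roughly $\tfrac54\kappa_3$ in $\calN_2$ and $\kappa_2$ bumped by $C_2'\kappa_3$ in $\calN_1$) still satisfies the hypotheses $\varepsilon_1,\varepsilon_2 \in (0,1/2)$ and $\varepsilon < 1$ of Lemma~\ref{lem::ctrT}, and that the resulting $\rho$ stays below $1/2$ — this is routine but delicate, and it is the reason the theorem statement is phrased with the explicit slack factor $\tfrac32$ and the $d_2(x^T,x^{T-1}) \le \kappa_3/4$ hypothesis rather than cleaner constants.
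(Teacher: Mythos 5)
Most of your plan tracks the paper's proof closely: the induction on $k$ with the enlarged region $d_2(x^{T+k},z)\le \tfrac32\kappa_3\sqrt n$, $\|Wx^{T+k}\|_\infty \le (\kappa_2+C_2'\kappa_3)\sqrt{n\log n}$, the application of Lemma~\ref{lem::ctrT} with ratio below $1/2$, and passing the bounds to the limit are exactly the paper's steps (your compactness route to completeness of $\mathbb{C}_1^n/\!\sim$ is a legitimate alternative to the paper's explicit phase-alignment construction, and your worry about $d_2(x^\infty,x^T)$ is unnecessary: the hypothesis $d_2(x^T,x^{T-1})\le\kappa_3/4$ already gives $d_2(x^\infty,x^T)\le\kappa_3/2$, and $C_2'\sqrt n\cdot\kappa_3/2\le C_2'\kappa_3\sqrt{n\log n}$ for $n\ge 2$, so no appeal to the $2^{2-T}\sqrt n$ bound is needed inside this deterministic theorem).

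The genuine gap is in your passage from convergence to the \emph{exact} fixed-point equation $\calT x^\infty=x^\infty$. Convergence in the quotient only yields $d_2(\calT x^\infty,x^\infty)=0$, i.e.\ $\calT x^\infty=e^{i\theta}x^\infty$ for some $\theta$, and your claim that ``choosing the representative appropriately makes this an equality'' is false: since $\calT(e^{i\phi}x)=\mathcal{P}(e^{i\phi}\mathcal{L}x)=e^{i\phi}\calT x$, the operator is phase-equivariant, so the offset $e^{i\theta}$ is identical for every representative of $[x^\infty]$ and cannot be absorbed by re-phasing. Consequently your part (iii) only yields $Cx^\infty=e^{-i\theta}\diag(\mu)x^\infty$, which is not~\eqref{eqn::eigDual} and would break the optimality verification (there one needs $\Re\{(Cx^\infty)_k\overline{x^\infty_k}\}=|(Cx^\infty)_k|$, whereas a nontrivial phase gives $\mu_k\cos\theta$, possibly negative). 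The paper closes this with a separate argument you are missing: taking the inner product of $e^{i\theta}\mathcal{L}x^\infty=n^{-1}\diag(\mu)x^\infty$ with $x^\infty$ gives $e^{i\theta}\,(x^\infty)^*\mathcal{L}x^\infty=\sum_k\mu_k/n>0$; since $\mathcal{L}$ is Hermitian the Rayleigh quotient is real, forcing $e^{i\theta}=\pm1$, and the bound $d_2(x^\infty,z)\le\tfrac32\kappa_3\sqrt n$ together with $\sigma\|W\|_2\le C_2'\sigma\sqrt n$ and~\eqref{ineqn::sigmaBound} shows $(x^\infty)^*\mathcal{L}x^\infty>0$, so $e^{i\theta}=1$. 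You need to add this (or an equivalent) positivity argument before your entrywise unfolding of $\mathcal{P}$ in part (iii) is justified.
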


Under the stated conditions, this theorem is a deterministic result. By Theorems~\ref{thm::indct} and~\ref{thm::geoDcr}, the conditions hold with high probability (note that $2^{2-T} \sqrt{n} \le \kappa_3/4$ when $n\ge 2$).
%Under these conditions, the fixed point $x^\infty$ can be chosen with any global phase. -- Use of this phrase?
This theorem establishes the convergence of $[x^t]$ and, importantly, the bounds~\eqref{ineqn::Wyinf} extend to the limit $[x^\infty]$ by continuity. This strong characterization of the limit point puts us in a favorable position to verify optimality.

\subsection*{Verifying optimality}
To verify optimality of $x^\infty$, it is convenient to use a known dual certificate for the SDP relaxation. The following is a combination of Lemmas~4.3 and~4.4 in~\citep{bandeira2014tightness}.
\begin{lemma}\label{lem:certificate}
	A feasible $X$ for~\eqref{eq:SDP} is optimal if and only if
	\begin{align}
		S(X) & := \Re\{\ddiag(CX)\} - C
		\label{eq:S}
	\end{align}
	is positive semidefinite, where $\ddiag$ sets all off-diagonal entries to zero. If furthermore $\rank(S) = n-1$, then $X$ is the unique solution of~\eqref{eq:SDP}, it is of the form $X = \hat{x} \hat{x}^*$ and $\hat{x}$ is the unique global optimum of~\eqref{eq:P} up to global phase.
\end{lemma}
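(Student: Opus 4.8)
The plan is to obtain both statements from Lagrangian duality for~\eqref{eq:SDP}, viewed as a conic program over the Hermitian positive semidefinite cone with the real linear constraints $X_{kk}=1$ (note $X_{kk}$ is automatically real since $X$ is Hermitian, and $\trace(CX)$ is real since $C$ is Hermitian). First I would introduce a real multiplier $y\in\reals^n$ for the diagonal constraints and a Hermitian $S\succeq 0$ for the conic constraint, form the Lagrangian, and read off that the dual is $\min_{y\in\reals^n}\1^\top y$ subject to $\diag(y)-C\succeq 0$, under the identification $S=\diag(y)-C$. Since $X=I_n$ is strictly feasible (Slater) and the primal feasible set is compact, strong duality holds and both optima are attained.

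Next I would record the weak-duality identity: for primal feasible $X$ and dual feasible $y$ with $S=\diag(y)-C$, using $X_{kk}=1$ one gets $\trace(CX)=\1^\top y-\trace(SX)$, and $S,X\succeq 0$ gives $\trace(SX)\ge 0$ with $\trace(SX)=0\iff SX=0$. Hence $X$ is optimal for~\eqref{eq:SDP} iff there is a dual feasible $y$ with $SX=0$. The one place where the specific form of $S(X)$ enters: from $SX=0$ (hence also $XS=(SX)^*=0$), inspecting the $(k,k)$ entries and using $X_{kk}=1$ forces $y_k=(CX)_{kk}=\overline{(CX)_{kk}}$, so $y_k=\Re\{(CX)_{kk}\}$ and $S=S(X)$ exactly as in~\eqref{eq:S}. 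Conversely, for \emph{any} feasible $X$ one checks directly that $\trace(S(X)X)=\sum_k\Re\{(CX)_{kk}\}X_{kk}-\trace(CX)=\Re\{\trace(CX)\}-\trace(CX)=0$, again using $\diag(X)=\1$. Combining these: if $S(X)\succeq 0$ then $S(X)$ is dual feasible and $\trace(S(X)X)=0$ forces $S(X)X=0$, so complementary slackness holds and $X$ is optimal; if $X$ is optimal, then by strong duality and attainment there is a dual optimal pair, complementary slackness forces that dual point to be $S(X)$, and dual feasibility gives $S(X)\succeq 0$. This yields the claimed equivalence, and also shows $S(X)$ is a dual optimal certificate.

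For the rank statement, assume $\rank(S(X))=n-1$. Then $S(X)X=0$ puts the columns of $X$ in the one-dimensional space $\ker(S(X))$, so $\rank(X)\le 1$; since $\trace(X)=n>0$ we get $\rank(X)=1$, write $X=\hat x\hat x^*$, and $\diag(X)=\1$ gives $|\hat x_k|=1$, so $\hat x\in\Cn_1$ is feasible for~\eqref{eq:P} with $\hat x^*C\hat x=\trace(CX)$ equal to the SDP value, which dominates the value of~\eqref{eq:P}; hence $\hat x$ is optimal for~\eqref{eq:P}. For uniqueness, since $S(X)$ is dual optimal, complementary slackness with any other SDP optimum $X'$ gives $S(X)X'=0$, so $\col(X')\subseteq\ker(S(X))=\spann\{v\}$ for a fixed vector $v$; thus $X'=\hat x'(\hat x')^*$ with $\hat x'$ parallel to $v$, and the unit-modulus constraint forces $\hat x'=\hat x e^{i\theta}$, whence $X'=X$. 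Finally, any optimum $\hat x''$ of~\eqref{eq:P} yields the SDP-optimal matrix $\hat x''(\hat x'')^*$, which therefore equals $X$, so $\hat x''=\hat x e^{i\theta}$.

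I expect the only real work to be bookkeeping rather than a genuine obstacle: setting up the conic duality correctly in the complex-Hermitian setting (real versus Hermitian multipliers, the real trace pairing, Slater and compactness to license strong duality and attainment), and then pinning down that the forced dual multiplier is $\Re\{\ddiag(CX)\}$ and not $\ddiag(CX)$. The identity $\trace(S(X)X)=0$ for every feasible $X$ — which is exactly what makes the clean statement ``optimal $\iff S(X)\succeq 0$'' correct, and which rests on $\diag(X)=\1$ — is the small point to get right; the rest is standard complementary slackness plus a one-line rank argument.
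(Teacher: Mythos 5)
Your duality argument is correct and complete: Slater plus compactness to license strong duality and attainment, the pointwise identity $\trace(S(X)X)=0$ for every feasible $X$ (which rests on $\diag(X)=\1$) to make the ``if'' direction clean, and the observation that complementary slackness $(SX)_{kk}=0$ forces the real multiplier to be $y_k=\Re\{(CX)_{kk}\}$ for the ``only if'' direction, followed by the one-line rank and uniqueness arguments. The paper does not prove this lemma itself but cites Lemmas 4.3--4.4 of \citep{bandeira2014tightness}, which rest on exactly this KKT/complementary-slackness reasoning, so your route matches what the paper relies on.
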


To simplify notation, let $x = x^\infty$. Using the same developments as in~\citep[\S4.4]{bandeira2014tightness}, we verify that $S = S(xx^*)$ is positive semidefinite and has rank $n-1$ under condition~\eqref{ineqn::sigmaBound} on $\sigma$ and the conclusions of Theorem~\ref{thm::finCnvg}, namely, inequalities~\eqref{ineqn::Wyinf} and equation~\eqref{eqn::eigDual}. By construction, $Sx = 0$. Hence, it is sufficient to verify that $u^*Su > 0$ for all $u \in \Cn$ with $\|u\|_2 = 1$ and $u^*x = 0$:
\begin{align*}
	u^*Su & = \sum_{k \in [n]} |u_k|^2 \Re\{ (Cx)_k\overline{x_k} \} - u^*Cu \\
	& \stackrel{\eqref{eqn::eigDual}}{=} \sum_{k \in [n]} |u_k|^2 |(Cx)_k| - |u^*z|^2 - \sigma u^* W u \\
	& \geq |z^* x| - \sigma \|Wx\|_\infty - d_2^2(z, x) - \sigma \|W\|_2.
\end{align*}
(Owing to $u^*x = 0$, we used $|u^*z| = |u^*(z-xe^{i\theta})| \leq \|z-xe^{i\theta}\|_2 = d_2(z, x)$, with appropriate choice of $\theta$.) Now using $|z^*x| = n - \frac{1}{2}d_2^2(z, x)$ and assuming $\|W\|_2 \leq C_2'\sqrt{n}$ and the bounds~\eqref{ineqn::Wyinf} hold, it follows that
\begin{align*}
	u^*Su & \geq n - \frac{3}{2}d_2^2(z, x) - \sigma \|Wx\|_\infty - \sigma \|W\|_2 \\
	& \geq n - \frac{27}{8} \kappa_3^2 n - \sigma (\kappa_2 + C_2' \kappa_3) \sqrt{n \log n} - \sigma C_2'\sqrt{n}.
\end{align*}

Assume $\sigma$ satisfies inequality~\eqref{ineqn::sigmaBound}. Then, using $\kappa_1 = \kappa_3 = 1/60$ and $\kappa_2 = 4C_1' + 2C_2' \kappa_1 \ge 2C_2' \kappa_3$ as in Theorem~\ref{thm::indct},
\begin{align*}
	u^*Su \geq n \left( 1 - \frac{27}{8} \kappa_3^2 - \frac{\kappa_2 + \kappa_2/2 }{240\kappa_2} - \frac{1}{120\sqrt{2}}\right) > 0 
\end{align*}

Thus, $S$ is positive semidefinite and has rank $n-1$---which implies $xx^*$ is the unique solution of~\eqref{eq:SDP} and $x$ is the unique solution of~\eqref{eq:P} up to phase by Lemma~\ref{lem:certificate}---provided $\sigma$ satisfies~\eqref{ineqn::sigmaBound}, $\|W\|_2 \leq C_2'\sqrt{n}$ and the conclusions of Theorem~\ref{thm::finCnvg} hold. Theorem~\ref{thm:mainSDP2} follows directly; details for Theorems~\ref{thm:mainGPM2}--\ref{thm:mainbound2} are in the appendix.

\section{Conclusions and perspectives}

We proved that both semidefinite relaxation and the generalized power method are able to find the global optimum of (\ref{eq:P}) under the regime $\sigma = \mathcal{O}(\sqrt{n / \log n})$ with high probability. In other words, the maximum-likelihood estimator of phase synchronization is computationally feasible under noise level $\sigma = \mathcal{O}(\sqrt{n / \log n})$, which (nearly) matches the information-theoretic threshold, thus closing the gap in previous papers. We also derived $\ell_2$ and $\ell_\infty$ bounds on the optimum $\hat{x}$, and the $\ell_\infty$ bound improves upon previous results. The proof is based on tracking $n$ auxiliary sequences, which is a novel technique developed in this paper. As a by-product, we also proved an $\ell_\infty$ bound for the eigenvector estimator, which is of independent interest.

An interesting problem for future work is to prove (or disprove) that second-order necessary optimality conditions are sufficient for~(\ref{eq:P}). If this is true, then any algorithm that finds a second-order critical point also solves the nonconvex problem~(\ref{eq:P}). This was proved in~\cite{boumal2016nonconvexphase} for $\sigma  = \mathcal{O}(n^{1/6})$ then in~\cite{liu2016statistical} for $\sigma  = \mathcal{O}(n^{1/4})$. Numerical experiments in~\cite{boumal2016nonconvexphase} suggest that a local optimization method (namely, the Riemannian trust-region method) with random initialization finds the global optimum with $\sigma  = \tilde{\mathcal{O}}(n^{1/2})$ and random initialization. The analysis presented here does not apply directly though, because it hinges on a characterization of the limit points of GPM: a priori, this does not allow to characterize all second-order critical points.
%To understand why GPM or other local optimization algorithms succeed in solving~(\ref{eq:P}), a key step is to study second-order critical points. Our current analysis, however, is not able to handle this, because we cannot force GPM to converge to other second-order critical points other than the optimum.

A natural extension of our work is to establish similar results for synchronization over SO($d$) \citep{boumal2013MLE, wang2012LUD} and SE($d$) \citep{ozyesil2016synchronization}. The general synchronization problem is to recover group element $g_1,\ldots, g_n \in G$ from their noisy pairwise measurements $g_k^{-1} g_\ell^{}$. Our work here addresses synchronization over the group SO(2) (equivalently, the group U(1)), that is, in-plane rotations (equivalently, points on the unit complex circle). Another important group in practice is the rotation group SO(3), which is often used to describe the orientation of an object \cite{martinec2007robust, cucuringu2011eigenvector, shkolnisky2012viewing}. It is shown empirically in~\cite{boumal2015staircase} that the Riemannian trust-region method performs well. The analysis may be complicated by the fact that SO(3) is a non-commutative group.

Another important problem in practice is to handle incomplete measurement sets. In this paper, we suppose all entries of $C$ are known. A more realistic setting is that some pairs of phase differences are measured, forming edges of a graph. This appears in many applications \citep{stella2009angular, giridhar2006distributed}, and is addressed in a number of papers \citep{singer2010angular, cucuringu2012sensor}. The effect of an incomplete measurement graph on fundamental bounds is well understood as being related to the Laplacian of the graph \citep{crbsubquot}. See \cite{rosen2016certifiably} for robotics applications.

Finally, another problem of practical concern is robustness of estimation methods. Here,~(\ref{eq:P}) minimizes the sum of squared errors. However, in practice, more robust methods may be required to deal with outliers. In~\cite{wang2012LUD} for example, the authors minimize a sum of unsquared errors. A common way to solve such problems is via iterative reweighted least squares (IRLS), which is widely used in statistics~\citep{holland1977robust}. IRLS solves a weighed least squares problem in each step, where the weights depend on the current iterate. In this regard, our analysis could be a first step toward understanding robust methods with IRLS for synchronization problems.

\section*{Acknowledgements}
The authors thank Afonso Bandeira, Amelia Perry, Alexander Wein, Amit Singer and Jianqing Fan for helpful discussions.

%\clearpage
\appendix
\section{Proofs}

\subsection*{Proofs for Section \ref{sec:eigen}}

\begin{proof}[Proof of Lemma \ref{lem::conctr2}]
Let $M_u$ be the upper triangular part of $W$, i.e., $(M_u)_{ij} = W_{ij} \mathbf{1}_{i \le j}$, and $M_l = W - M_u$. Then $\Re(M_u), \Im(M_u), \Re(M_l), \Im(M_l)$ are all matrices with independent and sub-gaussian entries, whose sub-gaussian moments are bounded by an absolute constant (see \cite{Ver10} for equivalent definitions of sub-gaussian variables). We can then apply Proposition 2.4 in \cite{RudVer10} and obtain the desired concentration bound on $W$. For $W^{(m)}$, we take a union bound over choice of $m$. The bound on $\Delta W^{(m)}$ follows from those on $W$ and $W^{(m)}$. The bound on $\| w_m \|_2$ follows from that on $\| W \|_2$, since
\begin{align*}
\| W \|_2  & = \sup_{\| u \|_2 = 1} \| W u \|_2 \ge \sup_{\| u \|_2 = 1} | w_m^* u | = \| w_m \|_2.
\end{align*}
\end{proof}

\begin{proof}[Proof of Lemma \ref{lem::conctr1}]
We will prove this lemma in the case where $\mathcal{U}^{(m)}$ is a \textit{deterministic} set. The case where $\mathcal{U}^{(m)}$ is random follows easily from the deterministic case, since we can first condition on $\mathcal{U}^{(m)}$ and use independence between $\Delta W^{(m)}$ and $\mathcal{U}^{(m)}$. In the proof, notations $C_1, C_1', C_2, c_1, c_2 > 0$ denote some absolute constants. For a fixed $m \in [n]$ and $u \in \mathcal{U}^{(m)}$, 
\begin{equation}\label{eqn::DeltaW}
\| \Delta W^{(m)} u \|_2^2 = |(\Delta W^{(m)} u)_m|^2 + \sum_{k \neq m} | (\Delta W^{(m)} u)_k |^2.
\end{equation}
We will bound the two parts on the right-hand side separately. We can expand $(\Delta W^{(m)} u)_m$ into a sum: $(\Delta W^{(m)} u)_m = \sum_k W_{mk} u_k$. By assumption, $\Re(W_{mk})$ is a sub-gaussian random variable. By Hoeffding's inequality for sub-gaussian variables \citep{Ver10}, 
\begin{equation*}
\mathbb{P}\left( \left|\sum_{k=1}^n \Re(W_{mk}) \Re(u_k) \right| \ge t\right) \le \exp\left( 1 - \frac{c_1 t^2}{n} \right).
\end{equation*}
For sums of random variables of $\Re(W_{mk}) \Im(u_k)$, $\Im(W_{mk}) \Re(u_k)$ or $\Im(W_{mk}) \Im(u_k)$ over $k$, similar concentration results hold. Thus, we can set $t = C_1 \sqrt{n \log n }$, where $C_1>0$ is some large absolute constant such that $c_1 C_1^2 \ge 5$, and deduce that with probability at least $1 - 4e \cdot n^{-5}$, 
\begin{equation*} 
|(\Delta W^{(m)} u)_m| \le 4C_1\sqrt{n \log n }.
\end{equation*}
Now let us bound the second term in the right-hand side of (\ref{eqn::DeltaW}). Observe 
\begin{equation}\label{ineqn::tmp}
\sum_{k \neq m} | (\Delta W^{(m)} u)_k |^2 = \sum_{k \neq m} | W_{km} u_m |^2 = \sum_{k \neq m} \left( \Re(W_{km})^2 + \Im(W_{km})^2 \right) |u_m|^2 .
\end{equation}
Since $\Re(W_{km})$ is sub-gaussian, it follows that $\Re(W_{km})^2$ is sub-exponential with a bounded sub-exponential norm. So we can use Bernstein's inequality for sub-exponential random variables \citep{Ver10}, 
\begin{equation*}
\mathbb{P}\left( \left|\sum_{k=1}^n ( \Re(W_{km})^2 - \mathbb{E} [\Re(W_{km})]^2 ) \right| \ge t \right) \le 2 \cdot \exp\left( - c_2 \min\big\{ \frac{t^2}{4n},  \frac{t}{2} \big\} \right).
\end{equation*}
A similar concentration bound holds for $\sum_{k} \Im(W_{km})^2$. Setting $t =n$, we know that with probability at least $1 - 4 e^{-c_2n/4}$, 
\begin{equation*}
\left| \sum_{ k= 1}^n (|W_{km}|^2 - \mathbb{E}|W_{km}|^2) \right| \le 2n.
\end{equation*}
From an equivalent definition of sub-gaussian variables (see \cite{Ver10}),  we obtain $\sum_{k = 1}^n  \mathbb{E}|W_{km}|^2 \le C_2n$, so it follows that $\sum_{k \neq m} | (\Delta W^{(m)} u)_k |^2 \le (2+C_2)n M_m^2 $, where $M_m$ is a bound on $|u_m|$, uniform over $\mathcal{U}^{(m)}$. Therefore, combining the upper bounds for the two terms in (\ref{eqn::DeltaW}), we deduce that for some large absolute constant $C_1'> 0$, 
\begin{equation*}
\| \Delta W^{(m)}u \|_2\le \big( 16C_1^2 n \log n  +  (2+C_2)n  M_m^2 \big)^{1/2} \le C_1' \sqrt{n \log n} + C_1' \sqrt{n} \, M_m
\end{equation*}
holds with probability $1 -  4en^{-5} - 4e^{-c_2n/4}$. Taking a union bound over the choice of $u \in \mathcal{U}^{(m)}$ and $m$, we conclude that (\ref{ineqn::conctr1}) holds with probability $1 -  12en^{-2} - 12n^3e^{-c_2n/4}$, or equivalently $1 -  \mathcal{O}(n^{-2})$.
\end{proof}

\begin{proof}[Proof of Theorem \ref{thm::mainEig2}]
In this proof, we use $a(n) \lesssim b(n)$ to mean there exists an absolute constant $C$ such that $a(n) \le Cb(n)$. We also suppose $\sigma < c_0' \sqrt{n}$ where $c_0' < (8C_2')^{-1}$, and $C_2'$ is the absolute constant in Lemma \ref{lem::conctr2}. With probability $1 - \mathcal{O}(n^{-2})$, Lemma \ref{lem::conctr2} and Lemma \ref{lem::conctr1} hold, so we can safely use the concentration bounds. First we note the second part of Theorem \ref{thm::mainEig2} (about $\mathcal{P} \tilde{x} $) follows directly from \cite[Prop.~1]{liu2016statistical} once the first part is proved.

It is also straightforward to bound $d_2( \tilde{x} , z )$. Since $zz^*$ is a rank-$1$ matrix, the eigengap $\delta(zz^*) = \lambda_1(zz^*) - \lambda_2(zz^*)$ is simply its leading eigenvalue, which is $n$, From Lemma \ref{lem::ptb}, clearly 
%By Weyl's inequality, $\delta = \lambda_1(C) - \lambda_2(C) \ge \lambda_1(zz^*) - 2 \| \sigma W \|_2,$ so the eigengap $\delta \ge  3n/4$. From Lemma \ref{lem::ptb}, clearly 
\begin{equation*}
%d_2( \tilde{x} , z ) \lesssim  \sigma \sqrt{n}\, \| W \|_2 / n \lesssim \sigma.
d_2( \tilde{x} , z ) \le \frac{ \sqrt{2} \sigma \| W z \|_2}{ n - \sigma \| W \|_2} \le \frac{ \sqrt{2}\,\sigma C_2' n}{n - C_2' \sigma \sqrt{n}} \le \frac{8\sqrt{2}}{7} C_2' \sigma \lesssim \sigma.
\end{equation*}
This leads to the first claim of the theorem. 

Now let us consider bounding $\| W \tilde{x} \|_{\infty}$. Following the inequality (\ref{ineqn::eigKey}), we only need to bound the two parts separately in (\ref{ineqn::eigKey}). By Weyl's inequality, $\delta(C^{(m)}) = \lambda_1(C^{(m)}) - \lambda_2(C^{(m)}) \ge \lambda_1(zz^*) - 2 \sigma\| W^{(m)} \|_2$, so Lemma \ref{lem::ptb} implies
\begin{equation} \label{ineqn::d2xtilde}
%d_2( \tilde{x} ,  \tilde{x}^{(m)}) \lesssim \frac{ \sigma \|  \Delta W^{(m)} \tilde{x}^{(m)} \|_2 }{\delta - \sigma \| \Delta W^{(m)} \|_2} \lesssim  \|  \Delta W^{(m)} \tilde{x}^{(m)} \|_2 / \sqrt{n}.
d_2( \tilde{x} ,  \tilde{x}^{(m)}) \le \frac{\sqrt{2}\sigma \| \Delta W^{(m)} \tilde{x}^{(m)} \|_2}{\delta(C^{(m)}) - \sigma \| \Delta W^{(m)} \|_2} \le \frac{8\sqrt{2}\, c_0' }{5\sqrt{n}}  \|  \Delta W^{(m)} \tilde{x}^{(m)} \|_2 %\lesssim  \frac{\|  \Delta W^{(m)} \tilde{x}^{(m)} \|_2}{ \sqrt{n}}.
\end{equation}
Therefore, from (\ref{ineqn::eigKey}) we have $\| W \tilde{x} \|_{\infty} \lesssim \max_{m}( | w_m ^* \tilde{x}^{(m)}| + \| \Delta W^{(m)} \tilde{x}^{(m)} \|_2)$. Note the first term within the parenthesis is dominated by the second, since $w_m ^* \tilde{x}^{(m)}$ is exactly the $m$th coordinate of $\Delta W^{(m)} \tilde{x}^{(m)}$. Lemma \ref{lem::conctr1} implies that with probability $1- \mathcal{O}(n^{-2})$, 
\begin{equation} \label{ineqn::Wxtilde}
\| W \tilde{x} \|_{\infty} \lesssim  \max_{1 \le m \le n} \| \Delta W^{(m)} \tilde{x}^{(m)} \|_2 \lesssim \sqrt{n \log n} + \sqrt{n} \max_{1 \le m \le n} | \tilde{x}^{(m)}_m |.
\end{equation}
We claim that $\max_m  | \tilde{x}^{(m)}_m | < 8/7 $. Indeed, for any $m \in [n]$, by definition of $\tilde{x}^{(m)}$,
\begin{equation*}
\lambda_1(C^{(m)}) \tilde{x}^{(m)} = C^{(m)}\tilde{x}^{(m)} = (z^* \tilde{x}^{(m)}) z + \sigma W^{(m)} \tilde{x}^{(m)}.
\end{equation*}
Since $W^{(m)}$ has zero entries in its $m$th row, the $m$th coordinate of $W^{(m)} \tilde{x}^{(m)}$ vanishes, and we deduce
\begin{equation}\label{ineqn::Mmbound}
|\tilde{x}^{(m)}_m| = \frac{  |(z^* \tilde{x}^{(m)}) z_m| }{\lambda_1(C^{(m)})} = \frac{ |z^* \tilde{x}^{(m)}| }{\lambda_1(C^{(m)})} \le \frac{ |z^* \tilde{x}^{(m)}| }{n - \sigma \| W^{(m)} \|_2} \le \frac{ n}{n - C_2' \sigma \sqrt{n}} < 8/7,
\end{equation}
where we used Weyl's inequality $\lambda_1(C^{(m)}) \ge \lambda_1(zz ^*) - \sigma \| W^{(m)} \|_2$. This leads to $\max_m  | \tilde{x}^{(m)}_m | < 8/7 $, and therefore $\| W \tilde{x} \|_{\infty} \lesssim \sqrt{n \log n}$. This $\ell_\infty$ bound is directly related to $d_\infty( \tilde{x}, z)$. We choose the global phase of $\tilde x$ such that $\tilde x^* z = |\tilde x^* z|$, and thus $d_2(\tilde x, z)^2 = 2(n - |\tilde x^* z|)$. For any $m \in [n]$, $(C\tilde{x})_m =  (z^* \tilde{x}) z_m + \sigma (W \tilde{x})_m$, so
\begin{equation*}
| \tilde{x}_m - z_m| = \left|\frac{(C \tilde{x})_m}{\lambda_1(C)}  - z_m\right| \le \left|\frac{ z^* \tilde{x} }{\lambda_1(C)} - 1 \right| + \frac{\sigma\| W \tilde{x} \|_{\infty}}{\lambda_1(C)}.
\end{equation*}
From $d_2(\tilde{x}, z ) \lesssim \sigma$ and the identity $d_2(\tilde x, z)^2 = 2(n - |\tilde x^* z|)$, we have $ n \ge | z^* \tilde{x}| \ge n - \mathcal{O}(\sigma^2)$. By Weyl's inequality $|\lambda_1(C) - n| \le \sigma \|W \|_2 \lesssim \sigma\sqrt{n}$, and therefore,
\begin{align*}
\| \tilde{x} - z \|_{\infty} & \lesssim \frac{\sigma^2 + \sigma \sqrt{n}}{n} + \frac{\sigma \sqrt{n\log n} }{n} \lesssim \sigma \sqrt{\log n / n}.
\end{align*}
\end{proof}

\subsection*{Proofs for Section \ref{sec:phaseSyn}}

\begin{proof}[Proof of Lemma \ref{lem::ctr}]
Let us decompose $x,y$ into two parts that are orthogonal:
\begin{equation}\label{eqn:expand}
x = az + \sqrt{n}\,\alpha, \qquad y = bz + \sqrt{n}\,\beta,
\end{equation}
where $a,b \in \mathbb{C}; \alpha, \beta \in \mathbb{C}^n$, and $\alpha^* z  = \beta^* z  = 0$. Without loss of generality, we assume $a,b$ are real and $a,b \ge 0$, since we can freely choose the global phases of $x$ and $y$. Also suppose we choose $\theta \in \mathbb{R}$ such that $\| e^{i \theta} x - y \|_2$ is minimized, i.e., $\| e^{i \theta} x - y \|_2 = d_2(x,y)$. The key part of the proof is to show:
\begin{align}\label{ineqn:fix}
\frac{1}{n} |  z^* (e^{i \theta} x - y) | = |e^{i \theta}a - b| \le 6 \varepsilon \| e^{i \theta} \alpha - \beta \|_2.
\end{align}
While the above equality is easily obtained by expressing $x,y$ according to (\ref{eqn:expand}), the difficulty lies in showing the inequality. Once proved, this immediately leads to the desired inequality (\ref{ineqn::ctr}), because (using $\|e^{i \theta} \alpha - \beta\|_2 \leq d_2(x, y) / \sqrt{n}$)
\begin{align*}
\| e^{i \theta}  \mathcal{L}x - \mathcal{L} y \|_2 &\le \frac{1}{\sqrt{n}} | z^* (\etheta x - y) | + \frac{\sigma}{n} \| W (\etheta x) - Wy \|_2 \\
&\le 6 \varepsilon \sqrt{n}\, \| e^{i \theta} \alpha - \beta \|_2 + \frac{\sigma \| W \|_2}{n} \| \etheta x - y \|_2 \\
&= \left(6 \varepsilon + \frac{\sigma \| W \|_2}{n} \right) d_2(x,y) .
\end{align*}
Since $d_2(\mathcal{L}x , \mathcal{L} y) \le \| e^{i \theta}  \mathcal{L}x - \mathcal{L} y \|_2$, the proof will be complete. Note that using Cauchy--Schwarz inequality is futile as it leads to $| z^* (e^{i \theta} x - y) | \le \sqrt{n}\,d_2(x,y)$, which cannot be used to show that $\mathcal{L}$ is Lipschitz with a constant $\rho < 1$. An important intuition is that, if $x$ and $y$ are close to $z$, and $e^{i\theta}$ is close to $1$, then $e^{i\theta}x-y$ cannot be aligned too much with $z$. The rest of the proof is devoted to showing (\ref{ineqn:fix}).

Since $d_2( x , z )^2 = 2n(1 - a), d_2( y , z )^2 = 2n(1 - b)$, the conditions $d_2( x , z ) \le \varepsilon \sqrt{n}$ and $d_2( y , z ) \le \varepsilon \sqrt{n}$ are equivalent to 
\begin{equation}\label{ineqn::aalpha}
a  \ge 1-  \varepsilon^2/2, \qquad b \ge 1 - \varepsilon^2/2.
\end{equation}
Since $\varepsilon < 1/2$, we must have $a + b > 2(1 - 1/8) > 1$, and thus $| a - b | \le (a+b) |a-b| = | a^2 - b^2 |$. 
Since $\| x \|_2 = \| y \|_2 = \sqrt{n}$, we know $a^2 + \| \alpha \|_2^2 = 1$ and $b^2 + \| \beta \|_2^2 = 1$, so we have $a^2 - b^2 =  \| \beta \|_2^2 - \| \alpha \|_2^2$. Using the inequalities $\| \alpha \|_2 \le d_2(x,z)/\sqrt{n} \le  \varepsilon, \| \beta \|_2 \le d_2(y,z)/\sqrt{n} \le \varepsilon$, we derive
\begin{align}\label{ineqn:aminusb}
|a - b|  \le ( \| \alpha \|_2 + \| \beta \|_2 ) \left| \| \alpha \|_2 - \| \beta \|_2 \right| \le 2\varepsilon \| e^{i \theta} \alpha - \beta \|_2,
\end{align}
where we used the triangular inequality in the second inequality. By the choice of $\theta$, we have $\| e^{i \theta} x - y \|_2^2 \le \| x - y \|_2^2$, or equivalently
\begin{align*}
| e^{i \theta}a - b |^2 + \| e^{i \theta} \alpha - \beta \|_2^2 \le |a - b|^2 + \| \alpha - \beta \|_2^2.
\end{align*}
Since $| e^{i \theta}a - b |^2 - |a - b|^2 = 2ab(1 - \cos \theta)$, and by the triangular inequality, $ \| \alpha - \beta \|_2 - \| e^{i \theta} \alpha - \beta \|_2 \le | 1 - e^{i \theta}|  \| \alpha \|_2 \le \varepsilon | 1 - e^{i \theta}|$, we obtain
\begin{align*}
2ab(1 - \cos \theta) &\le \| \alpha - \beta \|_2^2 - \| e^{i \theta} \alpha - \beta \|_2^2 \\
&\le \varepsilon | 1 - e^{i \theta}| \left( \| \alpha - \beta \|_2 + \| e^{i \theta} \alpha - \beta \|_2 \right) \\
&\le \varepsilon | 1 - e^{i \theta}|  \left( \varepsilon | 1 - e^{i \theta}| + 2\| e^{i \theta} \alpha - \beta \|_2\right).
\end{align*}
Notice that $2(1 - \cos \theta) =  | 1 - e^{i \theta}|^2$, and thus we derive
\begin{align*}
(\varepsilon^{-1} ab - \varepsilon)  | 1 - e^{i \theta}| \le 2\| e^{i \theta} \alpha - \beta \|_2.
\end{align*}
From (\ref{ineqn::aalpha}), we have $ab \ge (1 - \varepsilon^2/2)^2 > 1 - \varepsilon^2$, so
\begin{align*}
\varepsilon^{-1} ab - \varepsilon \ge \varepsilon^{-1}(1 - \varepsilon^2) - \varepsilon = \varepsilon^{-1}(1 - 2\varepsilon^2) > (2\varepsilon)^{-1},
\end{align*}
where the last inequality is due to the condition $\varepsilon < 1/2$. Therefore, we deduce $| 1 - e^{i \theta}| \le 4 \varepsilon \| e^{i \theta} \alpha - \beta \|_2$. Combining with (\ref{ineqn:aminusb}), we obtain (recall that $a \leq 1$)
\begin{align*}
|e^{i \theta}a - b| = |e^{i \theta}a - a + a - b| \le |e^{i \theta} - 1|a + |a - b| \le 6 \varepsilon \| e^{i \theta} \alpha - \beta \|_2 .
\end{align*}
This verifies (\ref{ineqn:fix}) and completes the proof. 
\end{proof}

\begin{proof}[Proof of Lemma \ref{lem::P0}]
By the cosine formula of triangles, 
\begin{equation*}
| x - y|^2 = |x|^2 + |y|^2 - 2|x||y| \cos \theta,
\end{equation*}
where $\theta = |\Arg(x) - \Arg(y)|$ is the angle formed by $x$ and $y$. Using the AM-GM inequality (i.e., $|x|^2 + |y|^2 \geq 2|x||y|$ which follows from $| |x|-|y| |^2 \geq 0$), we have
\begin{equation}\label{ineqn::cos}
|x-y|^2 \ge 2|x||y| -  2|x||y| \cos \theta \ge 2(1-\varepsilon)^2(1 - \cos \theta).
\end{equation}
Using the same cosine formula, we also have 
\begin{equation*}
\left| \frac{x}{|x|} - \frac{y}{|y|} \right|^2 = 2 - 2\cos \theta.
\end{equation*}
This equality, together with (\ref{ineqn::cos}), leads to the desired inequality (\ref{ineqn::lemP0}). Using (\ref{ineqn::lemP0}) for all $k \in [n]$, it follows that $\| \mathcal{P}w - \mathcal{P}v \|_2 \le (1 - \varepsilon)^{-1} \| w - v \|_2$. The same is true if we replace $w$ by $w e^{i \theta}$ with an arbitrary $\theta \in \mathbb{R}$. The second inequality (\ref{ineqn::calPLip}) is obtained by minimizing $\|we^{i \theta} - v\|_2$ over $\theta$.
\end{proof}

\begin{proof}[Proof of Lemma \ref{lem::ctrT}]
We will first show that the moduli of $(\mathcal{L}x)_k$ and $(\mathcal{L}'y)_k$ are uniformly lower bounded for all $k \in [n]$, so that we can use Lemma \ref{lem::P0}. Since $d_2(z, x)^2 = 2(n - |z^*x|)$ and $d_2(z, x)^2 \leq n \varepsilon_1^2$, it follows that $2|z^*x| \geq n(2-\varepsilon_1^2)$. Thus, for any $k \in [n]$,
\begin{equation*}
\left| ( \mathcal{L} x )_k \right| = \frac{1}{n} \left|  (z^*x) z_k + \sigma (Wx)_k  \right| \ge \frac{1}{n} \left|  z^* x  \right|  - \frac{\sigma}{n} \left| (Wx)_k \right|  \ge 1 - \frac{1}{2} \varepsilon_1^2 - K_1\sigma \sqrt{\frac{\log n}{n}}.
\end{equation*}
Similarly, we have $| (\mathcal{L}' y )_k | \ge 1 - \varepsilon_2^2/2 - K_2 \sigma \sqrt{\log n/n}$. Now that $| ( \mathcal{L} x )_k |$ and $| (\mathcal{L}' y )_k |$ are uniformly lower bounded by $1 - \varepsilon$, where $\varepsilon := \max\{  \varepsilon_1^2/2  + K_1\sigma \sqrt{\log n/n}, \varepsilon_2^2/2 + K_2 \sigma \sqrt{\log n/n} \}$ is assumed smaller than $1$, we are able to invoke Lemma \ref{lem::P0}, and deduce
\begin{equation*}
d_2 (\mathcal{P}  \mathcal{L} x , \mathcal{P} \mathcal{L}' y ) \le (1 - \varepsilon)^{-1} d_2( \mathcal{L} x ,  \mathcal{L}' y).
\end{equation*}
This proves the first claim of the lemma; and the second claim follows directly from Lemma \ref{lem::ctr}. 
\end{proof}

\begin{proof}[Proof of Theorem \ref{thm::indct}]
Denote $C_1'' = 4 C_1' $. We note that $x^{0,m}$ requires special treatment in the proof. Indeed, for $t \ge 1$, $x^{t,m}$ has unit-modulus entries, whereas $x^{0,m}$ does not (in general). From Lemma \ref{lem::conctr2}, we deduce that with probability $1 - \mathcal{O}(n^{-2})$, the matrix norm bounds (\ref{ineqn::W}) hold. In Lemma \ref{lem::conctr1}, we set $\mathcal{U}^{(m)} = \{x^{t,m} \in \mathbb{C}^n: t = 0,\ldots, T\}$. By construction, $x^{t,m}$ is a function of $W^{(m)}$, and is independent of $\Delta W^{(m)}$. Besides, we have $|\mathcal{U}^{(m)}| = T+1 \le 3n^2$. This means the conditions in Lemma \ref{lem::conctr1} are satisfied, so with probability $1 - \mathcal{O}(n^{-2})$,
\begin{equation}\label{ineqn:maxt}
\max_{0 \le t \le T } | w_m ^* x^{t,m} | \le \max_{0 \le t \le T } \| \Delta W^{(m)} x^{t,m} \|_2 \le C_1'' \sqrt{ n\log n }, \quad \forall \, m \in [n].
\end{equation}
This is because for $t \ge 1$, we can use (\ref{ineqn::conctr1new}) directly; and for $t=0$, we use (\ref{ineqn::conctr1}) and derive $\| \Delta W^{(m)} x^{0,m} \|_2 \le C_1'\sqrt{ n\log n } + 2C_1'\sqrt{n} \le C_1'' \sqrt{ n\log n }$ due to $\max_{m \in [n]} M_m \le 2$ (derived in (\ref{ineqn::Mmbound})).

\underline{First we verify (\ref{ineqn::indct1})--(\ref{ineqn::indct3}) for $t = 0$}. The initializers $x^0$ and $x^{0,m}$ are simply eigenvectors of $C$ and $C^{(m)}$, and their bounds have been studied in Section \ref{sec:eigen}. Recall that $x^{0,m}$ is independent of $\sigma \Delta W^{(m)}$, so applying Lemma \ref{lem::ptb} (Davis--Kahan) yields
%\begin{equation*}
%d_2(  x^{0,m} , x^0 ) \le \frac{4 \sigma \| \Delta W^{(m)}  x^{0,m} \|_2}{\delta(C^{(m)}) - 2\sigma \| \Delta W^{(m)} \|_2} \le \frac{4C_1''\sigma\sqrt{n \log n}}{n/2} \le \frac{4\kappa_2 \sigma\sqrt{n \log n}}{n} \le \kappa_1,
%\end{equation*}
\begin{equation*}
d_2(  x^{0,m} , x^0 ) \le \frac{\sqrt{2}\, \sigma \| \Delta W^{(m)}  x^{0,m} \|_2}{\delta(C^{(m)}) - \sigma \| \Delta W^{(m)} \|_2} \le \frac{\sqrt{2}\, C_1''\sigma\sqrt{n \log n}}{n - 3 \sigma \| \Delta W^{(m)} \|_2} \le \frac{\sqrt{2}\, \kappa_2 \sigma\sqrt{n \log n}}{n/2} < \kappa_1,
\end{equation*}
Here, in the second inequality, we used the bounds $ \| \Delta W^{(m)}  x^{0,m} \|_2 \le C_1'' \sqrt{n \log n}$ by~\eqref{ineqn:maxt}, $ \|\Delta W^{(m)} \|_2 \le C_2' \sqrt{n}$, and $\delta(C^{(m)}) = \lambda_1(C^{(m)}) - \lambda_2(C^{(m)}) \ge n - 2\sigma \|\Delta W^{(m)}\|_2$ by Weyl's inequality. In the third inequality, we used $C_1'' \le \kappa_2 $ and $3\sigma \| \Delta W^{(m)} \|_2 \le 3C_2'\sigma \sqrt{n} < n/2$. The final inequality is due to the condition (\ref{ineqn::sigmaBound}). This verifies (\ref{ineqn::indct1}) for the base case $t=0$.

This immediately leads to  (\ref{ineqn::indct2}), since by the reasoning in (\ref{ineqn::eigKey}), we derive
\begin{align*}
\| W x^0 \|_{\infty} &\le \max_{1 \le m \le n} \big[ | w_m ^* x^{0,m} | + \| w_m \|_2 \cdot d_2( x^{0,m} , x^0) \big] \\
&\le C_1''  \sqrt{n \log n} + C_2' \sqrt{n} \cdot d_2( x^{0,m} , x^0 ) \\
&\le (C_1'' + 2C_2' \kappa_1)  \sqrt{n \log n} \\
&= \kappa_2\sqrt{n \log n}.
\end{align*}
Finally, applying Lemma \ref{lem::ptb} (Davis--Kahan) again, we obtain (\ref{ineqn::indct3}):
\begin{equation*}
d_2( x^0 , z ) \le \frac{ \sqrt{2} \, \sigma \| W z\|_2}{n - \sigma \| W \|_2} \le \frac{\sqrt{2}\, \sigma \cdot C_2'  n  }{n - C_2' \sqrt{n}\,\sigma } \le  \frac{n^{3/2}/120}{n/2} =\kappa_3\sqrt{n}.
\end{equation*}

Now suppose (\ref{ineqn::indct1})--(\ref{ineqn::indct3}) are true for $t$, and our goal is to derive the same inequalities for $t + 1$, until $t$ reaches $T$.

(i) \underline{Verifying (\ref{ineqn::indct1}) for $t+1$}: notice that 
\begin{equation*}
d_2( x^{t+1,m} , x^{t+1} ) \le d_2( \mathcal{T}^{(m)} x^{t,m} ,  \mathcal{T} x^{t,m} ) + d_2( \mathcal{T} x^{t,m} , \mathcal{T} x^t ).
\end{equation*}
We shall use Lemma \ref{lem::ctrT} to bound the above two terms. Before doing so, let us examine the conditions required in Lemma \ref{lem::ctrT}. In order to bound the first term, notice that 
\begin{equation*}
d_2( x^{t,m} , z ) \le d_2( x^{t,m} , x^{t} ) + d_2( x^{t} ,  z ) \le \kappa_1 + \kappa_3 \sqrt{n} \le (\kappa_1 + \kappa_3)\sqrt{n}.
\end{equation*}
Furthermore, let us check
\begin{align*}
\| W x^{t,m} \|_{\infty} & \le \mtheta \| W(\etheta x^{t,m} - x^t) \|_{\infty} + \|Wx^t\|_{\infty} \\
&\le \| W \|_2 \cdot d_2( x^{t,m} , x^t ) + \|Wx^t\|_{\infty} \\
&\le C_2' \kappa_1\sqrt{n} + \kappa_2 \sqrt{n \log n},
\end{align*}
where we used the trivial inequality $\| \cdot \|_{\infty} \le \| \cdot \|_2$.  Moreover, using~\eqref{ineqn:maxt},
\begin{align*}
\| W^{(m)} x^{t,m} \|_{\infty} &\le \| \Delta W^{(m)} x^{t,m} \|_{\infty} + \| W x^{t,m} \|_{\infty} \\
& \le  \| \Delta W^{(m)} x^{t,m} \|_2 + \| W x^{t,m} \|_{\infty} \\
& \le C_1'' \sqrt{n \log n} + C_2' \kappa_1\sqrt{n} + \kappa_2 \sqrt{n \log n} \\
& \le (C_1'' + 2C_2' \kappa_1 + \kappa_2) \sqrt{n \log n} = 2\kappa_2\sqrt{n \log n}.
\end{align*}
where we used the definition $\kappa_2 =  C_1'' + 2C_2' \kappa_1$. Applying Lemma \ref{lem::ctrT}, we obtain, by~\eqref{ineqn:maxt}
\begin{align*}
d_2( \mathcal{T}^{(m)} x^{t,m} , \mathcal{T} x^{t,m} ) &\le ( 1 - \eta_1)^{-1} d_2( \mathcal{L}^{(m)} x^{t,m} , \mathcal{L} x^{t,m} ) \\
&\le (1 - \eta_1)^{-1} \| ( \mathcal{L}^{(m)} - \mathcal{L} ) x^{t,m} \|_2 \\
&= (1 - \eta_1)^{-1} \sigma \| \Delta W^{(m)} x^{t,m} \|_2 / n \\
&\le (1 - \eta_1)^{-1} \sigma C_1'' \sqrt{\log n /n},
\end{align*}
where by condition (\ref{ineqn::sigmaBound}), 
\begin{equation*}
\eta_1 := (\kappa_1 + \kappa_3)^2/2 + 2\kappa_2 \sigma \sqrt{\log n/n} \le  \frac{1}{1800} +  \frac{1}{120} < 1/2.
\end{equation*}
This now leads to 
\begin{equation}\label{ineqn::theta1}
d_2 (\mathcal{T}^{(m)} x^{t,m} ,  \mathcal{T} x^{t,m})  \le 2\sigma C_1'' \sqrt{\log n /n} \le 2\sigma \kappa_2 \sqrt{\log n /n} \le \frac{1}{120} = \frac{\kappa_1}{2},
\end{equation}
by (\ref{ineqn::sigmaBound}) again. We can bound $d_2 ( \mathcal{T} x^{t,m} , \mathcal{T} x^t )$ similarly. From the inequalities
\begin{align*}
& d_2( x^t , z ) \le \kappa_3 \sqrt{n}, & & d_2( x^{t,m} , z ) \le \kappa_3 \sqrt{n} + \kappa_1 \le (\kappa_1 + \kappa_3)\sqrt{n}, \\
& \| W x^t \|_{\infty} \le \kappa_2 \sqrt{n \log n}, & & \| W x^{t,m} \|_{\infty} \le C_2' \kappa_1\sqrt{n} + \kappa_2 \sqrt{n \log n},
\end{align*}
and Lemma \ref{lem::ctrT}, we derive
\begin{align*}
d_2( \mathcal{T} x^{t,m} , \mathcal{T} x^t ) &\le (1-\eta_2)^{-1}(6(\kappa_1+\kappa_3) + \sigma \|W \|_2 / n) \cdot d_2( x^{t,m} , x^t ) \\
&\le 2 \cdot (0.2 + \sigma \|W \|_2 / n) \kappa_1,
\end{align*}
where, since $2C_2' \kappa_1 < C_1'' + 2C_2' \kappa_1 = \kappa_2 $,
\begin{equation*}
\eta_2 := (\kappa_1 + \kappa_3)^2/2 + (2C_2' \kappa_1 + \kappa_2)\sigma \sqrt{\log n/n} < \eta_1 \le 1/2.
\end{equation*}
Since $0.2 + \sigma \|W \|_2 / n \le 0.2 + \sigma C_2 / \sqrt{n} <1/4$ by condition (\ref{ineqn::sigmaBound}), it follows that 
\begin{equation}\label{ineqn::theta2}
d_2( \mathcal{T} x^{t,m} , \mathcal{T} x^t ) \le \kappa_1/2.
\end{equation}
Therefore, bounds (\ref{ineqn::theta1}) and (\ref{ineqn::theta2}) imply that $ d_2( x^{t+1,m} , x^{t+1} ) \le \kappa_1$. 

(ii) \underline{Verifying (\ref{ineqn::indct2}) for $t+1$}: %in the same spirit as we analyzed eigenvectors in Section \ref{sec:eigen}, we can derive the second inequality (\ref{ineqn::indct2}). For any $m \in [n]$, 
Following (\ref{eq:inftyboundsplit2}), for any $m \in [n]$, we obtain
\begin{align*}
| (Wx^{t+1})_m | &\le | w_m ^* x^{t+1,m} | +  \| w_m \|_2 \cdot d_2( x^{t+1} ,  x^{t+1,m} )  \\
%&\le C_1''\sqrt{n \log n} + \|  w_m ^* \|_2 \cdot d_2( x^{t+1} ,  x^{t+1,m} ) \\
&\le C_1''\sqrt{n \log n} + C_2'\sqrt{n}\cdot \kappa_1,
\end{align*}
where we used $d_2( x^{t+1} ,  x^{t+1,m} ) \le \kappa_1 $ (proved in (i)). Thus,
\begin{equation*}
\| Wx^{t+1} \|_{\infty} \le  (C_1'' + 2C_2' \kappa_1) \sqrt{n \log n} = \kappa_2 \sqrt{n \log n}.
\end{equation*}

(iii) \underline{Verifying (\ref{ineqn::indct3}) for $t+1$}: since $\mathcal{P} z = z$, by Lemma \ref{lem::P0}, we deduce
\begin{align}\label{ineqn::iii}
d_2 ( x^{t+1} , z ) = d_2( \mathcal{P} \mathcal{L} x^t, \mathcal{P} z ) \le (1-\varepsilon_0)^{-1} d_2( \mathcal{L} x^t , z ),
\end{align}
where $\varepsilon_0 = 1 - \min_k | (\mathcal{L} x^t)_k |$. This is only informative if $\min_k | (\mathcal{L} x^t)_k | > 0$. Observe
\begin{align*}
\mtheta  \| \etheta  \mathcal{L} x^t - z \|_{\infty} &\le \mtheta \| (\etheta   (z^* x^t )/n  - 1) z \|_{\infty} + \sigma \| W x^t \|_{\infty}/n, \\
&\le  \mtheta |  \etheta  (z^* x^t )/n  - 1 | + \kappa_2 \sigma \sqrt{\log n/n},
\end{align*}
where we used $\| W x^t \|_{\infty} \le \kappa_2 \sqrt{n\log n}$ (by the induction hypothesis). Since
\begin{equation*}
 \mtheta |  \etheta  (z^* x^t )/n  - 1 | = 1 - |z^* x^t | /n = \frac{d_2(x^t, z)^2}{2n} \le \kappa_3^2/2,
\end{equation*}
and $\kappa_2 \sigma \sqrt{\log n/n} \le 1/240 = \kappa_3/4$, and also $\kappa_3 =1/60$, it follows that 
\begin{equation*}
d_\infty ( \mathcal{L} x^t, z ) = \mtheta  \| \etheta  \mathcal{L} x^t - z \|_{\infty} \le \kappa_3/120 + \kappa_3/4 < \kappa_3/2.
\end{equation*}
This a fortiori verifies $\min_k | (\mathcal{L} x^t)_k | \ge 1 - \kappa_3/2 > 1/2$, and thus from (\ref{ineqn::iii}), 
\begin{equation}\label{ineqn::x2norm}
d_2( x^{t+1} , z )  \le 2 d_2( \mathcal{L} x^t , z ) \le 2\sqrt{n} \, d_\infty (  \mathcal{L} x^t , z ) < \kappa_3 \sqrt{n} .
\end{equation}
Here, the inequality $d_2(w,v) \le \sqrt{n}\, d_{\infty}(w,v)$ ($\forall w,v \in \mathbb{C}_1^n$) is obtained by considering $d_2(w, v) \leq \| e^{i \theta} w - v \|_2 \le \sqrt{n}  \| e^{i \theta} w - v \|_{\infty} = d_\infty(w, v)$ for the choice of $\theta$ determined by $d_\infty$. 

\underline{Finally}, we use induction and deduce that, for any $ t = 0,1\ldots, T$, the desired bounds (\ref{ineqn::indct1})--(\ref{ineqn::indct3}) hold with probability $1 - \mathcal{O}(n^{-2})$.
\end{proof}

\begin{proof}[Proof of Theorem \ref{thm::geoDcr}]
In Theorem \ref{thm::indct}, we showed that with probability $1 - \mathcal{O}(n^{-2})$, $x^t \in \mathcal{N}$ for $t \le T$. Now we can use Lemma \ref{lem::ctrT}, which yields
\begin{equation*}
d_2 (\mathcal{T} x^t , \mathcal{T} x^{t-1} ) \le \rho \cdot d_2( x^t , x^{t-1} ), \qquad \forall \, 1 \le t \le T-1,
\end{equation*}
where $\rho = (1-\varepsilon)^{-1}(6\kappa_3 + \sigma \|W\|_2 / n)$, and $\varepsilon = \kappa_3^2/2 + \kappa_2 \sigma \sqrt{\log n /n} < 1/2$. Thus, $\rho \le 2 \cdot (0.1 + \sigma C_2' /\sqrt{n}) \le 2 \cdot (0.1 + 1/120\sqrt{2})  < 1/2$. This leads to
\begin{equation*}
d_2( x^{t+1} ,  x^{t} ) \le \frac{1}{2} \, d_2( x^t , x^{t-1} ), \qquad \forall 1 \le t \le T-1,
\end{equation*}
and thus $d_2 ( x^{T} ,  x^{T-1} ) \le 2^{1 - T} d_2(x^1,x^0)  \le 2^{1 - T} (\|x^1\|_2 + \|x^0 \|_2) = 2^{2-T} \sqrt{n}$.
\end{proof}

\begin{proof}[Proof of Theorem \ref{thm::finCnvg}]
Consider the first part of the theorem. Let us use induction on $k$ to establish~(\ref{ineqn::y}). Trivially, (\ref{ineqn::y}) holds for $k = 0$. Now suppose for all $\ell \le k$, $d_2( x^{T+\ell} , x^{T+\ell-1} ) \le 2^{-\ell} d_2( x^{T} , x^{T-1})$. We now show the same for $k+1$. First,
\begin{equation*}
d_2 (x^{T+k} , x^{T-1} ) \le \sum_{\ell=0}^k d_2( x^{T+\ell} , x^{T+\ell-1} ) \le  \frac{\kappa_3}{4} \sum_{\ell=0}^k2^{-\ell} < \frac{\kappa_3}{2}. 
\end{equation*}
Similarly, $ d_2( x^{T+k-1} , x^{T-1} ) < \kappa_3/2$. By assumption, $ d_2( x^{T-1} , z ) \le \kappa_3 \sqrt{n}$, so 
\begin{equation}\label{ineqn::xT+k}
d_2( x^{T+k} , z ) \le 3\kappa_3\sqrt{n}/2, \qquad d_2 ( x^{T+k-1} , z ) \le 3\kappa_3\sqrt{n}/2.
\end{equation}
Moreover, 
\begin{align}
\| W x^{T + k} \|_{\infty} &\le \| W x^{T-1} \|_{\infty} + \mtheta \| W ( \etheta x^{T + k} - x^{T-1} ) \|_2 \notag \\
&\le \kappa_2\sqrt{n \log n} + \| W \|_2  \cdot d_2( x^{T + k} , x^{T-1} )  \notag \\
&\le (\kappa_2 + C_2'\kappa_3) \sqrt{n \log n}, \label{ineqn::WxT+k}
\end{align}
and similarly $\| W x^{T + k-1} \|_{\infty} \le  (\kappa_2 + C_2'\kappa_3) \sqrt{n \log n}$. Using Lemma \ref{lem::ctrT}, we have
\begin{align}
d_2( \mathcal{T} x^{T+k} , \mathcal{T} x^{T+k-1} ) &\le (1 - \varepsilon)^{-1}(9\kappa_3 + \sigma \|W\|_2 / n) \cdot d_2( x^{T+k} , x^{T+k-1} ) \notag\\
&\le 2\left(\frac{9}{60} + \frac{1}{120\sqrt{2}} \right) \cdot d_2( x^{T+k} , x^{T+k-1} ) \notag \\
& \le \frac{1}{2} d_2( x^{T+k} , x^{T+k-1} ), \label{ineqn::similar}
\end{align}
where $\varepsilon \le (\frac{3}{2}\kappa_3)^2/2 + (\kappa_2 + C_2'\kappa_3) \sigma \sqrt{\log n / n } < 1/2.$ By the induction hypothesis, $d_2( x^{T+k} , x^{T+k-1} ) \le 2^{-k} d_2( x^T , x^{T-1} )$, so $d_2( \mathcal{T} x^{T+k} , \mathcal{T} x^{T+k-1} ) \le 2^{-k-1} d_2( x^T , x^{T-1} )$. This completes the induction and proves (\ref{ineqn::y}).

Now consider the second part of the theorem. The existence of the limit follows if we can show the metric space $\mathbb{C}_1^n /\! \sim$ is complete. For any Cauchy sequence $\{[u^t]\}_{t=1}^\infty$ in $\mathbb{C}_1^n / \! \sim$, we can choose a subsequence $\{[u^{t_k}]\}_{k=1}^\infty$ such that $d_2([u^{t_k}] , [u^{t_{k+1}}]) \le 2^{-k}$. Now we define a sequence $\{y^k\}_{k=1}^\infty$ in $\mathbb{C}_1^n$ by fixing the phases of $[u^{t_k}]$ iteratively. Given an arbitrary representative $u^{t_1}$ in the equivalence class $[u^{t_1}]$, we set $y^1 = u^{t_1}$. Given $y^k$, we can choose a suitable $\theta \in \mathbb{R}$ such that $\| \etheta u^{t_{k+1}} - y^k \|_2$ attains $d_2(u^{t_{k+1}}, y^k)$. Then we set $y^{k+1} = \etheta u^{t_{k+1}}$. This produces a Cauchy sequence $\{y^k\}_{k=1}^{\infty}$ in $\mathbb{C}_1^n$, so it admits a limit $y^{\infty} = \lim_k y^k \in \mathbb{C}_1^n$. It follows, thus, that $[y^\infty]$ is the limit of $\{[u^{t_k}]\}_{k=1}^\infty$ in $\mathbb{C}_1^n / \! \sim$. Moreover, it is easy to see that $[y^\infty]$ is also the limit of $\{[u^t]\}_{t=1}^\infty$. 

Hence, $[x^t]$ converges to a limit $[x^\infty]$ in $\mathbb{C}_1^n / \! \sim$. The inequalities in (\ref{ineqn::Wyinf}) follows from (\ref{ineqn::xT+k}), (\ref{ineqn::WxT+k}) and the continuity argument.

%%%%%%%%%%%%%%%%%%%%%% %%%%% %%%%%%%%%%%%%%%%%%%%%%%
%% For the SIOPT version:
%Now consider the second part of the theorem. The existence of the limit follows since the metric space $\mathbb{C}_1^n /\! \sim$ is complete (which can be proved by standard arguments). Hence, $[x^t]$ converges to a limit $[x^\infty]$ in $\mathbb{C}_1^n / \! \sim$. The inequalities in (\ref{ineqn::Wyinf}) follows from (\ref{ineqn::xT+k}), (\ref{ineqn::WxT+k}) and the continuity argument.

%% NOTE:
% On page 9, there is a reference to the completeness "...see the proof of Theorem 17", which must be removed accordingly.
%%%%%%%%%%%%%%%%%%%%%% %%%%%%%%%%%%%%%%%%%%%%%%%%%%

Finally, we will show that $x^\infty$ is a fixed point of $\mathcal{T}$. Because all iterates (including $x^\infty$) satisfy (\ref{ineqn::xT+k})  and \eqref{ineqn::WxT+k}, we can use Lemma \ref{lem::ctrT} and derive 
$d_2 ( \mathcal{T} x^{\infty} , \mathcal{T}x^{T+k-1} ) \le d_2( x^{\infty} , x^{T+k-1} )  / 2$ (similarly as in (\ref{ineqn::similar})). The right-hand side is vanishing as $k\to\infty$, so
\begin{equation*}
d_2( \mathcal{T} x^{\infty} , x^{\infty} ) \le \lim_{k \to \infty} \big( d_2( \mathcal{T} x^{\infty} , x^{T+k} ) + d_2(  x^{T+k} , x^{\infty}) \big) = 0.
\end{equation*}
This implies that there exists some $\theta \in \mathbb{R}$ such that $ \etheta \mathcal{T} x^{\infty} = x^{\infty}$. Since $\min_k | (\mathcal{L}x^{\infty})_k | > 1 - \varepsilon > 0$ (as in~\eqref{ineqn::similar}), we can rewrite it as
\begin{equation*}
\etheta \mathcal{L} x^{\infty}  =n^{-1} \diag(\mu) x^{\infty},
\end{equation*}
where $\mu_k = n| (\mathcal{L} x^{\infty})_k |$. The above identity implies $e^{i \theta} (x^\infty)^* \mathcal{L} x^\infty = \sum_{k=1}^n \mu_k / n > 0$. Since $\mathcal{L}$ is Hermitian, the left-hand side must be real, so $e^{i \theta} \in \{ \pm 1\}$. Note that $e^{i \theta}$ must be $1$, since
\begin{align*}
(x^\infty)^* \mathcal{L} x^\infty &= \frac{1}{n} \left| (x^\infty)^* z \right|^2 + \frac{1}{n} \sigma  x^* W x \ge \frac{1}{n}(n - d_2(x^\infty, z)^2/2)^2 - \sigma  \| W \|_2 \\
& \ge n ( 1 - (3\kappa_3/2)^2/2 )^2 - C_2'\sigma  \sqrt{n} = n(1 - \frac{1}{3200})^2 - C_2'\sigma  \sqrt{n},
\end{align*}
which is positive under condition (\ref{ineqn::sigmaBound}). Replacing $\mathcal{L}$ with $C/n$, we finish the proof.
\end{proof}

\subsection*{Proofs for Section~\ref{sec:mainres}}
This subsection presents the proofs of the main results Theorems~\ref{thm:mainGPM2}--\ref{thm:mainbound2}. Note Theorem \ref{thm::mainEig2} is already proved in Section \ref{sec:eigen}, and Theorem \ref{thm:mainSDP2} is proved in Section~\ref{sec:phaseSyn}.

\begin{proof}[Proof of Theorem \ref{thm:mainGPM2}]
With probability $1 - \mathcal{O}(n^{-2})$, (\ref{ineqn::cauchy}) and (\ref{ineqn::y}) hold. With the trivial inequality $d_2(x^1,x^0) \le 2\sqrt{n}$, we have $d_2(x^t, x^{t-1}) \le 2^{2-t} \sqrt{n}$ for all $t$. By the triangular inequality,
\begin{equation*}
d_2(x^t, x^\infty) \le \sum_{k=t}^\infty d_2( x^{k+1}, x^k) \le \sum_{k=t}^\infty 2^{1-k} \sqrt{n} = 2^{2-t} \sqrt{n}.
\end{equation*}
This proves Theorem \ref{thm:mainGPM2}.
\end{proof}
%The theorem concerning the statistical errors of $x^\infty$ is also easy to prove.
\begin{proof}[Proof of Theorem \ref{thm:mainbound2}]
Without loss of generality, suppose $n \ge 2$ and fix the global phase of $x^\infty$ such that $z^* x^\infty = | z^* x^\infty |$. Also suppose the constant $c_0$ satisfies (\ref{ineqn::sigmaBound}). %We first strengthen the bound $d_2(x^\infty, z)$ in~(\ref{ineqn::Wyinf}) and show the following.

(i) We first show $\| x^\infty - z \|_2 \le 4C_2'\sigma$. This follows from the same argument in \citep[Lemma 4.1]{bandeira2014tightness}. Note that due to the sub-gaussian assumption in this paper, a difference is the bound on $\| W \|_2$: $\| W \|_2 \le C_2' \sqrt{n}$ by Lemma \ref{lem::conctr2} (whereas $\| W \|_2 \le 3 \sqrt{n}$ in \cite{bandeira2014tightness}).

(ii) In the last subsection ``Verifying optimality'' of Section \ref{sec:phaseSyn}, we showed, using the conclusions of Theorem \ref{thm::finCnvg}, that $x^\infty$ is the unique optimum of (\ref{eq:P}) up to phase with probability $1 - \mathcal{O}(n^{-2})$.

(iii) Next we show $\| x^\infty - z \|_{\infty} \le C \sigma \sqrt{\log n/n}$ where $C>0$ is some constant. From (i), $| z^* x^\infty | = n - \| x^\infty - z \|_2^2/2 \ge n - 8(C_2')^2 \sigma^2$. In the proof of Lemma 4.2 in \citep{bandeira2014tightness}, it is shown that
\begin{align*}
|z^*x^\infty| \| x^\infty-  z \|_\infty \le 2\sigma \| W x^\infty \|_\infty.
\end{align*}
Therefore, it follows that
\begin{align*}
\| x^\infty-  z \|_\infty &\le 2n^{-1}\sigma \| W x^\infty \|_\infty + 8(C_2')^2 n^{-1} \sigma^2 \| x^\infty - z \|_\infty \\
&\le 2n^{-1}\sigma (\kappa_2 + C_2'\kappa_3) \sqrt{n \log n} + 16(C_2')^2 n^{-1} \sigma^2,
\end{align*}
where we used (\ref{ineqn::Wyinf}) in Theorem \ref{thm::finCnvg} and a trivial bound $\| x^\infty - z \|_\infty \le 2$. Since $\sigma = \mathcal{O}(\sqrt{n / \log n})$, we conclude that there exists a constant $C$ such that $\| x^\infty - z \|_{\infty} \le C \sigma \sqrt{\log n/n}$.
\end{proof}

\footnotesize{
\bibliographystyle{plain}
\bibliography{boumal}
}

\end{document}